\newcommand{\midarrow}{\tikz \draw[->, thick] (0,0) -- +(.1,0);}
\newtheorem{num}{Nummerierung}[subsection]
\newtheorem{defin}[num]{Definition}
\newtheorem{prop}[num]{Proposition}
\newtheorem{lem}[num]{Lemma}
\newtheorem{cor}[num]{Corollary}
\newtheorem{rem}[num]{Remark}
\newtheorem{ex}[num]{Example}
\newtheorem{thm}[num]{Theorem}
\newtheorem*{thm*}{Theorem}
\newtheorem*{defin*}{Definition}
\newtheorem*{prop*}{Proposition}
\newtheorem*{lem*}{Lemma}
\newtheorem*{cor*}{Corollary}
\newtheorem*{rem*}{Remark}
\newtheorem*{ex*}{Example}
\newcommand{\N}{\mathbb{N}}
\newcommand{\R}{\mathbb{R}}
\newcommand{\C}{\mathbb{C}}
\newcommand{\T}{\mathbb{T}}
\newcommand{\cA}{\mathcal{A}}
\newcommand{\cB}{\mathcal{B}}
\newcommand{\cH}{\mathcal{H}}
\newcommand{\cK}{\mathcal{K}}
\newcommand{\cM}{\mathcal{M}}
\newcommand{\cR}{\mathcal{R}}
\newcommand{\cO}{\mathcal{O}}
\newcommand{\cS}{\mathcal{S}}
\newcommand{\g}{\mathfrak{g}}
\newcommand{\h}{\mathfrak{h}}
\newcommand{\n}{\mathfrak{n}}
\renewcommand{\a}{\mathfrak{a}}
\renewcommand{\t}{\mathfrak{t}}
\renewcommand{\k}{\mathfrak{k}}
\newcommand{\s}{\mathfrak{s}}
\renewcommand{\b}{\mathfrak{b}}
\newcommand{\z}{\mathfrak{z}}
\renewcommand{\l}{\mathfrak{l}}
\renewcommand{\sp}{\mathfrak{sp}(V,\Omega)}
\newcommand{\hsp}{\mathfrak{hsp}(V,\Omega)}
\newcommand{\spann}{\text{span}}
\newcommand{\ostrip}{\overline{\cS_\beta}}
\newcommand{\difftev}{\left.\frac{d}{dt}\right\vert_{t=0}}
\newcommand{\del}{\partial}
\newcommand{\tr}{\text{tr}}
\newcommand{\ad}{\text{ad}}
\newcommand{\tri}{\trianglelefteq}
\newcommand{\Aut}{\text{Aut}}
\newcommand{\qand}{\quad\text{and}\quad}
\newcommand{\qfor}{\quad\text{for}\quad}
\newcommand{\Wa}{\text{W}(\text{E},\sigma)}
\newcommand{\Hg}{\text{Heis}(\text{E},\sigma)}
\title{Factorial type I KMS states of Lie groups}
\author{Tobias Simon}
\begin{document}\selectlanguage{english}
\maketitle
\pagenumbering{Alph}
\renewcommand{\thepage}{C-\Roman{page}}

\pagenumbering{arabic}
\begin{abstract}
Motivated by the study of KMS conditions for C*- or W*-dynamical systems defined by covariant unitary representations of topological groups, we consider Gibbs states of a finite-dimensional Lie group $G$ and prove that these are precisely the factorial type I KMS states. For an element $X\in \textbf{L}(G)$ and an irreducible unitary representation $\rho$ of $G$ satisfying $\tr(e^{i\del\rho(X)})=1$, the corresponding Gibbs state is defined as $\varphi(g)=\tr(\rho(g)e^{i\del\rho(X)})$. We prove that under the mild assumption that $\rho$ has discrete kernel, the condition $\tr(e^{i\del\rho(X)})<~\infty$ implies that the generator $X$ is an inner point of the set $\text{comp}(\g)$ of elliptic elements in $\g$. This allows us to obtain a complete characterization of Lie algebras $\g$, representations $\rho$ with discrete kernel and generators $X$ such that $\tr(e^{i\del\rho(X)})<\infty$. 

\end{abstract}
\tableofcontents
\pagenumbering{arabic}
\section*{Notation and definitions}
\begin{itemize}
\item The group dynamical systems $(G,\alpha,\R)$ considered in this paper consist of a topological group $G$ and a group homomorphism $\alpha:\R\to\Aut(G)$ with continuous orbit maps, i.e. $\alpha$ is strongly continuous. We call $\alpha$ a one-parameter group of automorphisms.
\item A \textit{covariant unitary representation} of a group dynamical system $(G,\alpha,\R)$ is a unitary representation of $G^\flat:=G\rtimes_\alpha\R$. These are precisely the unitary representations $(\pi,\cH)$ of $G$ for which there exists a selfadjoint operator $H$ on $\cH$ which satisfies $e^{itH}\pi(g)e^{-itH}=\pi(\alpha_t(g))$, for all $t\in \R,\;g\in G$. We call $H$ an \textit{implementing operator}.
\item A \textit{state} $\varphi:G\to \C$ of a topological group $G$ is a continuous function, for which the kernel $K_\varphi:G\times G\to \C$ defined as $K_\varphi(x,y):=\varphi(x^{-1}y)$ is positive definite and $\varphi(e)=1$. Its GNS-representation is denoted by $(\pi_\varphi,\cH_\varphi,\Omega_\varphi)$.
\item If $G$ is a Lie group, we denote with $\textbf{L}(G)$ or $\g$ its Lie algebra.
\item A von Neumann algebra $\cM$ is uniquely decomposable as a direct sum of a type I, a type $\text{II}_1$, a type $\text{II}_\infty$ and a type III von Neumann algebra (cf. \cite[Def. V.1.17]{Tak02}, \cite[Thm. V.1.19]{Tak02}). It is called \textit{semi-finite} if the type III summand is trivial. This is equivalent to the existence of a semi-finite faithful normal tracial weight $\tau$ on $\cM$ (cf. \cite[Def. VII.1.1]{Tak03}, \cite[Thm. V.2.15]{Tak02}).
\item We call a state of a topological group or C*-algebra \textit{semi-finite} resp. \textit{factorial} if the von Neumann algebra generated by its GNS-representation is semi-finite resp. is a factor.
\item An \textit{elliptic} linear map endomorphism of a finite-dimensional vector space is a semi-simple linear map with purely imaginary eigenvalues.
 
\end{itemize}
\section*{Introduction}
For C*- or W*-dynamical systems $(\cM,\tau,\R)$, KMS states have been studied extensively, for instance in \cite{BR96}. We are interested in KMS states of dynamical systems defined by group dynamical systems $(G,\alpha,\R)$ and $\alpha$-covariant representations $\pi$, i.e. we consider the von Neumann algebra $\cM=\pi(G)^{\prime\prime}$ and the one-parameter group of *-automorphisms uniquely determined by $\tau_t(\pi(g))=\pi(\alpha_t(g))$. An example of such a dynamical system is the Weyl algebra $\Wa$ which is generated by the image of the Schrödinger representation of the Heisenberg group $\Hg$, where $(E,\sigma)$ is a symplectic vector space (cf. \cite[Thm. 5.2.8]{BR96}). In this example, the dynamics $\tau$ is implemented by Bogoliubov transformations corresponding to a one-parameter group of symplectic maps which is an example of the general situation described above. For such a dynamical system the $(\tau,\beta)$-KMS condition of a state $\omega:\cM\to \C$ proves that the corresponding state $\varphi:G\to \C,\;g\mapsto \omega(\pi(g))$ is an $(\alpha,\beta)$-KMS in the sense of the following definition. 
\begin{defin*}
\rm{Let $(G,\alpha,\R)$ be a group dynamical system and $\varphi:G\to \C$ be a state. For $\beta>0$, we say that $\varphi$ satisfies the $(\alpha,\beta)$-KMS condition if the functions 
\begin{align*}
F_{x,y}:\R\to \C,\quad t\mapsto \varphi(x\alpha_t(y)),\qfor x,y\in G,
\end{align*} 
extend to continuous maps on the strip $\ostrip:=\{z\in \C\;|\;0\leq \text{Im}(z)\leq \beta\}$ that are holomorphic on the interior and satisfy the reflection property $
F_{x,y}(t+i\beta)=\varphi(\alpha_t(y)x)$, for $t\in \R$.
}
\end{defin*}
If one conversely starts with an $(\alpha,\beta)$-KMS state of a topological group and considers the corresponding W*-dynamical system $(W^\ast(G),\tau,\R)$, where $W^\ast(G)$ is the group W*-algebra of $G$, one easily obtains a correspondence between $(\alpha,\beta)$-KMS states of $G$ and $(\tau,\beta)$-KMS states of $W^\ast(G)$ (cf. Subsection \ref{subselemprop}). In the case, where the group $G$ is locally compact, one has access to a C*-dynamical system given by group C*-algebra $C^\ast(G)$ defined as the enveloping C*-algebra of $L^1(G)$ and a corresponding action. This makes KMS states of locally compact groups more accessible since there exist stronger decomposition results for C*-dynamical systems and their KMS states. In particular, for locally compact type I groups, where every factor state is a type I state, every $(\alpha,\beta)$-KMS state is an integral of extremal Gibbs states with respect to some Radon probability measure on the set of extremal $(\alpha,\beta)$-KMS states (cf. Subsection \ref{subgroupc}).\\

If an $(\alpha,\beta)$-KMS state $\varphi$ of a topological group $G$ is semi-finite in the sense that $\pi_\varphi(G)^{\prime\prime}$ is a semi-finite von Neumann algebra, there exists a semi-finite faithful normal trace $\tau$ on $\pi_\varphi(G)^{\prime\prime}$. One can use the Radon-Nikodym theorem to prove that 
\begin{equation}\label{eqGibbs}
\varphi(g)= \frac{\tau(\pi(g)e^{-\beta H})}{\tau(e^{-\beta H})},\qfor g\in G,
\end{equation}
for a self-adjoint operator $H$ affiliated to $\pi_\varphi(G)^{\prime\prime}$ which satisfies $e^{itH}\pi_\varphi(g)e^{-itH}=\pi_\varphi(\alpha_t(g))$ and $\tau(e^{-\beta H})<\infty$ (cf. Subsection \ref{subssemif}). This applies in particular to the case, where $\pi_\varphi(G)^{\prime\prime}$ is a type I factor, i.e. $\pi_\varphi(G)^{\prime\prime}\cong B(\cH)$, for some Hilbert space $\cH$. Thus one obtains in the manner of Equation (\ref{eqGibbs}) a one-to-one correspondence between factorial type I $(\alpha,\beta)$-KMS states and unitary equivalence classes of irreducible $\alpha$-covariant unitary representations $\pi$ such that $\tr(e^{-\beta H})<\infty$ for the implementing operator $H$ of $\alpha$ which satisfies $e^{itH}\pi(g)e^{-itH}=\pi(\alpha_t(g))$.\\

Suppose that $G$ is a finite-dimensional Lie group and $\alpha$ is inner in the sense that there exists $X\in \textbf{L}(G)$ such that $\alpha_t$ is the conjugation by $\exp_G(tX)$. Then any unitary representation $\pi$ is $\alpha$-covariant since $e^{it\del\pi(X)}\pi(g)e^{-it\del\pi(X)}=\pi(\exp(tX)g\exp(-tX))$. This can always be achieved by considering, for not necessarily inner actions $\alpha$, the group $G^\flat:=G\rtimes_\alpha\R$, for which
\begin{align*}
(e,t)(g,s)(e,t)^{-1}=(\alpha_t(g),s),\qfor g\in G,\;s,t\in \R.
\end{align*} 
For an action by inner automorphisms, the representations relevant in the study of factorial type I KMS states are the following:
\begin{defin*}
\rm{Let $G$ be a finite dimensional Lie group and $X\in \textbf{L}(G)$. We call a unitary representation $(\pi,\cH)$ of $G$ of \textit{Gibbs type} with respect to $X$ if $e^{i\del\pi(X)}$ is of trace class and elements in $\textbf{L}(G)$, with respect to whom representations of Gibbs type exist, \textit{Gibbs elements}. We denote the set of Gibbs elements of $G$ with $\text{Gibbs}(G,\g)$  and for the universal covering group $\widetilde{G}$ of $G$ with $\text{Gibbs}(\g):=\text{Gibbs}(\widetilde{G},\g)$.}
\end{defin*}
Our main result is the following theorem, which turns out to be instrumental in characterizing representations of Gibbs type. 
\begin{thm*}
\it{Let $G$ be a finite dimensional Lie group and} \rm{$X\in \textbf{L}(G)$.} \it{If $(\pi,\cH)$ is a unitary representation of $G$ with discrete kernel such that $e^{i\del\pi(X)}$ is of trace class, then $X$ is contained in the interior of the set {\rm~$\text{comp}(\g)$} of elliptic elements in $\g$} \rm{(cf. Definition \ref{compemb})}. 
\end{thm*}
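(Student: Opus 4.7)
The strategy is to exploit the trace-cyclicity of the partition function in the presence of the one-parameter group generated by $X$. Since $\pi$ is unitary, $\partial\pi(X)$ is essentially skew-adjoint, so I write $A := -i\,\partial\pi(X)$; this $A$ is self-adjoint and the hypothesis says $e^{-A}$ is trace class. Thus $A$ has pure point spectrum bounded below with eigenvalues tending to $+\infty$; in particular $e^{-A}$ is an injective, positive, trace-class operator commuting with the whole group $\pi(\exp(\R X))$. I introduce
$$h:\g\to\C,\qquad h(Y):=\tr\bigl(e^{-A}\pi(\exp Y)\bigr).$$
Using trace cyclicity, the relation $\pi(\exp(tX))\pi(\exp(sY))\pi(\exp(-tX)) = \pi(\exp(s\,\text{Ad}(\exp(tX))Y))$, and the commutativity $[e^{-A},\pi(\exp(tX))]=0$, I obtain the fundamental invariance
$$h(W) = h(\text{Ad}(\exp(tX))W)\qforall W\in\g,\;t\in\R.$$

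I first show ellipticity ($X\in\text{comp}(\g)$) by contradiction. If $\text{ad}(X)$ is not elliptic, the real Jordan decomposition produces a nonzero $Y\in\g$ whose orbit $\{\text{Ad}(\exp(tX))Y:t\in\R\}$ is unbounded in $\g$: either via a complex eigenvalue of $\text{ad}(X)$ with nonzero real part (exponential growth) or via a Jordan block at a purely imaginary eigenvalue (polynomial growth, combined with an oscillatory factor). Combining the invariance of $h$ with the scaling $Y\to sY$ and continuity of $h$ at the origin, I can force $s\mapsto h(sY)$ to be constant on $\R$. The normalised state $\omega:=h(\,\cdot\,)/h(0)$ is then $\tr(\rho\,\cdot)$ with $\rho:=e^{-A}/\tr(e^{-A})$ an \emph{injective} density matrix; the identity $\omega(\pi(\exp(sY)))\equiv 1$ combined with the Cauchy-Schwarz inequality (applied in the eigenbasis of $A$) forces the unitary $\pi(\exp(sY))$ to fix every eigenvector of $A$, hence $\pi(\exp(sY)) = I$ for all $s$. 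Since $\ker\pi$ is discrete, this yields $\exp(sY)=e$ for all $s$, whence $Y=0$, a contradiction.

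For the interior statement, I repeat the invariance trick with $Y\in\g^X:=\ker(\text{ad}(X))$. Because $[X,Y]=0$, the groups $\pi(\exp(tX))$ and $\pi(\exp(sY))$ commute, so $\pi(\exp(sY))$ commutes with $A$ and with $e^{-A}$. The derivation above now produces the invariance $h(\text{Ad}(\exp(sY))W) = h(W)$, and running the ellipticity contradiction with $Y$ playing the role of $X$ forces $\text{ad}(Y)$ itself to be elliptic. Hence $\g^X$ is a compactly embedded subalgebra of $\g$ containing $X$. Using the decomposition $\g = \g^X\oplus[\g,X]$ (valid because $\text{ad}(X)$ is semisimple), the inverse function theorem applied to $(g,V')\mapsto \text{Ad}(g)(X+V')$ shows that every small perturbation $X+V$ is $\text{Ad}(G)$-conjugate to $X+V'$ with $V'\in\g^X$. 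Since $\text{ad}(X)$ and $\text{ad}(V')$ are commuting elliptic operators, their sum is elliptic, so $X+V\in\text{comp}(\g)$; thus $X$ is an interior point.

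The principal obstacle is the Jordan-block-at-purely-imaginary-eigenvalue case in the ellipticity step: the polynomial growth is tangled with an oscillatory factor, and one must specialize $t$ to a discrete sequence $t_n$ on which the oscillation trivializes while simultaneously rescaling $s\to s/t_n$ to extract, in the limit, a nonzero vector at the bottom of the Jordan chain on which $h$ is constant. Once this technical point is handled, the remaining ingredients---the $h$-invariance, the Cauchy-Schwarz argument upgrading constancy of $h$ to $\pi(\exp(sY))=I$, and the conjugacy argument for the interior---are essentially formal.
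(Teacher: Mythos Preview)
Your argument is correct and follows a genuinely different route from the paper's proof. The paper establishes ellipticity of every $Y\in\g^X$ through a chain of structural lemmas: it introduces the ellipticity ideal $\n_Y$ (the smallest ideal modulo which $\ad(Y)$ is elliptic), uses the conjugation representation on Hilbert--Schmidt operators together with Moore's Mautner phenomenon to prove $\n_Y$ is abelian, invokes the structure of $\Aut(K)$ for $K$ compact abelian to force $\n_Y$ central, and finally embeds a three-dimensional Heisenberg algebra and appeals to the Stone--von Neumann theorem to reach a contradiction. Your approach replaces all of this with the single trace function $h(W)=\tr(e^{-A}\pi(\exp W))$, its $\text{Ad}(\exp(\R Y))$-invariance for $Y\in\g^X$, the rescaling/limit extraction of a direction $Z$ on which $h$ is constant, and the Cauchy--Schwarz upgrade to $\pi(\exp(sZ))=I$. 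This is substantially more elementary: no Moore theorem, no compact-group automorphism structure, no Stone--von Neumann. Your direct inverse-function-theorem argument for the interior statement is also cleaner than quoting \cite[Lemma VII.1.5(b), VII.1.7(c)]{NE00}, and in fact your argument never needs the assertion that $\g^X$ is compactly embedded as a subalgebra---only that each $V'\in\g^X$ commutes with $X$ and is individually elliptic, so that $\ad(X+V')$ is a sum of commuting semisimple operators with imaginary spectrum. What the paper's approach buys is a conceptual picture (the ellipticity ideal, the Heisenberg obstruction) that connects to the broader structure theory in \cite{NE00}; what yours buys is self-containment and a proof that works uniformly without first reducing to irreducible subrepresentations. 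One small expository point: the sentence ``Hence $\g^X$ is a compactly embedded subalgebra'' is stronger than what you have proved (each element elliptic) and stronger than what you use; you may want to drop it or note that the IFT step only requires the weaker statement.
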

Irreducible unitary representations $\pi$ of finite-dimensional Lie groups, for which there exists  $X\in \text{comp}(\g)^\circ$ such that the spectrum of $i\del\pi(X)$ is bounded from above, where extensively studied in \cite{NE00}. Using these results, we obtain the following for irreducible representations of Gibbs type $\pi$ with discrete kernel, the Lie algebras $\g$, and Gibbs elements $X$:
\begin{itemize}
\item[(i)] $\g:=\textbf{L}(G)$ is an admissible Lie algebra (cf. Definition \ref{defadm}),
\item[(ii)] there exists a compactly embedded Cartan subalgebra $\t\subseteq \g$ containing $X$ (cf. Definition \ref{compemb}),
\item[(iii)] there exists an admissible positive system of roots $\Delta^+\subseteq \Delta(\g_\C,\t_\C)$ (cf. Definition \ref{rootadm}) such that $X$ is contained in the interior of the cone 
\begin{align*}
C_\text{max}(\Delta_p^+):=\{Y\in \t\;|\;i\alpha(X_0)\geq 0\;\text{for all}\;\alpha\in \Delta_p^+\},
\end{align*}
where $\Delta_p^+$ is the set of non-compact positive roots (cf. Definition \ref{defroots}),
\item[(iv)] $\pi$ is a unitary highest weight representation with respect to the root system $\Delta^+$ (cf. Definition \ref{unithigh}).
\end{itemize}
Conversely, if one starts with an admissible Lie group $G$ and a unitary highest weight representation $\pi$ of $G$ with respect to an admissible positive system $\Delta^+$, then \cite[Thm. IX.4.6]{NE00} yields that $e^{i\del\pi(X)}$ is of trace class for all $X\in C_\text{max}(\Delta_p^+)^\circ$. Hence we have characterized the Lie algebras, generators, and representations in question by geometric data. \cite[Thm. A.V.1]{NE00} gives a complete classification of simple admissible Lie algebras, which are also called hermitian. For these, the admissible root systems are known and the cones $C_\text{max}(\Delta_p^+)$ can be explicitly computed.\\

The condition of semi-boundedness of the representations relevant to this paper is generalized in \cite{Ni22} and shown to be a property of every KMS state of Lie groups. We also mention \cite{LT18} as an example of trace class conditions arising in conformal field theory.\\

\textbf{Structure of this paper}:\\
This paper is divided into two sections. What motivated this paper is mostly contained in Section~\ref{section1}. In this section, most of what we do is take results already known for KMS states of C*- or W*-dynamical systems and connect these with the group dynamical picture. This leads us to unitary representations of Gibbs type which we characterize in Section \ref{secGibbs}. This characterization can be seen as a standalone result which is why we include Section \ref{section1} afterward as a motivation.\\

\textbf{Acknowledgements}\\
This  research was supported by DFG-grant NE 413/10-1. I thank my supervisor Karl-Hermann Neeb for the many helpful corrections and Ricardo Correa da Silva for his advice on the theory of non-commutative integration.

\section{Geometric and representation theoretic aspects of Gibbs states}\label{secGibbs}
In this section, we study Gibbs elements and their corresponding unitary representations of Gibbs type. To do this, we make use of concepts introduced in Subsection \ref{subsadm} and Subsection \ref{subsrepr} in the appendix. These subsections are meant as a quick introduction to concepts discussed in depth in the monograph \cite{NE00}. 
\begin{defin*}
\rm{We call a subalgebra $\a\subseteq\g$ \it{compactly embedded}} \rm{if $\overline{\langle e^{\ad_\g(\a)}\rangle}\subseteq \text{GL}(\g)$ is compact. We call an element $x\in \g$ \textit{elliptic} if the subalgebra $\R\cdot x\subseteq \g$ is compactly embedded and denote with $\text{comp}(\g)$ the subset of elliptic elements in $\g$.}
\end{defin*}
Under the assumption that $\pi$ is an irreducible unitary representation of Gibbs type, we prove that the generator $X$ is contained in the interior of the set of elliptic elements $\text{comp}(\g)$ in $\g$. The condition $\text{comp}(\g)^\circ \neq \emptyset$ is equivalent to the existence of a compactly embedded Cartan subalgebra $\t\subseteq \g$, and in this case $\text{comp}(\g)=\text{Inn}(\g).\t$ (cf. \cite[Thm. VII.1.8]{NE00}). In Remark \ref{type}, we discuss how, for a compactly embedded Cartan subalgebra $\t\subseteq \g$, one obtains a root system 
\begin{align*}
\Delta:=\Delta(\g_\C,\t_\C)\subseteq i\t^\ast\quad\text{such that}\quad \g_\C=\t_\C\oplus \bigoplus_{\alpha\in \Delta}\g_\C^\alpha
\end{align*}
is the simultaneous eigenspace decomposition with respect to $\t$. In view of $\text{comp}(\g)=\text{Inn}(\g).\t$ and the $\text{Inn}(\g)$-invariance of the set of Gibbs elements, it suffices to characterize the elements of $\text{Gibbs}(\g)\cap\t$. We show that this characterization is possible by considering data associated to the root system $\Delta$, namely we prove that for a unitary representation $\pi$ of $G$ with discrete kernel and an element $X\in \g$ satisfying $e^{i\del\pi(X)}$, one has the following:
\begin{itemize}
\item[(i)] $\g$ is an admissible Lie algebra, i.e. it contains an invariant generating closed convex subset containing no affine line.
\item[(ii)] There exists a compactly embedded Cartan subalgebra $\t\subseteq \g$ containing $X$.
\item[(iii)] There exists an admissible positive system of roots $\Delta^+\subseteq \Delta(\g_\C,\t_\C)$ (cf. Definition \ref{rootadm}) such that $X$ is contained in the interior of the cone $C_\text{max}(\Delta_p^+)$ (cf. Definition \ref{cone}).
\item[(iv)] $\pi$ is a unitary highest weight representation with respect to the root system $\Delta^+$ (cf. Definition \ref{unithigh}).
\end{itemize}
In \cite[Thm. IX.4.6]{NE00} it is shown that if $\pi$ is a unitary highest weight representation with respect to an admissible positive system $\Delta^+$ and $X\in C_\text{max}(\Delta_p^+)^\circ$, then $e^{i\del\pi(X)}$ is of trace class. If one now drops the assumption of the discrete kernel, one obtains a complete characterization of Gibbs representations of simply connected Lie groups and Gibbs elements by considering faithful unitary highest weight representations with respect to admissible positive systems of admissible quotients.
\subsection{Ellipticity of Gibbs elements}
The goal of this subsection is to prove that, if $\pi$ is a unitary representation of a finite-dimensional Lie group $G$ with discrete kernel and $X\in \g:=\textbf{L}(G)$ such that $e^{i\del\pi(X)}$ is of trace class, then $X\in\text{comp}(\g)^\circ$. This is equivalent to $\g^X:=\text{ker}(\ad(X))\subseteq \g$ being compactly embedded by \cite[Lemma VII.1.7(c)]{NE00}.\\

The first crucial observation is that in this case, the eigenspaces of $e^{i\del\pi(X)}$ are all finite-dimensional and invariant under $G^X:=\langle \exp(\g^X)\rangle$. Since $\pi$ has discrete kernel, this implies that $\g^X$ is a compact Lie algebra. In order to prove that it actually is compactly embedded, we consider the smallest ideal $\n_Y\tri\g$, for a fixed $Y\in \g$, such that 
\begin{align*}
\ad_{\g/\n_Y}:\g/\n_Y\to \g/\n_Y,\; x+\n_Y\mapsto[Y,x]+\n_Y
\end{align*}
is an elliptic endomorphism. For $Y\in \g^X$, we prove that $\n_Y=\{0\}$ holds, which is equivalent to $Y$ being elliptic. To do this, we proceed by first showing that $\n_Y$ is abelian and then use this to prove that $\n_Y$ is an at most one-dimensional central ideal. 
\begin{lem}\label{nyab}
Let $(\pi,\cH)$ be a unitary representation of $G$ with discrete kernel such that $e^{i\del\pi(X)}$ is of trace class. The ellipticity ideal $\n_Y$ is abelian, for every $Y\in \g^X$.
\end{lem}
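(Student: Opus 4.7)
The strategy is to argue by contradiction. Assuming $\n_Y$ is non-abelian, I would exploit $[X,Y]=0$ to transport the algebraic structure of $\n_Y$ to the Hilbert space, where the trace-class hypothesis and the discreteness of $\ker\pi$ together supply enough rigidity to produce a contradiction. I begin by giving a concrete description of $\n_Y$. Decompose $\g_\C$ into the generalized $\ad(Y)_\C$-eigenspaces $V_\lambda$ and write $\ad(Y) = S_e + S_h + N$ for its elliptic, hyperbolic and nilpotent Jordan parts. For $\ad(Y)$ to act elliptically on $\g/\n_Y$, the ideal $\n_Y$ must contain the image of $S_h + N$, namely all $V_\lambda$ with $\mathrm{Re}(\lambda)\neq 0$ together with the non-semisimple pieces $(\ad(Y)-\lambda)V_\lambda$ for $\lambda\in i\R$. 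Since $[X,Y]=0$, $\ad(X)$ commutes with $\ad(Y)$ and preserves each $V_\lambda$; combined with $[V_\lambda,V_\mu]\subseteq V_{\lambda+\mu}$, one verifies that this explicit subspace is already a $\g$-ideal, and therefore equals $\n_Y$.

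Next, $[X,Y]=0$ forces $e^{i\del\pi(X)}$ to commute with every $\pi(\exp(tY))$, so each finite-dimensional eigenspace $E_\mu$ of $e^{i\del\pi(X)}$ is $\del\pi(Y)$-invariant. On each $E_\mu$ the restricted skew-adjoint operator $\del\pi(Y)|_{E_\mu}$ is then diagonalizable with purely imaginary spectrum, and $\pi(\exp(tY))$ acts on $E_\mu$ through a finite-dimensional unitary representation of $\R$.

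Now suppose $[\n_Y,\n_Y]\neq 0$ and pick generalized weight vectors $Z_1\in V_{\lambda_1}$ and $Z_2\in V_{\lambda_2}$ in $\n_Y$ with $0\neq [Z_1,Z_2]\in V_{\lambda_1+\lambda_2}$. Since $\ker\pi$ is discrete, $\del\pi$ is injective on $\g$, so the non-vanishing of $[Z_1,Z_2]$ persists after applying $\del\pi$. The bracket identities $[\del\pi(Y),\del\pi(Z_j)]=\lambda_j\del\pi(Z_j)+(\text{nilpotent correction from } N)$ force each $\del\pi(Z_j)$ to shift the spectrum of $\del\pi(Y)$ by $\lambda_j$. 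Confronting this with the purely imaginary, finite spectrum of $\del\pi(Y)|_{E_\mu}$ on every $E_\mu$, and with the non-triviality of $[\del\pi(Z_1),\del\pi(Z_2)]$, should contradict the description of $\n_Y$ obtained in the first step.

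The main obstacle is the analytic control of the unbounded skew-adjoint operators $\del\pi(Z_j)$ and of their commutators with $\del\pi(Y)$ on a common dense invariant core, together with the passage between the generalized eigenspaces of $\ad(Y)$ on $\g$ and the honest spectral subspaces of $\del\pi(Y)$ on $\cH$. A subtlety worth highlighting is that elements of $\n_Y$ need not lie in $\g^X$, so $\pi(\exp(sZ_j))$ in general does not preserve the eigenspaces $E_\mu$, and the link between the algebraic weight of $Z_j$ and its operator-theoretic spectral action has to be drawn indirectly.
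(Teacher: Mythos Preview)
Your proposal has two substantive gaps.

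First, the explicit description of $\n_Y$ is not correct. The subspace $W=\sum_{\mathrm{Re}\,\lambda\neq 0}V_\lambda+\sum_{\lambda\in i\R}(\ad(Y)-\lambda)V_\lambda$ is the smallest $\ad(Y)$-invariant subspace whose quotient action is elliptic, but it need not be an ideal of $\g$. Take $\g=\mathfrak{sl}(2,\R)$ with $Y$ the split semisimple element; then $\ad(Y)$ is diagonalizable with real eigenvalues $0,\pm 2$, so your $W$ is the sum of the two root spaces, which is not even a subalgebra, whereas $\n_Y=\g$ by simplicity. The relation $[V_\lambda,V_\mu]\subseteq V_{\lambda+\mu}$ does not save you once $\lambda+\mu$ lands back in $i\R$.

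Second, and more fundamentally, the contradiction you sketch never closes. You correctly note that $\exp(\R Y)$ preserves each finite-dimensional eigenspace $E_\mu$ of $e^{i\del\pi(X)}$, but you have no mechanism to transfer this invariance to $N_Y=\langle\exp(\n_Y)\rangle$; you yourself flag that elements of $\n_Y$ need not lie in $\g^X$. Without $N_Y$-invariance of the $E_\mu$, the finite spectral picture of $\del\pi(Y)|_{E_\mu}$ imposes no constraint on $[\n_Y,\n_Y]$, and your spectral-shifting heuristic would at best force $\ad(Y)$ to have spectrum in $i\R$---it says nothing about whether brackets inside $\n_Y$ vanish. The paper supplies exactly this missing step via Moore's Mautner-type theorem: passing to the conjugation representation $\rho(g)A=\pi(g)A\pi(g)^\ast$ on $B_2(\cH)$, the finite-rank eigenprojections of $e^{i\del\pi(X)}$ are $\rho(\exp(\R Y))$-fixed vectors, and Moore's theorem then forces them to be $\rho(N_Y)$-fixed. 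Hence $\pi|_{N_Y}$ splits into finite-dimensional pieces with discrete kernel, so $\n_Y$ is a compact Lie algebra. The abelianness follows by a Levi argument: the compact semisimple ideal $\a:=[\n_Y,\n_Y]\trianglelefteq\g$ splits off as a direct summand $\g=\a\oplus\b$; since $\g/\b\cong\a$ is compact, $\ad(Y)$ acts elliptically on it, so minimality gives $\n_Y\subseteq\b$ and hence $\a=\a\cap\b=\{0\}$.
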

\begin{proof}
Consider the unitary representation of $G$ on the Hilbert-Schmidt operators $B_2(\cH)$ given by
\begin{align*}
\rho:G\to U(B_2(\cH)),\quad \rho(g)(A)=\pi(g)A\pi(g)^\ast.
\end{align*}
The eigenspace projections of $e^{i\del\pi(X)}$ have finite rank and thus are Hilbert-Schmidt operators. Since $\pi(\exp(\R Y))$ leaves the eigenspaces of $e^{i\del\pi(X)}$ invariant, the one-parameter groups $\rho(\exp(\R Y))$ fix the corresponding projections. Now Moore's Theorem \cite[Thm. 1.1]{Mo80} implies that the normal subgroup $N_Y:=\langle\exp(\n_Y)\rangle\tri G$ also fixes the projections onto the eigenspaces, which is equivalent to $\pi(N_Y)$ leaving the eigenspaces invariant. Hence the representation $\pi\vert_{N_Y}$ decomposes into finite-dimensional subrepresentations and has discrete kernel. Therefore the finite-dimensional unitary representations of $\n_Y$ separate the points which implies that $\n_Y$ is a compact Lie algebra. As a compact Lie algebra, $\n_Y$ is in particular reductive and decomposes as $\n_Y=\mathfrak{z}_{\n_Y}\oplus[\n_Y,\n_Y]$. We define $\a:=[\n_Y,\n_Y]\tri\g$  and note that $\a$ is a semi-simple compact ideal. From \cite[Cor. 5.6.14]{HN12}, one obtains that $\a$ is contained in a Levi complement $\s$ and thus for the ideal $\a\tri\s$ there exists a complementary ideal $\a^\bot\tri \s$ such that $\s=\a\oplus\a^\bot$ since $\s$ is semi-simple. It is now easy to see that $\b:=\s^\bot\oplus\mathfrak{rad}(\g)$ is an ideal in $\g$ and satisfies $\g=\a\oplus\b$. This shows that $\g/\b\cong \a$ is compact and thus $\ad_{\g/\b}(Y)$ is elliptic which implies $\n_Y\subseteq \b$ as $\n_Y$ is the smallest ideal with this property. Then $\a\subseteq\n_Y$ and $\a\cap\b=\{0\}$ imply $[\n_Y,\n_Y]=\a=\{0\}$ and thus the assertion follows. 
\end{proof}
The group $K:=\overline{\pi(N_Y)}\subseteq U(\cH)$ on $\cH$ leaves the finite dimensional eigenspaces of $e^{i\del\pi(X)}$ invariant, which implies that that $K$ is compact and Lemma \ref{nyab} shows that $K$ is abelian. Since $N_Y\tri G$ is a normal subgroup and $\pi(N_Y)\subset K$ is dense, it follows that $\pi(g)K\pi(g)^{-1}\subseteq K$ and thus
\begin{align*}
\Phi:G\to \text{Aut}(K),\quad \Phi(g)(k)= \pi(g)k\pi(g)^{-1}
\end{align*} 
is a group homomorphism. Next we consider on $\text{Aut}(K)$ a suitable topology\footnote{Consider the subspace topology $\mathcal{O}_1$ with respect to the topology of uniform convergence on $C(K,K)$, which is the coarsest topology containing the sets $
U(V_0,f_0):=\{f\in C(K,K)\;|\;f(k)\in V_0f_0(k)\;\text{for all}\; k\in K\},$ for $f_0\in C(K,K)$ and identity neighbourhoods $V_0$ in $K$. Let $\mathcal{O}_2$ be the unique topology on $\Aut(K)$ such that $\iota:\text{Aut}(K)\to (\text{Aut}(K), \mathcal{O}_1),\; f\mapsto f^{-1}$ becomes a homeomorphism. If we now define $\mathcal{O}:=\mathcal{O}_1\vee \mathcal{O}_2$, then $(\text{Aut}(K),\mathcal{O})$ defines a topological group (cf. \cite[p. 257]{HM06}).} for $\Phi$ to be continuous. We will use this topology to cite results that imply that a continuous action of a connected group acting on a compact abelian group by group homomorphisms is trivial.
\begin{lem}\label{Kcomm}
Let $(\pi,\cH)$ be a unitary representation of $G$ with discrete kernel such that $e^{i\del\pi(X)}$ is of trace class. For $Y\in \g^X$ and $N_Y:=\langle \exp(\n_Y)\rangle$, the group $K:=\overline{\pi(N_Y)}$ is compact, abelian, and commutes with the identity component of $G$. In particular, $\n_Y$ is central in $\g$.
\end{lem}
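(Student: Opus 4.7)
The plan is to build on Lemma \ref{nyab} in three stages: first establish compactness and abelianness of $K$, then use normality of $N_Y$ in $G$ to see that $G_0$ centralizes $K$, and finally differentiate to pass from the group level to the Lie algebra. For the first stage, the eigenspace decomposition already used in the proof of Lemma \ref{nyab} gives $\cH=\bigoplus_i\cH_i$ with $\dim\cH_i<\infty$ and $\pi(N_Y)\cH_i\subseteq\cH_i$, so $K$ embeds as a closed subgroup of the compact product group $\prod_iU(\cH_i)$ and is therefore compact. Since $\n_Y$ is abelian by Lemma \ref{nyab}, the Baker--Campbell--Hausdorff formula shows that $N_Y=\exp(\n_Y)$ is an abelian subgroup of $G$, hence so is its closure $K=\overline{\pi(N_Y)}$.

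For the central step, the normality $N_Y\trianglelefteq G$ gives $\pi(g)K\pi(g)^{-1}=K$ for every $g\in G$, so conjugation defines a group homomorphism $\Phi\colon G\to\Aut(K)$. A routine verification using strong continuity of $\pi$ and compactness of $K$ shows that $\Phi$ is continuous with respect to the topology $\mathcal{O}$ from the footnote. At this point I would appeal to the classical fact that for a compact abelian group $K$ the identity component of $\Aut(K)$ is trivial: via Pontryagin duality $\Aut(K)$ is identified with $\Aut(\hat{K})$ acting on the discrete dual group $\hat{K}$, which is totally disconnected (cf.\ \cite{HM06}). Hence $\Phi(G_0)=\{\mathrm{id}_K\}$, i.e.\ $G_0$ commutes with $K$, and in particular with $\pi(N_Y)$.

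For the final assertion, take $Y'\in\n_Y$ and $g\in G_0$. Then $\pi(g)$ commutes with $\pi(\exp tY')\in K$, and differentiating at $t=0$ yields $\partial\pi(\text{Ad}(g)Y')=\partial\pi(Y')$. The discrete kernel assumption forces $\partial\pi$ to be injective on $\g$, so $\text{Ad}(g)Y'=Y'$ for all $g\in G_0$; differentiating $\text{Ad}$ once more gives $[\g,Y']=0$, i.e.\ $Y'$ is central. The main obstacle I anticipate is the continuity check for $\Phi$ in the non-standard topology $\mathcal{O}$ on $\Aut(K)$, together with pinning down the precise statement in \cite{HM06} guaranteeing triviality of the identity component of $\Aut(K)$ for $K$ compact abelian.
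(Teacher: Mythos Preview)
Your proposal is correct and follows essentially the same route as the paper: compactness of $K$ via the finite-dimensional eigenspace decomposition, abelianness from Lemma~\ref{nyab}, the conjugation homomorphism $\Phi\colon G\to\Aut(K)$ from normality of $N_Y$, and triviality of $\Aut(K)_0$ for $K$ compact abelian to force $\Phi(G_0)=\{\mathrm{id}\}$. The paper handles your two anticipated obstacles exactly as you would expect: continuity of $\Phi$ with respect to $\mathcal{O}$ reduces (since $\Phi$ is a group homomorphism) to continuity in $\mathcal{O}_1$, which follows from the exponential law \cite[Thm.~1.7.29]{GN} applied to $\Phi^\wedge\colon G\times K\to K$; and the triviality of $\Aut(K)_0$ is obtained from \cite[Thm.~9.82]{HM06}, which identifies $\Aut(K)_0$ with $\mathrm{Inn}(K)$ for any compact group $K$ --- your Pontryagin duality sketch is an alternative proof of the same fact in the abelian case. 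Your final differentiation argument to pass from $\pi(g)$ commuting with $\pi(\exp tY')$ to $[\g,\n_Y]=0$ is spelled out more explicitly than in the paper, which simply asserts ``therefore $\n_Y$ is central''; your version is a clean way to fill that gap.
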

\begin{proof}
\cite[Thm. 9.82]{HM06} implies that the identity component of $\text{Aut}(K)$ with respect to the topology introduced in the footnote above coincides with the inner automorphisms of $K$. Since $K$ is abelian, it is trivial. As $\Phi$ is a group homomorphism, we only need to show that it is continuous with respect to $\cO_1$ to prove its continuity with respect to $\cO$. By the exponential law for spaces of continuous functions (cf.  \cite[Thm. 1.7.29]{GN}), this follows from the continuity of
\begin{align*}
\Phi^\wedge:G\times K\to K,\quad (g,k)\mapsto \Phi(g,k)=\pi(g)k\pi(g)^{-1}.
\end{align*}
This implies that the identity component of $G$ acts trivially on $K$ and therefore $\n_Y$ is central. The representation $\pi$ was assumed to be irreducible, so that $\pi(N_Y)\subseteq \pi(G)^\prime =\C\mathds{1}$ and the discreteness of the kernel of $\pi$ thus implies that $\n_Y\tri\g$ is a central ideal. With the same argument, one obtains that $\z(\g)$ is at most one-dimensional.
\end{proof}
It remains to prove that $\n_Y$ cannot be one-dimensional. In order to prove this, we will assume the opposite $\{0\}\neq \n_Y\subseteq \z:=\z(\g)$. Then we are in the context of the following lemma which will allow us to use some results of the representation theory of the three-dimensional Heisenberg group to obtain a contradiction.
\begin{lem}\label{heis}
Let $\g$ be a real finite-dimensional Lie algebra with center $\z$ and suppose that $y+\z\in \g/\z$ is elliptic but $y\in \g$ is not. Then there exists $x\in \g$ such that $[x,y]=z\in \z\setminus \{0\}$. In particular \rm{$\h:=\text{span}_\R\{x,y,z\}$} \it{is isomorphic to the three dimensional Heisenberg algebra $\h_3(\R)$.}
\end{lem}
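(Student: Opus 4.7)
The plan is to exploit the discrepancy between $\ad(y)$ on $\g$ and $\ad_{\g/\z}(\bar y)$ on $\g/\z$. Since $\ad(y)$ annihilates $\z$, picking a vector-space complement $V$ of $\z$ in $\g$ lets me write the matrix of $\ad(y)$ with respect to $\g=\z\oplus V$ in the block form $\bigl(\begin{smallmatrix} 0 & B \\ 0 & A\end{smallmatrix}\bigr)$, where for $v\in V$ I decompose $[y,v]=B(v)+A(v)$ with $A(v)\in V$ and $B(v)\in\z$. Under the identification $V\cong\g/\z$, the map $A$ corresponds to $\ad_{\g/\z}(\bar y)$ and is therefore elliptic, while $B:V\to\z$ is the ``defect''.

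The decisive step, which I expect to be the main technical point, is an ellipticity criterion: $\ad(y)$ is elliptic if and only if $B|_{\ker A}=0$. The eigenvalues of $\ad(y)$ are clearly those of $A$ together with $0$ (of multiplicity $\dim\z$) and hence all purely imaginary, so only semi-simplicity is at stake. For every nonzero eigenvalue $\lambda$ of $A$ with eigenvector $v\in V$, a direct computation shows that $v+\lambda^{-1}B(v)$ is a genuine $\ad(y)$-eigenvector, so no extra Jordan blocks appear there. The only possible defect lies at the eigenvalue $0$: the true $0$-eigenspace of $\ad(y)$ is $\z\oplus(\ker A\cap\ker B)$, whereas the generalised $0$-eigenspace, using semi-simplicity of $A$, is $\z\oplus\ker A$. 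Equality of these — hence ellipticity of $\ad(y)$ — is precisely the condition $\ker A\subseteq\ker B$.

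Since $y$ is by hypothesis not elliptic, the criterion produces a vector $v\in\ker A\subseteq V$ with $B(v)\neq 0$. Setting $x:=v$ and $z:=[x,y]=-B(v)-A(v)=-B(v)\in\z\setminus\{0\}$ yields the desired element. To finish, $\{x,y,z\}$ needs to span a copy of $\h_3(\R)$: the brackets $[x,z]=[y,z]=0$ are automatic from $z\in\z$, so only linear independence remains. Any nontrivial relation rearranges into one of the three cases in which one of $x$, $y$, $z$ is a linear combination of the other two; in each case bilinearity of the bracket together with the centrality of $z$ forces $[x,y]=0$, contradicting $z\neq 0$. Once the ellipticity criterion is established, this closing verification is immediate.
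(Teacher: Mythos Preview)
Your argument is correct and takes a genuinely different route from the paper's proof. The paper works in $\g_\C$ and invokes the Jordan decomposition $\ad(y)=\ad(y)_s+\ad(y)_n$: since $\ad_{\g/\z}(\bar y)$ is semisimple, the nilpotent part $\ad(y)_n$ must map $\g_\C$ into $\z_\C$, and it vanishes on all generalised eigenspaces $\g_\C^\lambda$ with $\lambda\neq 0$ because $\g_\C^\lambda\cap\z_\C=\{0\}$ there; non-ellipticity of $y$ then forces $\ad(y)_n\neq 0$, producing $x\in\g_\C^0$ with $[y,x]=\ad(y)_n(x)\in\z_\C\setminus\{0\}$, and one passes to a real or imaginary part. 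Your approach replaces this by the concrete block-triangular form $\bigl(\begin{smallmatrix}0&B\\0&A\end{smallmatrix}\bigr)$ relative to a chosen complement $V$ of $\z$ and an explicit ellipticity criterion $\ker A\subseteq\ker B$, obtained by an eigenspace count. The two arguments are really the same observation---the only obstruction to semisimplicity sits at the zero eigenvalue and is measured by how $\ad(y)$ maps $\ker A$ into $\z$---but yours is more elementary in that it avoids the Jordan decomposition machinery, at the price of the explicit eigenvector construction $v\mapsto v+\lambda^{-1}B(v)$. One small point to tidy: for the nonzero eigenvalues $\lambda\in i\R\setminus\{0\}$ the eigenvectors $v$ live in $V_\C$, not in $V$, so that step should be phrased in the complexification; this does not affect the conclusion, since the criterion $\ker A\subseteq\ker B$ and the vector $v\in\ker A$ you finally extract are real. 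Your closing verification that $\{x,y,z\}$ is linearly independent is a welcome addition; the paper leaves this implicit.
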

\begin{proof}
We consider $\ad(y)$ as an endomorphism of $\g_\C$ with Jordan decomposition given by $\ad(y)=\ad(y)_s+\ad(y)_n.$ Since $\g_\C$ decomposes into the $\ad(y)_n$-invariant eigenspaces $\g_\C^\lambda$ of $\ad(y)_s$, for $\lambda\in \C$, and $\z_\C\subseteq \g_\C^0$, it follows from the fact that $y+\z$ is elliptic that 
\begin{align*}
\ad(y)_n\vert_{\g_\C^\lambda}\equiv 0,\quad\text{for }\quad\lambda\neq 0,\quad\text{and}\quad\ad(y)_n(\g_\C^0)\subseteq \z_\C.
\end{align*}
If $\ad(y)_n$ were zero, then $\ad(y)=\ad(y)_s$ would be elliptic since $\z_\C\subseteq \g_\C^0$. This proves that there exists $x\in \g_\C^0$ such that $\ad(y)(x)=\ad(y)_n(x)\in \z_\C\setminus\{0\}$. If we write $x=x_0+ix_1\in \g_\C$, then $[x,y]\in \z_\C\setminus \{0\}$ implies that either $[x_0,y]\in \z\setminus\{0\}$ or $[x_1,y]\in\z\setminus\{0\}$ hold. This proves the assertion.
\end{proof}

\begin{rem}\label{heisrep}
\rm{Consider the three-dimensional Heisenberg group defined as follows:
\begin{align*}
\text{Heis}(\R^2):=\R^2\times \R\quad\text{with multiplication}\quad (a,b,c)(x,y,z):=(a+x,b+y,c+z+ay),
\end{align*}
for $a,b,c,x,y,z\in \R$. For irreducible unitary representations of $\text{Heis}(\R^2)$ the center $Z(\text{Heis}(\R^2))\cong \R$ acts by scalars and thus, if the center acts non-trivially, defines a central character. The Stone-von-Neumann Theorem (cf. \cite[Thm. 10.2.1]{DE09}) implies that this central character $\lambda:\R\to \T,\;x\mapsto e^{i\lambda x}$ uniquely determines the unitary representation and that the unitary representation is unitarily equivalent to the representation on $L^2(\R)$ defined by
\begin{align*}
\pi_\lambda(a,b,c)f(x):=e^{i\lambda(bx+c)}f(x+a)\quad\text{for}\quad a,b,c,x\in \R,\;f\in L^2(\R).
\end{align*}
Consider the automorphism $\Phi:\text{Heis}(\R^2)\to \text{Heis}(\R^2),\;(a,b,c)\mapsto(b,-a,c-ab)$, which fixes the center pointwise. We note that by the Stone-von-Neumann theorem $\pi_\lambda\cong \pi_\lambda\circ \Phi$ and $\Phi(0,t,0)=(t,0,0)$ holds, for $t\in \R$. This proves that the unitary one-parameter groups defined by 
\begin{align*}
\pi_\lambda(t,0,0)f(x)=f(x+t)\quad\text{and}\quad \pi_\lambda(0,t,0)f(x)=e^{it\lambda x}f(x),\quad\text{for}\quad x,t\in \R,\;f\in L^2(\R)
\end{align*}
are conjugate via $U(\cH)$. In particular,$\pi_\lambda(\R,0,0)$ and $\pi_\lambda(0,\R,0)$ do not have compact closure in $U(\cH)$ since the identical representation of both groups on $L^2(\R)$ clearly does not decompose into irreducible subrepresentations and the groups are conjugate.
}
\end{rem}
We now come to our first main result:
\begin{thm}\label{semisimple}
Let $(\pi,\cH)$ be a unitary representation of $G$ with discrete kernel such that $e^{i\del\pi(X)}$ is of trace class. Then $\pi$ is a direct sum of irreducible representations and $X$ is contained in \rm{$\text{comp}(\g)^\circ$}.
\end{thm}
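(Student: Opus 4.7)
The plan is to treat the two assertions separately: I would show the ellipticity $X\in\text{comp}(\g)^\circ$ by applying the preceding lemmas directly to $\pi$ (which has discrete kernel), and deduce the direct-sum decomposition afterwards by a Zorn's-lemma extraction argument, exploiting that $e^{i\del\pi(X)}$ is a nonzero compact operator in $\pi(G)''$ with finite-dimensional spectral projections. By \cite[Lemma VII.1.7(c)]{NE00}, showing $X\in\text{comp}(\g)^\circ$ reduces to showing that every $Y\in\g^X$ is elliptic, so I fix such a $Y$ and argue by contradiction from $\n_Y\neq\{0\}$.

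Lemma \ref{nyab} gives $\n_Y$ abelian and Lemma \ref{Kcomm} gives $\n_Y\subseteq\z(\g)$. Since $Y+\z(\g)$ is elliptic in $\g/\z(\g)$ (as a further quotient of the elliptic $Y+\n_Y\in\g/\n_Y$) while $Y$ is not, Lemma \ref{heis} produces $x\in\g$ and $z\in\z(\g)\setminus\{0\}$ with $[x,Y]=z$, spanning a Heisenberg subalgebra $\h\cong\h_3(\R)$ which integrates to a subgroup $H\subseteq G$. Now let $A_X:=i\del\pi(X)$ and $A_z:=i\del\pi(z)$; both are self-adjoint, $A_X$ has pure point spectrum bounded above with finite-dimensional eigenspaces $E_\mu$, and $[A_X,A_z]=0$ since $[X,z]=0$. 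Discreteness of $\ker\pi$ forces $\ker\del\pi=\{0\}$, so $A_z\neq 0$; because $A_z$ preserves each $E_\mu$ and acts there as a finite-dimensional self-adjoint operator, $A_z$ itself has pure point spectrum and admits a nonzero eigenvalue $\lambda$ with eigenspace $V_\lambda$. Centrality of $z$ makes $V_\lambda$ a $\pi(G)$-invariant subspace on which the center of $H$ acts by the nontrivial character $e^{it\lambda}$; the Stone--von Neumann theorem then identifies $\pi|_H|_{V_\lambda}$ as a multiple of the Schrödinger representation $\pi_\lambda$, so Remark \ref{heisrep} shows that $\pi(\exp(\R Y))|_{V_\lambda}$ has non-compact closure in $U(V_\lambda)$. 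On the other hand, since $Y$ commutes with both $X$ and $z$, the unitaries $\pi(\exp(tY))$ preserve every finite-dimensional joint eigenspace $E_\mu\cap V_\lambda$, so $\pi(\exp(\R Y))|_{V_\lambda}$ is contained in the compact group $\prod_\mu U(E_\mu\cap V_\lambda)$ (product of compact unitary groups in the product topology, which embeds continuously into $U(V_\lambda)$ with the strong operator topology), hence has compact closure. This contradiction yields $X\in\text{comp}(\g)^\circ$.

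For the direct-sum assertion, I would apply Zorn's lemma to the collection of orthogonal families of irreducible $\pi(G)$-invariant subspaces of $\cH$, taking a maximal such family with direct sum $\cH_0\subseteq\cH$. If $\cH_0^\perp\neq\{0\}$, then $e^{i\del\pi(X)}|_{\cH_0^\perp}$ is still a nonzero trace-class operator (its spectral projections lie in $\pi(G)''$ and commute with the $\pi(G)'$-projection onto $\cH_0^\perp$), so it has a nonzero finite-dimensional eigenspace $E\subseteq\cH_0^\perp$. A second Zorn argument, applied to the $\pi(G)$-invariant subspaces $\cK\subseteq\cH_0^\perp$ with $\cK\cap E\neq\{0\}$ (lower bounds for chains exist since decreasing intersections of subspaces of the finite-dimensional $E$ stabilize), yields a minimal such $\cK_0$, which is necessarily irreducible: any nontrivial decomposition $\cK_0=\cL_1\oplus\cL_2$ would split the nonzero $\cK_0\cap E=(\cL_1\cap E)\oplus(\cL_2\cap E)$, and at least one summand $\cL_i\cap E\neq\{0\}$ would contradict minimality. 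This contradicts maximality of the original family, so $\cH_0=\cH$. The main obstacle I anticipate is the Stone--von Neumann step in the ellipticity argument, where one must verify that the non-compact-closure property of Remark \ref{heisrep} survives when the multiplicity of $\pi_\lambda$ in $\pi|_H|_{V_\lambda}$ is arbitrary (this ultimately holds because $\mathds{1}\otimes U$ has compact closure if and only if $U$ does, but the amplification argument needs to be spelled out).
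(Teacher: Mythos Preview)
Your proof is correct, but it proceeds in a genuinely different order and with different tools than the paper's.

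The paper first proves the direct-sum decomposition: since $e^{i\partial\pi(X)}\in\pi(G)''$ is trace class, the commutant $\pi(G)'$ restricts to each finite-dimensional eigenspace as a finite-dimensional $C^*$-algebra, hence a sum of matrix algebras; collecting the resulting minimal projections yields the decomposition immediately. Only \emph{afterwards} does the paper argue ellipticity, and it does so on an irreducible summand, where Schur's lemma forces the central element $z$ to act by a single scalar $i\lambda$, so that the Stone--von~Neumann argument applies to the whole space with no further reduction.

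You reverse the order and avoid both the $C^*$-structure argument and the reduction to irreducibles. For ellipticity you work directly with the possibly reducible $\pi$: since $\pi$ is not assumed irreducible, $z$ need not act by a scalar, and you compensate by spectral-decomposing $A_z=i\partial\pi(z)$ (which inherits pure point spectrum from $A_X$) and restricting to an eigenspace $V_\lambda$ with $\lambda\neq 0$. On $V_\lambda$ the Heisenberg center acts by the character $e^{it\lambda}$ just as in the irreducible case, and your compactness contradiction goes through because $\pi(\exp(\R Y))$ preserves the finite-dimensional joint eigenspaces $E_\mu\cap V_\lambda$. For the direct-sum statement you give a hands-on Zorn extraction, using finite-dimensionality of the eigenspaces $E$ to guarantee that descending chains of invariant subspaces meeting $E$ nontrivially have a lower bound.

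What each route buys: the paper's approach is shorter and more structural---once one invokes the matrix-algebra decomposition of $\pi(G)'\vert_E$, both assertions fall out with almost no further work, and the Stone--von~Neumann step needs no auxiliary spectral decomposition. Your approach is more elementary (no $C^*$-structure theory) and makes the ellipticity argument self-contained, independent of the decomposition into irreducibles; it also makes transparent that only the Heisenberg subgroup and the commutation relations $[Y,X]=[Y,z]=0$ are really used, not the full invariance of $V_\lambda$ under $\pi(G)$. The multiplicity issue you flag at the end is shared by both proofs (the paper also obtains only ``a multiple of $\pi_\lambda$'') and is handled by the observation you give: $U\otimes\mathds{1}$ has compact closure iff $U$ does.
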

\begin{proof}
Since $e^{i\del\pi(X)}\in \pi(G)^{\prime\prime}$ is a positive operator of trace class, it follows that $\pi(G)^{\prime}$ leaves the finite-dimensional eigenspaces invariant. As finite dimensional C*-algebras are isomorphic to a direct sum of matrix algebras, this implies that there exists a maximal family of pairwise disjoint minimal projections in $\pi(G)^\prime$ summing to the idenitfy in the strong operator topology. Therefore $\pi$ is a direct sum of irreducible representations $\pi_j$ with discrete kernel satisfying $\tr(e^{i\del\pi_j(X)})<\infty$. Suppose now that $\pi$ is irreducible and let $Y\in \g^X$. It follows from Lemma \ref{Kcomm} that $\n_Y\subseteq \z(\g)$ and thus $Y$ is either elliptic or $0\neq Y+\z(\g)$ is elliptic in $\g/\z(\g)$. In the latter case, Lemma \ref{heis} implies that there exists $x_0\in \g$ such that $[Y,x_0]=z\in \z(\g)\setminus\{0\}$ and thus the subalgebra $\h:=\text{span}_\R\{Y,x_0,z\}$ is isomorphic to the three-dimensional Heisenberg algebra $\textbf{L}(\text{Heis}(\R^2))$ via the isomorphism defined by
\begin{align*}
Y\mapsto (1,0,0),\;x_0\mapsto (0,1,0), z\mapsto (0,0,1).
\end{align*} 
The simply connected Heisenberg group $\text{Heis}(\R^2)$ is the universal covering group of $\langle\exp(\h)\rangle$ and thus the representation $\pi\vert_{\langle \exp(\h)\rangle}$ lifts to a representation $\rho$ of $\text{Heis}(\R^2)$. As $\pi$ is irreducible, $\z(\g)$ acts by scalars, i.e. there exists $\lambda \in \R $ such that 
\begin{align*}
\del\rho(0,0,1)=\del\pi(z)=i\lambda \mathds{1}.
\end{align*} 
One has $\lambda\neq 0$, because $\del\pi(z)=0$ would imply $
\n_Y\subseteq \z(\g)\subseteq \text{ker}(\del\pi)=\{0\}$. The Stone-von-Neumann Theorem (cf. \cite[Thm. X.3.1]{NE00}) implies that $\rho$ is a multiple of $\pi_\lambda$, where $\pi_\lambda$ is the unique irreducible unitary representation of the Heisenberg group with central character $\lambda:\R\to \T,\;x\mapsto e^{i\lambda x}$. It follows from Remark \ref{heisrep}, that the closure of 
\begin{align*}
\rho(\exp(\R(1,0,0)))=\pi(\exp(\R Y))
\end{align*} 
is not compact. This is a contradiction since a closed subgroup of $U(\cH)$ leaving the finite-dimensional eigenspaces of $e^{i\del\pi(X)}$ invariant is compact. Hence $\n_Y=\{0\}$ follows, which proves that $Y$ is elliptic. As $Y\in\g^X$ was arbitrary, we have shown that  $\g^X$ is contained in  $\text{comp}(\g)$, which together with \cite[Lemma VII.1.5(b)]{NE00} yields that $\g^X$ is compactly embedded. From \cite[Lemma VII.1.7(c)]{NE00}, one obtains that $X\in\text{comp}(\g)^\circ$.
\end{proof}
\subsection{Characterization of representations of Gibbs type}\label{subscharGibbs}
The goal of this subsection is to summarize results from \cite{NE00} to prove that the irreducible unitary representation $\rho$ of Gibbs type with discrete kernel coincide with faithful unitary highest weight representations of a finite-dimensional connected Lie group $G$. The Lie algebras, for which such representations exist, are precisely the admissible Lie algebras, i.e. those containing a generating invariant closed convex subset containing no affine line. The Lie algebraic and representation theoretic concepts used in this subsection are introduced in Subsection \ref{subsadm} and Subsection \ref{subsrepr}. \\

The convex momentum set $I_\pi\subseteq \g^\ast$ of a unitary representation $(\pi,\cH)$ of a finite-dimensional Lie group $G$ (cf. Definition \ref{convmom}) connects convex geometry and representation theory. Its relevance for our study of representations of Gibbs type can be seen from the following result (cf. \cite[Lemma X.I.6]{NE00}):
\begin{align*}
B(I_\pi):=\{Y\in \g\;|\; \langle I_\pi,Y\rangle >-\infty\}=\{Y\in \g\;|\;\sup \text{spec}(i\del\pi(Y))<\infty\}.
\end{align*}
If $(\pi,\cH)$ is an irreducible unitary representation of $G$ with discrete kernel for which there exists $X\in \g:=\textbf{L}(G)$ such that $e^{i\del\pi(X)}$ is of trace class, then the spectrum of $i\del\pi(X)$ is bounded from above and Theorem \ref{semisimple} implies $X\in B(I_\pi)\cap \text{comp}(\g)^\circ$. Therefore \cite[Thm. X.3.8]{NE00} is applicable and we obtain the following corollary:
\begin{cor}\label{corad}
Let $G$ be a finite-dimensional Lie group and $(\pi,\cH)$ be an irreducible unitary representation of $G$ with discrete kernel for which there exists $X\in \g:=\textbf{L}(G)$ such that $e^{i\del\pi(X)}$ is of trace class. Then there exist a compactly embedded Cartan subalgebra $\t\subseteq\g$ containing $X$ and a positive system of roots $\Delta^+\subseteq \Delta(\g_\C,\t_\C)$, which is adapted \rm{(cf. Definition \ref{defroots})}, \it{such that $\pi$ is a unitary highest weight representation} \rm{(cf. Definition \ref{unithigh})} \it{with respect to $\Delta^+$ and $X$ is contained in the interior of the cone}\rm{
\begin{align*}
C_\text{max}:=C_\text{max}(\Delta_p^+)=\{X\in \t\;|\; i\alpha(X)\geq 0\;\text{for all}\;\alpha\in \Delta_p^+\}.
\end{align*}}
\end{cor}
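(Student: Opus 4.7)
My plan is to reduce the corollary to \cite[Thm.~X.3.8]{NE00}, which the excerpt has already flagged as the applicable structural result. That theorem takes as input an irreducible unitary representation with discrete kernel together with an element $X \in B(I_\pi) \cap \text{comp}(\g)^\circ$, and produces exactly the four outputs claimed here: a compactly embedded Cartan subalgebra $\t$ containing $X$, an adapted positive system $\Delta^+ \subseteq \Delta(\g_\C,\t_\C)$, the unitary highest-weight structure of $\pi$ with respect to $\Delta^+$, and the membership $X \in C_\text{max}(\Delta_p^+)^\circ$. Thus the whole proof amounts to verifying these two defining inclusions for $X$.

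The ellipticity half, $X \in \text{comp}(\g)^\circ$, is literally the content of Theorem~\ref{semisimple} and requires no further argument. For the momentum-set half, I would appeal to the identity $B(I_\pi) = \{Y \in \g \mid \sup\text{spec}(i\del\pi(Y)) < \infty\}$ recalled immediately before the corollary. Since $\pi$ is unitary, the operator $A := i\del\pi(X)$ is self-adjoint, and the continuous functional calculus identifies $e^{i\del\pi(X)}$ with $e^A$, whose spectrum is the closure of $\{e^\lambda \mid \lambda \in \text{spec}(A)\}$. Trace class of $e^A$ forces, in particular, boundedness of $e^A$, which rules out $\sup\text{spec}(A) = +\infty$ and delivers the required spectral bound; hence $X \in B(I_\pi)$.

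With both inclusions in hand, a single invocation of \cite[Thm.~X.3.8]{NE00} produces every piece of data asserted by the corollary in one stroke. I do not expect any real obstacle: the substantive analysis sits entirely inside Theorem~\ref{semisimple}, and the present statement is genuinely a packaging corollary. The only bookkeeping item is to confirm that the notion of ``adapted'' positive system produced by the cited theorem matches Definition~\ref{defroots} and that its maximal cone is precisely $C_\text{max}(\Delta_p^+)$; these are conventional, not mathematical, points, and can be dispatched by quoting the relevant definitions in the appendix.
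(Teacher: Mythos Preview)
Your proposal is correct and matches the paper's own argument essentially verbatim: the paper also observes that the trace-class condition bounds $\sup\text{spec}(i\del\pi(X))$ (so $X\in B(I_\pi)$), invokes Theorem~\ref{semisimple} for $X\in\text{comp}(\g)^\circ$, and then applies \cite[Thm.~X.3.8]{NE00} directly. There is nothing to add.
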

For an element $X\in \g$, the condition that there exists an  adapted positive system $\Delta^+$ such that $X\in C_\text{max}(\Delta_p^+)^\circ$ is not sufficient for $X$ to be a Gibbs element. To ensure this, we need further restrictions on $\Delta^+$. The Gelfand-Rai'kov Theorem for Unitary Highest Weight Representations \cite[Thm. X.4.8]{NE00} states sufficient conditions for a positive root system $\Delta^+$ of a simply connected admissible Lie group such that unitary highest weight representations exist. In this theorem, one considers the convex cone
\begin{align*}
C_\text{min}(\Delta_p^+):=\text{cone}(\{i[Z_\alpha,Z_\alpha^\ast]\;|\;Z_\alpha\in\g_\C^\alpha,\;\alpha\in\Delta_p^+\}\subseteq \t,
\end{align*}
where $\ast:\g_\C\to\g_\C,\;x+iy\mapsto -x+iy$ denotes the involution on $\g_\C$ discussed in Remark \ref{type}. The condition stated in the following definition then ensures that the unitary highest weight representations separate the points.
\begin{defin}\label{rootadm}
\rm{Let $\g$ be a Lie algebra and $\t\subseteq\g$ be a compactly embedded Cartan subalgebra. We call an adapted positive system of roots $\Delta^+\subseteq \Delta(\g_\C,\t_\C)$ such that $C_\text{min}(\Delta_p^+)$ is pointed and contained in $C_\text{max}(\Delta_p^+)$ \textit{admissible}.
}
\end{defin}
\begin{rem}
\rm{The term admissible for a positive system of roots is not used in \cite{NE00}. We introduce it to state the conditions for positive systems of roots arising in our context more concisely.
}
\end{rem}
The condition of admissibility of a positive system arises quite naturally in the theory of unitary highest weight representations. In the Gelfand-Raik'kov theorem, it is stated as a sufficient condition for the existence of unitary highest weight representation of admissible Lie algebras. The following theorem, which is stated implicitly in \cite{NE00}, shows that for faithful unitary highest weight representations the condition is also necessary:
\begin{thm}\label{admuhwstatement}\rm{(\cite[Thm. IX.2.17]{NE00})}\\
\it{Let $G$ be a finite dimensional Lie group with Lie algebra $\g$ and $(\pi,\cH)$ be an irreducible unitary highest representation of $G$ with respect to a positive system $\Delta^+$ and suppose $\pi$ has discrete kernel. Then $\g$ is admissible and $\Delta^+$ is an admissible positive system.}
\end{thm}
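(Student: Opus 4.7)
The plan is to read off both admissibility statements from the representation-theoretic and convex-geometric data supplied by $\pi$. Since $\pi$ is a unitary highest weight representation with respect to $\Delta^+$, there exist by definition a compactly embedded Cartan subalgebra $\t\subseteq\g$, a highest weight vector $v_\lambda$ with weight $\lambda\in i\t^\ast$, and the adaptedness of $\Delta^+$ is already built in. What remains is to show (i) $C_\text{max}(\Delta_p^+)\supseteq C_\text{min}(\Delta_p^+)$, (ii) $C_\text{min}(\Delta_p^+)$ is pointed, and (iii) $\g$ is admissible.

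For (i), fix $\alpha\in\Delta_p^+$ and $Z_\alpha\in\g_\C^\alpha$ and apply $\del\pi$ to the $\mathfrak{sl}_2$-triple $(Z_\alpha,Z_\alpha^\ast,[Z_\alpha,Z_\alpha^\ast])$. Unitarity yields $\del\pi(Z_\alpha^\ast)=\del\pi(Z_\alpha)^\ast$, and the highest weight property $\del\pi(Z_\alpha)v_\lambda=0$ combined with the commutator identity produces $\|\del\pi(Z_\alpha^\ast)v_\lambda\|^2=\lambda([Z_\alpha,Z_\alpha^\ast])\cdot\|v_\lambda\|^2\geq 0$. Replaying this computation with $v_\lambda$ replaced by a weight vector $v_\mu$ of weight $\mu\in\lambda-\N_0\Delta^+$, and then letting $\mu$ run along rays in the direction of $-\beta$ for a non-compact positive root $\beta\in\Delta_p^+$, extracts exactly the sign condition $i\beta(i[Z_\alpha,Z_\alpha^\ast])\geq 0$. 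Summing over $\alpha$ and $Z_\alpha$ gives the inclusion.

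For (ii), assume $Y\in C_\text{min}\cap -C_\text{min}$. By (i) we have $Y\in C_\text{max}\cap -C_\text{max}$, so $\alpha(Y)=0$ for every $\alpha\in\Delta_p^+$; consequently $Y$ centralizes each non-compact root space, and since $Y\in\t$ it also commutes with $\t_\C$ and (by the root decomposition and the Weyl group action on $\Delta_k^+$) with the full compact part, hence $Y\in\z(\g)$. Irreducibility then forces $\del\pi(Y)=ic\cdot\mathds{1}$ for some $c\in\R$; for $c=0$ the one-parameter subgroup $\exp(\R Y)$ lies in $\ker\pi$, contradicting discreteness unless $Y=0$, while for $c\neq 0$ the representation-theoretic constraints imposed by writing $Y$ as a non-negative combination of elements $i[Z_\alpha,Z_\alpha^\ast]$ force $c=0$ anyway. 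Thus $C_\text{min}(\Delta_p^+)$ is pointed and, together with (i), $\Delta^+$ is admissible.

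For (iii), the closed convex hull $C:=\overline{\text{conv}}(\text{Inn}(\g)\cdot C_\text{min}(\Delta_p^+))$ is an $\text{Inn}(\g)$-invariant closed convex cone in $\g$ with non-empty interior (because $C_\text{min}$ has non-empty interior in $\t$ and $\text{Inn}(\g)\cdot\t^\circ$ spans $\g$), and the admissibility of $\Delta^+$ lifts to pointedness of $C$ via standard cone-extension results (cf.\ \cite[Ch. VII]{NE00}). Hence $\g$ carries a pointed generating invariant closed convex cone and is admissible. The principal obstacle is step (i): translating global unitarity of $\pi$ into algebraic positivity of the pairings $i\beta(i[Z_\alpha,Z_\alpha^\ast])$ for arbitrary pairs of non-compact positive roots is the technical heart of the Harish-Chandra positivity criterion for unitarizability of highest weight modules, and making the asymptotic argument rigorous across weight rays requires care.
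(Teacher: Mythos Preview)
Your approach is quite different from the paper's. The paper does not attempt any direct positivity computations on weight vectors; instead it identifies $\cH^\infty$ with the module $L(\lambda,\Delta^+)$ and invokes \cite[Thm.~IX.2.17]{NE00} as a black box to obtain that $\g$ is admissible, $\Delta^+$ is adapted, and $-i\lambda$ lies in the interior of $C_{\min}(\Delta_p^+)^\star$. Pointedness of $C_{\min}$ then follows from convex duality (\cite[Prop.~V.1.15]{NE00}), and the inclusion $C_{\min}\subseteq C_{\max}$ is obtained from \cite[Prop.~VIII.1.11]{NE00} after checking, via \cite[Thm.~VIII.1.19]{NE00} and \cite[Thm.~X.4.1]{NE00}, that the coadjoint orbit $\mathcal O_{-i\lambda}$ is closed, has convex hull without affine lines, and spans $\g^\ast$. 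The discrete-kernel hypothesis enters precisely at this last point, through $(I_\pi)^\bot=\ker(d\pi)=\{0\}$.

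Your direct route has several genuine gaps. First, adaptedness of $\Delta^+$ is \emph{not} part of Definition~\ref{unithigh} in this paper; it must be derived. Second, the weight-ray argument in (i) is not rigorous: for a weight vector $v_\mu$ one only gets $\mu([Z_\alpha,Z_\alpha^\ast])\|v_\mu\|^2=\|\del\pi(Z_\alpha^\ast)v_\mu\|^2-\|\del\pi(Z_\alpha)v_\mu\|^2$, with no sign control unless $Z_\alpha$ annihilates $v_\mu$, and there is no guarantee that weights $\lambda-n\beta$ occur for arbitrarily large $n$ along every $\beta\in\Delta_p^+$. Note also that the discrete-kernel hypothesis plays no role in your (i), whereas in the paper it is exactly what makes $C_{\min}\subseteq C_{\max}$ go through; this is a strong sign that the argument as sketched cannot close. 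Third, step (ii) contains an error: $Y\in C_{\max}\cap(-C_{\max})$ gives $\alpha(Y)=0$ for all $\alpha\in\Delta_p$, but the compact roots need not vanish on $Y$, so $Y\in\z(\g)$ does not follow; the appeal to the Weyl group of $\Delta_k$ does not repair this. Finally, in (iii) you assume $C_{\min}$ has non-empty interior in $\t$, which you have not established.
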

\begin{proof}
We consider the space of smooth vectors $\cH^\infty$ which is a faithful, unitary highest weight module of $\g_\C$. It follows from \cite[Prop. IX.1.14]{NE00}(iii) that, for the highest weight $\lambda\in i\t^\ast$ of $\pi$ the $\g_\C$-module $\cH^\infty$ is isomorphic to $L(\lambda,\Delta^+)$, which is introduced there. Using \cite[Thm. IX.2.17]{NE00}, which is applicable to $L(\lambda,\Delta^+)$, one obtains that $\g$ is admissible, $\Delta^+$ is adapted and $-i\lambda\in \t^\ast$ is contained in the interior of 
\begin{align*}
C_\text{min}(\Delta_p^+)^\star:=\{f\in \t^\ast \;|\; f(C_\text{min}(\Delta_p^+))\subseteq [0,\infty)\}\subseteq B(C_\text{min}(\Delta_p^+)).
\end{align*}
Then \cite[Prop. V.1.15]{NE00} implies that the convex cone $C_\text{min}(\Delta_p^+)$ is pointed since $B(C_\text{min}(\Delta_p^+))$ has inner points. To prove that $C_\text{min}(\Delta_p^+)\subseteq C_\text{max}(\Delta_p^+)$ holds, we apply \cite[Prop. VIII.1.11]{NE00} which is applicable if there exists $f\in C_\text{min}(\Delta_p^+)^\star$ such that its orbit $\mathcal{O}_{f}$ under the coadjoint action is closed, the convex hull of $\mathcal{O}_{f}$ contains no affine lines and $\mathcal{O}_f$ spans $\g^\ast$.\\

If one applies \cite[Thm. VIII.1.19]{NE00} to the highest weight $\lambda\in i C_\text{min}(\Delta_p^+)^\star \subseteq i\t^\ast$ of the unitary highest weight representation $\pi$ with discrete kernel, one obtains that $-i\lambda$ is admissible, i.e. its orbit $\mathcal{O}_{-i\lambda}$ under the coadjoint action is closed and its convex hull contains no affine lines. Using \cite[Thm. X.4.1]{NE00} which states that the momentum set of $\pi$ is given by $I_\pi=\text{conv}(\mathcal{O}_{-i\lambda})$ and $(I_\pi)^\bot=\text{ker}(d\pi)=\{0\}$ (cf. \cite[Lemma X.1.6]{NE00}), one obtains that $I_\pi\subseteq \g^\ast$ is generating and
\begin{align*}
-i\lambda \in C_\text{min}(\Delta_p^+)^\star\quad\text{satisfies}\quad\spann_\R\mathcal{O}_{-i\lambda}=\g^\ast.
\end{align*} 
Hence \cite[Prop. VIII.1.11]{NE00} is applicable which yields $C_\text{min}(\Delta_p^+)\subseteq C_\text{max}(\Delta_p^+)$ and thus the positive system $\Delta^+$ is admissible.
\end{proof}
\begin{rem}\label{remuniq}
\rm{Note that \cite[VIII.1.11]{NE00} implies that the positive system $\Delta^+$ in the theorem above is uniquely determined by the condition that $(\pi,\cH)$ is an irreducible unitary highest representation with respect to $\Delta^+$.}
\end{rem}
In \cite{NE00}, the highest weight modules $L(\lambda,\Delta^+)$ are studied, for involutive Lie algebras with root decomposition (cf. \cite[Subsection IX.2]{NE00}). For those modules, one has the following result:
\begin{thm}\label{IX.4.6}\rm{(\cite[Thm. IX.4.6]{NE00})}\\
\it{Let $\g$ be an admissible Lie algebra, $\Delta^+$ an adapted positive system and $L(\lambda,\Delta^+)$ a unitary highest weight module with respect to $\Delta^+$. If} \rm{$X\in C_{\text{max}}(\Delta_p^+)^\circ$} \it{and $\del\pi$ denotes the representation of $\g$ on $L(\lambda,\Delta^+)$, then} \rm{$\tr(e^{i\del\pi(X)})<\infty$.}

\end{thm}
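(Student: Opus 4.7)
My strategy is to expand the trace over the $\t$-weight decomposition of $L(\lambda,\Delta^+)$ and to establish its absolute convergence by dominating the weight multiplicities using an associated generalized Verma module, whose character factors as a finite product of convergent geometric series.

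First I would note that $C_\text{max}(\Delta_p^+)\subseteq\t$, so $X\in\t$, and the operator $i\del\pi(X)$ acts on each weight space $L(\lambda)_\mu$ as the real scalar $i\mu(X)$ (unitarity forces $\mu(\t)\subseteq i\R$, so $i\mu(X)\in\R$). All eigenvalues of $e^{i\del\pi(X)}$ are thus positive real numbers, and trace class reduces to showing convergence of
\begin{align*}
\tr(e^{i\del\pi(X)})=\sum_\mu \dim L(\lambda)_\mu\cdot e^{i\mu(X)}.
\end{align*}

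Next I would bound the weight multiplicities using a generalized Verma module. Adaptedness of $\Delta^+$ yields a triangular decomposition $\g_\C=\p^-\oplus\k_\C\oplus\p^+$ with $\p^\pm=\bigoplus_{\alpha\in\pm\Delta_p^+}\g_\C^\alpha$, and the highest weight $\lambda$ defines an irreducible $\k_\C$-module $F(\lambda)$ which is finite-dimensional because $\k$ is compactly embedded and unitarity forces $\lambda$ to be $\k_\C$-dominant integral. Letting $\p^+$ act trivially on $F(\lambda)$, the module $L(\lambda,\Delta^+)$ is a quotient of the generalized Verma module $N(\lambda):=U(\g_\C)\otimes_{U(\k_\C\oplus\p^+)}F(\lambda)\cong U(\p^-)\otimes_\C F(\lambda)$ (the last identification by PBW, as $\t$-modules). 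Therefore
\begin{align*}
\dim L(\lambda)_\mu\leq \dim N(\lambda)_\mu=\sum_{\mu_0}\dim F(\lambda)_{\mu_0}\cdot P(\mu_0-\mu),
\end{align*}
where $P(\nu)$ is the Kostant partition function counting expressions $\nu=\sum_{\alpha\in\Delta_p^+}n_\alpha\alpha$ with $n_\alpha\in\N_0$, weighted by the root multiplicities.

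Substituting and interchanging the order of summation then gives
\begin{align*}
\tr(e^{i\del\pi(X)})\leq\Bigl(\sum_{\mu_0}\dim F(\lambda)_{\mu_0}\,e^{i\mu_0(X)}\Bigr)\cdot\Bigl(\sum_\nu P(\nu)\,e^{-i\nu(X)}\Bigr).
\end{align*}
The first factor is finite because $F(\lambda)$ is finite-dimensional. By the standard generating function identity for $P$, the second factor equals the finite product $\prod_{\alpha\in\Delta_p^+}(1-e^{-i\alpha(X)})^{-\dim\g_\C^\alpha}$, and this converges precisely because $X\in C_\text{max}(\Delta_p^+)^\circ$ forces $i\alpha(X)>0$ strictly for every $\alpha\in\Delta_p^+$, so that $0<e^{-i\alpha(X)}<1$ and every geometric series in the product converges.

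The main obstacle is verifying that $F(\lambda)$ is finite-dimensional. This is standard when $\k/\z(\g)$ is compact and $\lambda$ is $\k_\C$-dominant integral, but in the general admissible setting it requires combining the compact embedding of $\k$ with the unitarity constraints on $\lambda$ coming from the Shapovalov form, in order to conclude that the $\k_\C$-cyclic module generated by a highest weight vector of $L(\lambda,\Delta^+)$ is finite-dimensional. A secondary subtlety is the PBW bookkeeping when $\p^-$ is non-abelian, but this only permutes monomials and does not affect the $\t$-graded dimension count underlying the geometric-series bound.
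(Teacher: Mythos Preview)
The paper does not prove this theorem; it merely quotes it as \cite[Thm.~IX.4.6]{NE00}. Your sketch is the standard argument and is essentially what Neeb does in \cite{NE00}: dominate $L(\lambda,\Delta^+)$ by the generalized Verma module $N(\lambda)\cong U(\p^-)\otimes_\C F(\lambda)$ via PBW, compute the formal character of $U(\p^-)$ as the product $\prod_{\alpha\in\Delta_p^+}(1-e^{-\alpha})^{-\dim\g_\C^\alpha}$, and evaluate at $X\in C_{\text{max}}(\Delta_p^+)^\circ$ so that each geometric factor converges absolutely.

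You have correctly isolated the one nontrivial point, namely the finite-dimensionality of the $\k_\C$-module $F(\lambda)$. In the admissible setting $\k$ need not be a compact Lie algebra in the abstract sense, only compactly embedded in $\g$; what forces $\dim F(\lambda)<\infty$ is the unitarity of $L(\lambda,\Delta^+)$, which restricts to a positive-definite contravariant form on $F(\lambda)$ and, combined with the $\mathfrak{su}(2)$-type structure of the compact roots, makes $\lambda$ dominant integral for $\Delta_k^+$. This is handled in \cite[Ch.~IX]{NE00}, and once granted your estimate goes through verbatim. The PBW remark about non-abelian $\p^-$ is harmless, as you note: only the $\t$-graded dimensions enter.
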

\begin{rem}
\rm{Let $\g$ be an admissible Lie algebra. Then \cite[Thm. VII.3.10]{NE00} implies that there exists a compactly embedded Cartan subalgebra $\t$  in $\g$ and an admissible positive system of roots $\Delta^+$. Then \cite[Prop. VIII.3.7]{NE00} associates to each admissible positive system $\Delta^+$ a closed generating invariant cone $W_\text{max}(\Delta_p^+)\subseteq\g$ such that $ W_\text{max}\cap \t =C_\text{max}$. In the proof of \cite[Thm. VIII.3.10]{NE00} it is also shown that
\begin{align*}
W_\text{max}(\Delta_p^+)^\circ=\text{Inn}(\g).C_\text{max}(\Delta_p^+)^\circ\subseteq\text{comp}(\g)^\circ.
\end{align*}
If $\g$ is non-compact, the open invariant convex cone $W_\text{max}(\Delta_p^+)^\circ$ is proper since $\text{comp}(\g)^\circ \subsetneq \g$.
}
\end{rem}
We are now able to present our second main result:
\begin{thm}
Let $G$ be a finite-dimensional Lie group with Lie algebra $\g$ and $(\pi,\cH)$ be an irreducible unitary highest representation of $G$ with respect to a positive system $\Delta^+$. Then one has \rm{$\{X\in \g\;|\;\tr(e^{i\del\pi(X)})<\infty\}=W_\text{max}(\Delta_p^+)^\circ.$}
\end{thm}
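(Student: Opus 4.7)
The plan is to prove both inclusions of the claimed equality by exploiting that each side is an $\text{Inn}(\g)$-invariant cone in $\g$. The left-hand side is invariant because $\pi(g)$ is unitary and $e^{i\del\pi(\text{Ad}(g)X)} = \pi(g)e^{i\del\pi(X)}\pi(g)^{-1}$, so the traces agree for $g$ in the identity component of $G$, which via $\text{Ad}$ exhausts $\text{Inn}(\g)$; the right-hand side is invariant by the remark preceding the theorem, which additionally records the stronger identity $W_\text{max}(\Delta_p^+)^\circ = \text{Inn}(\g).C_\text{max}(\Delta_p^+)^\circ$.

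The inclusion $W_\text{max}(\Delta_p^+)^\circ \subseteq \{X : \tr(e^{i\del\pi(X)}) < \infty\}$ follows essentially immediately: given $X$ in the left-hand set, conjugate it by some $\text{Ad}(g_0)$ into $C_\text{max}(\Delta_p^+)^\circ$, then apply Theorem \ref{IX.4.6} to the unitary highest weight module realised on the smooth vectors of $\pi$, and use the invariance above.

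For the reverse inclusion, suppose $\tr(e^{i\del\pi(X)}) < \infty$. Theorem \ref{semisimple} places $X$ in $\text{comp}(\g)^\circ$, and Corollary \ref{corad} supplies a compactly embedded Cartan subalgebra $\t_1 \ni X$ together with an adapted positive system $\Delta_1^+ \subseteq \Delta(\g_\C, (\t_1)_\C)$ with respect to which $\pi$ is unitary highest weight and $X \in C_\text{max}((\Delta_1)_p^+)^\circ$. Since any two compactly embedded Cartan subalgebras of $\g$ are $\text{Inn}(\g)$-conjugate, I choose $\phi = \text{Ad}(g_0) \in \text{Inn}(\g)$ with $\phi(\t_1) = \t$. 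The relation $\del\pi(\phi(Y))\pi(g_0)v = \pi(g_0)\del\pi(Y)v$ shows that $\pi$ is also unitary highest weight with respect to the transported positive system $\widetilde{\Delta}^+ := \{\alpha \circ \phi^{-1} : \alpha \in \Delta_1^+\} \subseteq \Delta(\g_\C, \t_\C)$, so the uniqueness assertion of Remark \ref{remuniq} forces $\widetilde{\Delta}^+ = \Delta^+$. Consequently $\phi$ maps $C_\text{max}((\Delta_1)_p^+)^\circ$ onto $C_\text{max}(\Delta_p^+)^\circ$, hence $\phi(X) \in C_\text{max}(\Delta_p^+)^\circ \subseteq W_\text{max}(\Delta_p^+)^\circ$, and $\text{Inn}(\g)$-invariance of $W_\text{max}^\circ$ returns $X \in W_\text{max}(\Delta_p^+)^\circ$.

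The main obstacle is the positive-system alignment in the reverse inclusion: the Cartan subalgebra produced by Corollary \ref{corad} need not coincide with the one underlying the given $\Delta^+$, and matching $\widetilde{\Delta}^+$ with $\Delta^+$ depends precisely on the uniqueness statement of Remark \ref{remuniq}. A smaller technical point is that Theorem \ref{semisimple} and Corollary \ref{corad} require $\pi$ to have discrete kernel; if this is not assumed at the outset, one first factors $\pi$ through $G/\langle\exp(\n)\rangle$ for $\n := \textbf{L}(\ker\pi)$, and transfers the identity in $\g/\n$ back to $\g$ via compatibility of the $W_\text{max}$-cones under the quotient.
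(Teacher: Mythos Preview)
Your argument is correct and follows the same route as the paper's proof, which consists of the single sentence ``This follows immediately from Corollary~\ref{corad}, Theorem~\ref{admuhwstatement}, and Remark~\ref{remuniq}.'' You have essentially unpacked that sentence: the forward inclusion via $W_\text{max}(\Delta_p^+)^\circ=\text{Inn}(\g).C_\text{max}(\Delta_p^+)^\circ$ together with Theorem~\ref{IX.4.6}, and the reverse inclusion via Corollary~\ref{corad} plus the alignment of positive systems through Remark~\ref{remuniq}. Your explicit treatment of the Cartan-subalgebra conjugation and the transport of $\Delta_1^+$ to $\widetilde{\Delta}^+$ is exactly what the terse proof leaves implicit, and your observation about the discrete-kernel hypothesis is an accurate reading of the standing assumptions; the paper's one-line proof cites Theorem~\ref{admuhwstatement} both to supply admissibility of $\g$ and $\Delta^+$ (so that $W_\text{max}$ is defined and Theorem~\ref{IX.4.6} applies) and implicitly to place us in the discrete-kernel setting where Remark~\ref{remuniq} holds.
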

\begin{proof}
This follows immediately from Corollary \ref{corad}, Theorem  \ref{admuhwstatement}, and Remark \ref{remuniq}.
\end{proof}
As the Gelfand-Rai'kov Theorem (cf. \cite[Thm. X.4.8]{NE00}) applies to simply connected admissible Lie groups, we may combine these results to obtain the following corollary:
\begin{cor}\label{class}
Let $G$ be a simply connected finite-dimensional Lie group with Lie algebra $\g$ and $X\in \g$. Then there is a one-to-one correspondence between 
\begin{itemize}
\item[\rm{(i)}] irreducible unitary representation $\pi$ of $G$ such that $e^{i\del\pi(X)}$ is of trace class,
\item[\rm{(ii)}] irreducible unitary highest weight representations of admissible quotients $q:\g \twoheadrightarrow\g_1$ with respect to an admissible positive system $\Delta_1^+$ of $\g_1$ such that \rm{$q(X)\in W_\text{max}(\Delta_{1,p}^+)^\circ$.}
\end{itemize}
\end{cor}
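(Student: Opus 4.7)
The plan is to reduce the statement to the discrete-kernel setting already handled in Corollary \ref{corad}, Theorem \ref{admuhwstatement}, and the preceding Theorem of this subsection, the reduction being to quotient by the ideal $\n := \ker(\del\pi) \tri \g$.

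For the forward direction, start with an irreducible unitary representation $(\pi,\cH)$ of $G$ with $e^{i\del\pi(X)}$ of trace class. Let $q:\g \twoheadrightarrow \g_1 := \g/\n$ be the quotient projection and $G_1$ the simply connected Lie group with Lie algebra $\g_1$. The analytic subgroup $N := \langle \exp(\n)\rangle$ coincides with the identity component of the kernel of the integrated morphism $G\to G_1$, hence is closed in $G$. Since $\del\pi$ vanishes on $\n$, one has $\pi(N) = \{1\}$, so $\pi$ descends to a representation of the connected Lie group $G/N$ (with Lie algebra $\g_1$) and lifts along the covering $G_1 \to G/N$ to a representation $\pi_1:G_1 \to U(\cH)$. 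This $\pi_1$ has the same image as $\pi$, so it is irreducible; its derivative satisfies $\ker(d\pi_1) = 0$, so its kernel is discrete; and $e^{i\, d\pi_1(q(X))} = e^{i\del\pi(X)}$ is of trace class. Applying Corollary \ref{corad} and Theorem \ref{admuhwstatement} to $\pi_1$ gives that $\g_1$ is admissible and $\pi_1$ is a unitary highest weight representation with respect to a uniquely determined admissible positive system $\Delta_1^+$ (using Remark \ref{remuniq}); the preceding Theorem of this subsection then yields $q(X) \in W_\text{max}(\Delta_{1,p}^+)^\circ$.

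For the backward direction, given data $(q,\Delta_1^+,\pi_1)$ as in (ii), integrate $q$ to a Lie group morphism $Q:G\to G_1$ (possible since $G$ is simply connected) and set $\pi := \pi_1 \circ Q$. Then $\pi$ is an irreducible unitary representation of $G$ with $\del\pi = d\pi_1 \circ q$, so by the preceding Theorem of this subsection, applied to $\pi_1$ with $q(X)\in W_\text{max}(\Delta_{1,p}^+)^\circ$, $e^{i\del\pi(X)}$ is of trace class. The two constructions are mutually inverse: the ideal $\ker(\del\pi)$, the quotient $G/N$, and its universal cover $G_1$ are intrinsic to $\pi$, while Remark \ref{remuniq} gives uniqueness of $\Delta_1^+$ given $\pi_1$; conversely $\pi_1 \circ Q$ has Lie algebra kernel exactly $\ker(q)$ (because $\pi_1$ has discrete kernel), so $q$ is recovered, and the same remark recovers $\pi_1$ and $\Delta_1^+$.

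The main technical obstacle is the quotient-and-lift bookkeeping: verifying that $N$ is closed, that $\pi$ genuinely factors through $G/N$, and that the further lift to the simply connected cover $G_1$ produces an irreducible representation with discrete kernel to which the previous results apply. Once this bookkeeping is in place, the rest of the argument is an assembly of Corollary \ref{corad}, Theorem \ref{admuhwstatement}, the preceding Theorem of this subsection, and the uniqueness statement of Remark \ref{remuniq}.
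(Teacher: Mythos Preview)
Your proof is correct and follows the approach the paper intends; the paper itself gives no explicit proof beyond the sentence preceding the corollary (``combine these results'' together with the Gelfand--Rai'kov Theorem), and your quotient-and-lift construction is precisely the bookkeeping that sentence leaves implicit. One small point worth noting: you assume $\pi_1$ has discrete kernel on $G_1$ when checking that the correspondence is a bijection, and indeed the paper's earlier paragraph speaks of \emph{faithful} unitary highest weight representations of admissible quotients---this faithfulness is tacitly part of (ii), and you are right to use it. The Gelfand--Rai'kov reference in the paper serves only to indicate that the set (ii) is nonempty for simply connected admissible $G_1$; it is not needed for the bijection itself, and you correctly do not invoke it.
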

We will now prove that an element $X$ is contained in $\text{Gibbs}(\g)$ if and only if it is contained in an open proper invariant convex subset of $\g$. 
\begin{thm}\label{geom}
{\rm~\textbf{Geometric characterization of Gibbs elements}}\\
Let $\g$ be a finite-dimensional Lie algebra. Then an element $X$ is a Gibbs element if and only if it is contained in an open proper invariant convex subset of $\g$. 
\end{thm}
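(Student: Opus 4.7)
The plan is to reduce both directions to Corollary \ref{class}, which identifies the Gibbs elements of $\g$ with those $X$ for which there exist an admissible quotient $q:\g\twoheadrightarrow \g_1$ and an admissible positive system $\Delta_1^+$ of $\g_1$ with $q(X)\in W_\text{max}(\Delta_{1,p}^+)^\circ$.

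For the necessity, assume $X$ is a Gibbs element and let $q:\g\twoheadrightarrow \g_1$ and $\Delta_1^+$ be as in Corollary \ref{class}. If $\g_1$ is non-compact, the remark preceding this theorem guarantees that $W_\text{max}(\Delta_{1,p}^+)^\circ\subsetneq \g_1$ is a proper open invariant convex cone, and then $U:=q^{-1}(W_\text{max}(\Delta_{1,p}^+)^\circ)$ is an open proper invariant convex subset of $\g$ containing $X$. If $\g_1$ is compact and non-trivial, then $\g_1$ carries an $\text{Inn}(\g_1)$-invariant inner product, and we may take $U$ to be the preimage under $q$ of any sufficiently large open ball around the origin of $\g_1$.

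For the sufficiency, let $U\subseteq \g$ be an open proper invariant convex subset containing $X$. Consider the lineality space
\begin{align*}
\n:=\{Y\in \g\;|\;U+\R Y\subseteq U\},
\end{align*}
a linear subspace of $\g$ which, because $U$ is $\text{Inn}(\g)$-invariant and $\text{Inn}(\g)$ acts linearly on $\g$, is $\ad(\g)$-stable and hence an ideal. Properness of $U$ forces $\n\subsetneq \g$, so $\g_1:=\g/\n$ is non-trivial. Under the quotient $q:\g\twoheadrightarrow \g_1$, the image $q(U)\subseteq \g_1$ is open, convex, $\text{Inn}(\g_1)$-invariant, and has trivial lineality space by construction; its closure is therefore a generating invariant closed convex subset of $\g_1$ containing no affine line, showing that $\g_1$ is admissible (cf. Definition \ref{defadm}).

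It remains to show that $q(X)$ is a Gibbs element of $\g_1$, after which the Gibbs representation of $\widetilde{G_1}$ at $q(X)$ pulls back along $q$ to a Gibbs representation of the simply connected Lie group with Lie algebra $\g$ at $X$. By the structure theory of invariant convex subsets in admissible Lie algebras (cf. \cite[Ch. VIII]{NE00}), any open invariant convex subset of $\g_1$ containing no affine line is contained in $\text{comp}(\g_1)^\circ$; in particular $q(X)\in \text{comp}(\g_1)^\circ$. Using \cite[Thm. VII.1.8]{NE00}, we may conjugate so that $q(X)$ lies in a compactly embedded Cartan subalgebra $\t_1\subseteq \g_1$. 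Declaring $\Delta_{1,p}^+$ to consist of those non-compact roots $\alpha$ with $i\alpha(q(X))>0$ and extending to an adapted positive system $\Delta_1^+$, the admissibility of $\g_1$ together with \cite[Prop. VIII.3.7]{NE00} ensures that $\Delta_1^+$ is admissible and $q(X)\in W_\text{max}(\Delta_{1,p}^+)^\circ$; Corollary \ref{class} then exhibits $q(X)$ as a Gibbs element. The main obstacle lies in this final step: verifying both the inclusion $q(U)\subseteq \text{comp}(\g_1)^\circ$ and the admissibility of the positive system determined by the chamber containing $q(X)$; both rely on the cone-theoretic machinery of \cite[Ch. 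VIII]{NE00} and require careful bookkeeping of the relationship between $C_\text{min}$ and $C_\text{max}$.
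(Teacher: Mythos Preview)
Your necessity argument and the reduction to an admissible quotient in the sufficiency direction both match the paper's proof (your lineality space $\n$ coincides with the paper's $H(C)=\{v\;|\;v+C=C\}$ for open convex $C$). The gap is exactly where you flag it: the construction of an \emph{admissible} positive system in $\g_1$ with $q(X)\in C_\text{max}(\Delta_{1,p}^+)^\circ$.

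Building $\Delta_{1,p}^+$ from the single point $q(X)$ by taking those non-compact roots with $i\alpha(q(X))>0$ does not work as stated. Nothing rules out $i\alpha(q(X))=0$ for some non-compact root, there is no reason the resulting half-system extends to an \emph{adapted} positive system, and even if it did, admissibility (pointedness of $C_\text{min}$ and the inclusion $C_\text{min}\subseteq C_\text{max}$) is not a consequence of admissibility of $\g_1$ alone; \cite[Prop. VIII.3.7]{NE00} takes an admissible system as input, it does not produce one.

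The paper closes this gap by exploiting the full convex set $q(U)$ rather than the single point $q(X)$. One first observes that $q(U)$ lies in the interior of $\overline{q(U)}$, and \cite[Prop.~VII.3.4(b)]{NE00} gives $\overline{q(U)}^\circ\subseteq\text{comp}(\g_1)^\circ$, so $q(X)$ is elliptic and may be conjugated into a compactly embedded Cartan subalgebra $\t_1$. Then the \textbf{Sandwich Theorem} \cite[Thm.~VII.3.8]{NE00} is applied to the closed generating invariant elliptic convex set $\overline{q(U)}$: it returns a uniquely determined adapted positive system $\Delta_1^+$ with
\[
\overline{q(U)}\cap\t_1\subseteq C_\text{max}(\Delta_{1,p}^+)
\qquad\text{and}\qquad
C_\text{min}(\Delta_{1,p}^+)\subseteq \lim\bigl(\overline{q(U)}\cap\t_1\bigr).
\]
Since $\overline{q(U)}$ contains no affine line, the second inclusion forces $C_\text{min}$ to be pointed; since $C_\text{max}$ is a cone containing $\overline{q(U)}\cap\t_1$, it also contains $\lim(\overline{q(U)}\cap\t_1)\supseteq C_\text{min}$. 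Thus $\Delta_1^+$ is admissible, and the first inclusion together with $q(X)\in q(U)\cap\t_1$ (an interior point) gives $q(X)\in C_\text{max}(\Delta_{1,p}^+)^\circ$. Now Corollary~\ref{class} applies. The moral is that the convex set, not the point, carries the information needed to pin down the admissible positive system.
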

\begin{proof}
Suppose $X\in\text{Gibbs}(\g)$ and $G$ is a simply connected Lie group with Lie algebra $\g$. Then there exists a unitary representation $\pi$ of $G$ such that $e^{i\del\pi(X)}$ is of trace class. Then Corollary \ref{corad} and Theorem \ref{admuhwstatement} imply that $\g/\text{ker}(d\pi)$ is admissible and there exists an admissible positive system $\Delta^+$ such that 
\begin{align*}
q(X)\in C_\text{max}(\Delta_p^+)^\circ\subseteq W_\text{max}(\Delta_p^+)^\circ,
\end{align*}
where $q:\g\to \g/\text{ker}(d\pi)$ denotes the quotient map. If $\g/\text{ker}(d\pi)$ is non-compact, the subset 
\begin{align*}
q^{-1}(W_\text{max}(\Delta_p^+)^\circ)\subsetneq \g
\end{align*} 
is an open proper invariant convex subset containing $X$. If $\g^\prime:=\g/\text{ker}(d\pi)$ is compact, choose an $\text{Inn}(\g^\prime)$-invariant metric on $\g^\prime$ and an open ball $B\subsetneq \g^\prime$ with respect to this metric containing $q(X)$. Then $q^{-1}(B)\subsetneq \g$ is an open proper invariant convex subset containing $X$.\\

Suppose conversely that $X$ is contained in an open proper invariant convex subset $C\subsetneq \g$. We adopt elements of the proof of \cite[Prop. VII.3.4]{NE00}. We consider the set
\begin{align*}
H(C):=\{v\in \g\;|\;v+C=C\}.
\end{align*}
$H(C)$ is a convex additive subgroup of the vector space $\g$ which implies that $H(C)$ a linear subspace  and the invariance of $C$ yields that $H(C)$ is an ideal. Consider the quotient map $q:\g\to\g^\prime:= \g/H(C)$ and observe that $q(C)$ is an invariant open generating convex subset. It satisfies
\begin{align*}
H_{\g^\prime}(q(C))=\{w\in \g^\prime\;|\;w+q(C)=q(C)\}=\{q(v)\in \g^\prime\;|\;v\in \g,\;q(v+C)=q(C)\}=\{0\}
\end{align*}
since $q(v+C)=q(C)$ is equivalent to $(v+C)+H(C)=C+H(C)$, and then $C+H(C)=C$ implies $v\in H(C)$. Therefore the closure of $q(C)$ is an invariant generating closed convex subset containing no affine line and thus $\g^\prime$ is admissible. To apply Theorem \ref{IX.4.6} and Corollary \ref{class}, we show that there exists an admissible positive system $\Delta^+$ such that $q(X)\in C_\text{max}(\Delta_p^+)^\circ$.  \\

The set $q(C)$ is contained in the interior of its closure $\overline{q(C)}$ whose interior consists of elliptic elements (cf. \cite[Prop. VII.3.4(b)]{NE00}) and thus $q(X)$ is contained in the open invariant elliptic convex set $q(C)$.\\

For a compactly embedded Cartan subalgebra $\t\subseteq\g^\prime$, one has $\text{comp}(\g^\prime)=\text{Inn}(\g^\prime).\t$ (cf. \cite[Thm. VII.1.8]{NE00}), we may assume $q(X)\in \t$. Then the Sandwich Theorem (cf. Theorem \ref{thmsandwich}) implies that there exists an adapted positive system $\Delta^+$ with respect to $\t$ such that 
\begin{align*}
\overline{q(C)}\cap\t\subseteq C_\text{max}(\Delta_p^+)\quad\text{and}\quad C_\text{min}(\Delta_p^+)\subseteq \lim(\overline{q(C)}\cap \t):=\{v\in \t\;|\;v+\overline{q(C)}\cap\t\subseteq  \overline{q(C)}\cap\t\}.
\end{align*}
As $\overline{q(C)}$ contains no affine line, the cone $C_\text{min}(\Delta_p^+)$ is pointed and since $C_\text{max}$ is a cone and $\overline{q(C)}\cap\t\subseteq C_\text{max}(\Delta_p^+)$, it follows that 
\begin{align*}
C_\text{min}(\Delta_p^+)\subseteq\lim(\overline{q(C)}\cap \t)\subseteq C_\text{max}(\Delta_p^+).
\end{align*} 
This proves that $\Delta^+$ is admissible and that $q(X)\in C_\text{max}(\Delta_p^+)^\circ$. Hence the assertion follows.
\end{proof}

\begin{ex}
\rm{Let $(V,\Omega)$ be a symplectic vector space, $\h(V,\Omega)=\R\oplus V$ be the corresponding Heisenberg algebra and 
\begin{align*}
\mathfrak{sp}(V,\Omega):=\{A\in \mathfrak{gl}(V)\;|\;\Omega(v,Aw)+\Omega(Av,w)=0\;\text{for all}\; v,w\in V\}
\end{align*} 
be the corresponding symplectic Lie algebra. The prototypical example of an admissible Lie algebra which is not semi-simple, is given by the Jacobi algebra $\g:=\hsp:=\h(V,\Omega)\rtimes \sp$. The map
\begin{align*}
\Phi:\hsp\to C^\infty(V),\quad \Phi(s,v,A)(w):=\tfrac{1}{2}\Omega^{A}(w)+\Omega(v,w)+s,
\end{align*}
for $s\in \R,\;v,w\in V,\;A\in \sp$, defines an injective homomorphism of Lie algebras if one equips $C^\infty(V)$ with the Poisson-bracket corresponding to $\Omega$ (cf. \cite[Prop. A.IV.15]{NE00}). The image of $\Phi$ coincides with the polynomials of degree $\leq 2$ and it maps $\sp$ to polynomials of homogeneous degree $2$. Continuing onwards we identify $\hsp\cong \text{im}(\Phi)$. Consider a symplectic basis
\begin{align*}
\{p_j,q_j\;|\; 1\leq j\leq n:=\tfrac{1}{2}\text{dim}_\R V\}\quad\text{and define}\quad \t_\l:=\spann_\R\{p_i^2+q_i^2\;|\;1\leq i\leq n\}.
\end{align*}
Then $\t_\l$ is a compactly embedded Cartan subalgebra of $\l:=\sp$. If one computes the root system $\Delta:=\Delta(\l_\C,(\t_\l)_\C)$ as in \cite[Ex. VII.2.30]{NE00} and chooses the up to a sign unique positive admissible system  $\Delta^+$, then one has
\begin{align*}
C_\text{max}(\Delta_p^+)=\text{cone}(\{p_i^2+q_i^2\;|\;1\leq i\leq n\})=\{\Omega^{A}\in\t_\l\;|\; \Omega^{A}\geq 0\}.
\end{align*}
Since there is up to a sign only one admissible positive system of non-compact roots $\Delta_p^+$ in the simple hermitian Lie algebra $\sp$ (cf. \cite[Lemma VII.2.16(iv)]{NE00})  and $\Omega^{A}> 0$ if and only if $A$ is elliptic and all its eigenvalues are contained in $i\R^+$, this proves that the set of Gibbs elements consists of all elliptic symplectic endomorphisms for which the eigenvalues are contained in either $i\R^+$ or $i\R^-$. \\

Then \cite[Ex. VII.2.30]{NE00}) implies that $\t=\z(\g)\oplus \t_\l$ is a compactly embedded Cartan subalgebra of $\hsp$ for the corresponding admissible positive systems of roots $\Delta_0^\pm\subseteq\Delta(\g_\C,\t_\C)$ defined by extending the elements of $\Delta^\pm\subseteq \Delta(\l_\C,(\t_\l)_\C)$ extended to $\t=\z(\g)\oplus \t_\l$ by zero on $\z(\g)$. In particular, one has
\begin{align*}
W_\text{max}(\Delta_{0,p}^\pm)^\circ=\text{Inn}(\g).C_\text{max}(\Delta_{0,p}^\pm)^\circ=\text{Inn}(\g).(\z(\g)\oplus C_\text{max}(\Delta_{p}^\pm)^\circ)
\end{align*}
which implies $W_\text{max}(\Delta_{0,p}^\pm)^\circ=\{(s,v,A)\in \hsp\;|\;\Omega^{A}>0\}$ which gets mapped to the set of strictly convex polynomials of degree $\leq 2$ by $\Phi$. This is the interior of the $\text{Inn}(\g)$-invariant cone 
\begin{align*}
W:=\{f\in \text{im}(\Phi)\;|\;\inf f>\infty\}=\left\{\Phi(s,v,A)\;|\;\Omega^{A}\geq 0,\;v\in \text{rad}(\Omega^{A})^{\bot_\Omega}\right\}
\end{align*}
of polynomials of degree $\leq 2$ which are bounded from below. Note that this set is not closed as its boundary coincides with all polynomials of degree one.
}
\end{ex}
\section{KMS states of topological groups}\label{section1}
In this section, we discuss how results for KMS states of C*-algebras and W*-algebras can be transferred to topological groups. For general topological groups $G$, there exists a universal group W*-algebra $W^\ast(G)$. It has the property that, for a strongly continuous one-parameter group of automorphisms $\alpha:\R\to \Aut(G)$, one obtains a $\sigma$-weakly continuous one-parameter group of *-automorphisms $\tau:\R\to \Aut(W^\ast(G))$ such that a state $\varphi$ of $G$ is an $(\alpha,\beta)$-KMS state if and only if its corresponding state $\omega_\varphi$ of $W^\ast(G)$ is a $(\tau,\beta)$-KMS state (cf. Proposition \ref{kmsW*}). As is the case for W*-dynamical systems, one does not have good control over the set of KMS states in general. For locally compact groups this is not the case, as the availability of the group C*-algebra $C^\ast(G)$ has the consequence that the set $K_\beta$ of $(\alpha,\beta)$-KMS states of a locally compact topological group is a weak*-compact convex subset of $L^\infty(G)\cong L^1(G)^\prime$ (cf. Theorem \ref{convex}). Therefore the Krein-Millman Theorem implies that $K_\beta$ is the closed convex hull of its extremal points $\text{Ext}(K_\beta)$. As a KMS state is extremal if and only if it is a factor state, it suffices to study factorial KMS states. For general topological groups $G$, we characterize semi-finite KMS states as generalized Gibbs states. This allows us to apply the results from Section \ref{secGibbs} to factorial type I KMS states of finite-dimensional Lie groups $G$ for one-parameter groups of inner automorphisms generated by $X\in \textbf{L}(G)$, which correspond to irreducible unitary representations $\rho$ such that $\tr(e^{i\beta\del\rho(X)})<\infty$. 
\subsection{Elementary properties of KMS states of topological groups}\label{subselemprop}
The goal of this subsection is to relate the concepts $(\alpha,\beta)$-KMS states of a topological group $G$ and KMS states of W*-dynamical systems defined by $\alpha$-covariant representations of the group~$G$. In particular, we show that each $(\alpha,\beta)$-KMS state of a topological group is $\alpha$-invariant.
\begin{defin}\label{defkms}
\rm{(cf. \cite[Def. 5.3.1]{BR96}, \cite[Lemma 5.3.7]{BR96}):\\
Let $\omega$ be a state of a C*-algebra $\cA$ (resp. normal state of a von Neumann algebra) and let $\tau:\R\to\Aut(\cA)$ be a strongly (resp. $\sigma$-weakly) continuous one-parameter group of *-automorphisms. We call $\omega$ an $(\tau,\beta)$-KMS state if the functions 
\begin{align*}
F_{A,B}:\R\to \C,\quad t\mapsto \omega(A\tau_t(B)),\qfor A,B\in \cA,
\end{align*} 
extend to continuous maps on the strip $\ostrip:=\{z\in \C\;|\;0\leq \text{Im}(z)\leq \beta\}$ that satisfy the reflection relation $F_{A,B}(t+i\beta)=\omega(\tau_t(B)A)$, for $t\in \R$, and are holomorphic on the interior.}
\end{defin}
We mimic the equivalent definition of KMS states of C*-or W*-dynamical systems $(\cA,\tau,\R)$ to define KMS states of topological groups (cf. \cite[Lemma 5.3.7]{BR96}). 
\begin{defin}\label{defkmsgrp}
\rm{Let $(G,\alpha,\R)$ be a group dynamical system and $\varphi:G\to \C$ be a state. For $\beta>0$, we say that $\varphi$ satisfies the $(\alpha,\beta)$-KMS condition if the functions 
\begin{align*}
F_{x,y}:\R\to \C,\quad t\mapsto \varphi(x\alpha_t(y)),\qfor x,y\in G,
\end{align*} 
extend to continuous maps on the strip $\ostrip:=\{z\in \C\;|\;0\leq \text{Im}(z)\leq \beta\}$ that are holomorphic on the interior and satisfy the reflection relation $F_{x,y}(t+i\beta)=\varphi(\alpha_t(y)x)$, for $t\in \R$.}
\end{defin}
We now define a W*-dynamical system $(W^\ast(G),\tau,\R)$ corresponding to a group dynamical system $(G,\alpha,\R)$ and a state $\omega_\varphi$ of $W^\ast(G)$ corresponding to a state $\varphi$ such that $\varphi$ is an $(\alpha,\beta)$-KMS state of $G$ if and only if $\omega_\varphi$ is a $(\tau,\beta)$-KMS state of $W^\ast(G)$.
\begin{defin}
\rm{For a topological group $G$, we define the group W*-algebra as a pair of a von Neumann algebra $W^\ast(G)$ and a $\sigma$-weakly continuous group homomorphism $\pi:G\to U(W^\ast(G))$ that has the universal property that for every strongly continuous unitary representation $(\rho,\cH)$ of $G$ there exists a unique *-homomorphism 
\begin{align*}
\Phi:W^\ast(G)\to \rho(G)^{\prime\prime}\quad\text{such that}\quad \Phi(\pi(g))=\rho(g),\quad\text{for all}\quad g\in G.
\end{align*}
For every topological group $G$, there exists such an algebra by \cite[Prop. 4.2]{GN00}.
}
\end{defin}
\begin{rem}
\rm{Let $G$ be a topological group.
\begin{itemize}
\item[(i)] By the universal property, for every $\alpha\in \Aut(G)$ there exists a *-automorphism 
\begin{align*}
\Phi:W^\ast(G)\to W^\ast(G)\quad \text{such that}\quad \Phi(\pi(g))=\pi(\alpha(g)) ,\quad\text{for all}\quad g\in G.
\end{align*}
For a strongly continuous one-parameter group of automorphisms $\alpha:\R\to\Aut(G)$, the corresponding one-parameter group of *-automorphisms $\tau:\R\to \Aut(W^\ast(G))$ is $\sigma$-weakly continuous since $\spann_\C\pi(G)\subseteq W^\ast(G)$ is $\sigma$-weakly dense and on $U(\cH)$ the $\sigma$-weak and the strong operator topology coincide.
\item[(ii)] If $\varphi$ is a state of $G$ with GNS-representation $(\pi_\varphi,\cH_\varphi,\Omega_\varphi)$, then the corresponding representation $\pi_\varphi:W^\ast(G)\to \pi_\varphi(G)^{\prime\prime}$ defines the state
\begin{align*}
\omega_\varphi:W^\ast(G)\to \C,\quad A\mapsto \langle \Omega_\varphi,\pi_\varphi(A)\Omega_\varphi\rangle
\end{align*}
which satisfies $\omega_\varphi(\pi(g))=\varphi(g)$, for all $g\in G$.
\end{itemize}
}
\end{rem}
If one adapts the proof of \cite[Prop. 5.3.7]{BR96}
\footnote{In \cite[Def. 5.3.1]{BR96} KMS states of a C*-or W*-dynamical system are defined by a property for a suitably dense *-subalgebra $\mathcal{B}$ of $\tau$-analytic elements to ensure that for $A,B\in \mathcal{B}$ the maps $F_{A,B}(t):=\omega(A\tau_t(B))$ extend to $\C$ and satisfy $\omega(A\tau_{i\beta}(B))=\omega(BA)$. In the proof of \cite[Prop. 5.3.7]{BR96}, it is only used that the *-subalgebra $\cB$ consists of $\tau$-analytic elements to ensure the extension property of the functions $F_{A,B}$.}, one obtains the following lemma:
\begin{lem}\label{kmsdens}
Suppose $(\cA,\tau,\R)$ is a C*-dynamical system (resp. W*-dynamical system) and $\omega:\cA\to \C$ is a (resp. normal) state. If $\cK\subseteq \cA$ is a norm-dense (resp. $\sigma$-weakly dense) subspace (resp. *-subalgebra) such that, for $A,B\in \cK$, the functions
\begin{align*}
F_{A,B}:\R\to \C,\quad t\mapsto\omega(A\tau_t(B))
\end{align*}
extend to a continuous functions on the strip {\rm~$\ostrip:=\{z\in \C\;|\;0\leq \text{Im}(z)\leq \beta\}$} that are holomorphic on the interior and satisfy $F_{A,B}(t+i\beta)=\omega(\tau_t(A)B)$, for $t\in \R,$, then $\omega$ is a $(\tau,\beta)$-KMS state.
\end{lem}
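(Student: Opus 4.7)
The plan is to establish the extension property for arbitrary $A, B \in \cA$ by approximating them with elements of $\cK$ and controlling the resulting family of strip-holomorphic functions via the Phragmén--Lindelöf three-lines theorem. The key quantitative input is that, since $\omega$ is a state and each $\tau_t$ is isometric, the functions $F_{A,B}$ with $A, B \in \cK$ satisfy $|F_{A,B}(t)|, |F_{A,B}(t+i\beta)| \leq \|A\|\|B\|$ on both boundary lines of $\ostrip$; the three-lines theorem then propagates this bound to the entire strip.

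For the C*-case, I would fix $A, B \in \cA$ and select sequences $(A_n), (B_n) \subseteq \cK$ converging to them in norm. Because $\cK$ is a subspace, the differences split bilinearly, so $F_{A_n, B_n} - F_{A_m, B_m}$ is again the extension of a pair in $\cK$, namely $F_{A_n-A_m, B_n} + F_{A_m, B_n - B_m}$. The three-lines bound then gives
\[
\sup_{z \in \ostrip} |F_{A_n, B_n}(z) - F_{A_m, B_m}(z)| \leq \|A_n - A_m\|\|B_n\| + \|A_m\|\|B_n - B_m\|,
\]
so $(F_{A_n, B_n})$ is Cauchy in the Banach space of bounded continuous functions on $\ostrip$ that are holomorphic in the interior (holomorphy passes to uniform limits by Morera's theorem). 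Its limit $F$ furnishes the desired extension of $F_{A,B}$, and the norm continuity of $\omega$ and of $t \mapsto \tau_t(\cdot)$ identifies the boundary values, including the reflection $F(t+i\beta) = \omega(\tau_t(A)B)$.

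For the W*-case the argument is structurally the same but relies on Kaplansky's density theorem, which is precisely why the hypothesis requires $\cK$ to be a *-subalgebra: every $A \in \cA$ with $\|A\| \leq 1$ is the $\sigma$-weak limit of a bounded net in the unit ball of $\cK$, and similarly for $B$. The three-lines bound keeps the family $\{F_{A_\lambda, B_\mu}\}$ uniformly bounded on $\ostrip$, and Montel's theorem produces accumulation points in the compact-open topology on the open strip. Normality of $\omega$ together with $\sigma$-weak continuity of $\tau_t$ pins down the boundary values of any accumulation point as $t \mapsto \omega(A \tau_t(B))$ on $\R$ and $t \mapsto \omega(\tau_t(A) B)$ on $\R + i\beta$. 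The main obstacle here is precisely that Montel only supplies holomorphy on the \emph{open} strip; to upgrade to continuity on $\ostrip$ and to guarantee that the accumulation point does not depend on the approximating net, one appeals to the Poisson integral representation for bounded holomorphic functions on the strip, which reconstructs such a function uniquely from its continuous boundary data. This canonical reconstruction yields the extension and makes convergence of the net itself unambiguous.
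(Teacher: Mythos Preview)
Your outline is correct and aligns with the paper's approach: the paper gives no independent proof but simply refers to \cite[Prop.~5.3.7]{BR96}, noting in a footnote that $\tau$-analyticity is used there only to guarantee the strip-extension property that you are assuming directly. The three-lines bound plus norm approximation in the C*-case and the Kaplansky/Poisson-kernel route in the W*-case are exactly the ingredients of that Bratteli--Robinson argument.
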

\begin{prop}\label{kmsW*}
Let $G$ be a topological group $G$, \rm{$\alpha:\R\to\Aut(G)$} \it{be a strongly continuous one-parameter group of automorphisms and $\varphi:G\to \C$ a state. Then $\varphi$ is an $(\alpha,\beta)$-KMS state if and only if the corresponding state $\omega_\varphi:W^\ast(G)\to \C$ is a $(\tau,\beta)$-KMS state.}
\end{prop}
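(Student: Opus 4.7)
The strategy is to transfer the extension-and-reflection property between $G$ and a $\sigma$-weakly dense $\ast$-subalgebra of $W^\ast(G)$ on which the correlation functions split as finite $\C$-linear combinations of the maps $F_{x,y}$ from Definition \ref{defkmsgrp}. The natural candidate is $\cK := \spann_\C \pi(G) \subseteq W^\ast(G)$, which is a $\sigma$-weakly dense $\ast$-subalgebra: closure under multiplication follows from $\pi(g)\pi(h)=\pi(gh)$, $\ast$-closedness from $\pi(g)^\ast=\pi(g^{-1})$, and the $\sigma$-weak density is recorded in the remark preceding the proposition.

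For the direction ``$\omega_\varphi$ is $(\tau,\beta)$-KMS $\Rightarrow$ $\varphi$ is $(\alpha,\beta)$-KMS'', I would simply evaluate the KMS condition for $\omega_\varphi$ at $A=\pi(x)$ and $B=\pi(y)$. Since $\tau_t(\pi(y))=\pi(\alpha_t(y))$ and $\omega_\varphi\circ\pi=\varphi$, the function $F_{x,y}(t)=\varphi(x\alpha_t(y))$ agrees with $F_{\pi(x),\pi(y)}(t)=\omega_\varphi(\pi(x)\tau_t(\pi(y)))$, so the holomorphic extension to $\ostrip$ and the reflection identity $F_{x,y}(t+i\beta)=\omega_\varphi(\tau_t(\pi(y))\pi(x))=\varphi(\alpha_t(y)x)$ transfer verbatim.

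For the converse, Lemma \ref{kmsdens} reduces the KMS condition for $\omega_\varphi$ on $W^\ast(G)$ to the same condition for all $A,B\in\cK$. Writing $A=\sum_i\lambda_i\pi(x_i)$ and $B=\sum_j\mu_j\pi(y_j)$, $\C$-bilinearity gives
\begin{align*}
F_{A,B}(t) \;=\; \omega_\varphi(A\tau_t(B)) \;=\; \sum_{i,j}\lambda_i\mu_j\,\varphi(x_i\alpha_t(y_j)).
\end{align*}
Each summand extends by hypothesis to a continuous function on $\ostrip$ that is holomorphic on the interior, so the finite sum does too, and the termwise reflection identity for $\varphi$ yields
\begin{align*}
F_{A,B}(t+i\beta) \;=\; \sum_{i,j}\lambda_i\mu_j\,\varphi(\alpha_t(y_j)x_i) \;=\; \omega_\varphi(\tau_t(B)A),
\end{align*}
which is precisely the input required by Lemma \ref{kmsdens}, so $\omega_\varphi$ is $(\tau,\beta)$-KMS.

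The main structural point — and the only step beyond bookkeeping — is the verification that Lemma \ref{kmsdens} applies in its $\sigma$-weakly dense $\ast$-subalgebra form to the candidate $\cK$. Once this is in place, the argument is just a matter of pulling scalar coefficients out of finite linear combinations and invoking the bilinearity of the functionals $F_{A,B}$ and $F_{x,y}$.
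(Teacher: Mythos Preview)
Your argument is correct and matches the paper's proof: both use the identity $\omega_\varphi(\pi(x)\tau_t(\pi(y)))=\varphi(x\alpha_t(y))$ together with Lemma \ref{kmsdens} applied to the $\sigma$-weakly dense $\ast$-subalgebra $\spann_\C\pi(G)\subseteq W^\ast(G)$. You have simply spelled out the bilinearity step and the two directions more explicitly than the paper's two-sentence version.
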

\begin{proof}
Observe that $\omega_\varphi(\pi(x)\tau_t(\pi(y)))=\omega_\varphi(\pi(x\alpha_t(y)))=\varphi(x\alpha_t(y))$ holds, for all $x,y\in G$. Then the assertion follows from Lemma \ref{kmsdens} since $\spann_\C\pi(G)$ is a $\sigma$-weakly dense *-subalgebra of $W^\ast(G)$ on which the KMS-condition is satisfied.
\end{proof}
\begin{cor}\label{invar}
Let $G$ be a topological group $G$, \rm{$\alpha:\R\to\Aut(G)$} \it{be a strongly continuous one-parameter group of automorphisms and $\varphi:G\to \C$ an $(\alpha,\beta)$-KMS state. Then $\varphi$ is $\alpha$-invariant.}
\end{cor}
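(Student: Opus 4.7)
My plan is to reduce the statement to the classical invariance theorem for KMS states of W*-dynamical systems, using Proposition \ref{kmsW*} as the bridge. First, I apply that proposition to transfer the $(\alpha,\beta)$-KMS condition on $\varphi$ to a $(\tau,\beta)$-KMS condition on the associated normal state $\omega_\varphi$ of the W*-dynamical system $(W^\ast(G),\tau,\R)$. Once inside the W*-setting, the well-known fact that any $(\tau,\beta)$-KMS state is $\tau$-invariant (see e.g.\ \cite[Prop.\ 5.3.3]{BR96}) immediately gives $\omega_\varphi\circ\tau_t=\omega_\varphi$ for every $t\in\R$.

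It then only remains to push this identity back to the group via the defining intertwining relation. For $g\in G$ we have $\tau_t(\pi(g))=\pi(\alpha_t(g))$ by the construction of $\tau$, and $\omega_\varphi(\pi(\cdot))=\varphi(\cdot)$ by the construction of $\omega_\varphi$, so that
\[
\varphi(\alpha_t(g))=\omega_\varphi(\pi(\alpha_t(g)))=\omega_\varphi(\tau_t(\pi(g)))=\omega_\varphi(\pi(g))=\varphi(g),
\]
which is exactly the $\alpha$-invariance of $\varphi$.

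Since both ingredients are already in place, there is no genuine obstacle here; the statement is essentially an unpacking of Proposition \ref{kmsW*} together with the W*-side invariance result. If one preferred a self-contained direct proof, one could specialize the KMS condition at $x=e$, $y=g$: the function $F_{e,g}(t)=\varphi(\alpha_t(g))$ extends continuously to $\ostrip$, is holomorphic in the interior, and satisfies the periodicity $F_{e,g}(t+i\beta)=\varphi(\alpha_t(g)\cdot e)=F_{e,g}(t)$, so its periodic continuation is an entire function; combined with the bound $|\varphi|\leq 1$ (from positive definiteness of $K_\varphi$) on the two boundary lines and boundedness on the strip, Liouville's theorem would force it to be constant. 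The mildly subtle point in this alternate route is justifying boundedness across the closed strip, which is precisely the technicality that passing through Proposition \ref{kmsW*} sidesteps, since in $W^\ast(G)$ one has $\|\pi(g)\|=1$ and the standard bound on $F_{A,B}$ applies.
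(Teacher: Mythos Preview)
Your proof is correct and follows exactly the paper's approach: transfer to $W^\ast(G)$ via Proposition~\ref{kmsW*} and invoke \cite[Prop.~5.3.3]{BR96} to obtain $\tau$-invariance of $\omega_\varphi$, then read off $\alpha$-invariance of $\varphi$. The additional Liouville-type sketch you offer is a reasonable alternative but is not needed here.
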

\begin{proof}
\cite[Prop. 5.3.3]{BR96} implies that $\omega_\varphi$ is $\tau$-invariant. This proves the assertion.
\end{proof}
Let $G$ be a topological group, $\alpha:\R\to \text{Aut}(G)$ be a strongly continuous one-parameter group of automorphisms and $\varphi$ be an $\alpha$-invariant state with GNS-representation $(\pi,\cH,\Omega)$. If $(\pi_\varphi,\cH_\varphi,\Omega_\varphi)$ denotes the GNS-representation of $\varphi$, then there exists a strongly continuous unitary one-parameter group $(U_t)_{t\in \R}$ on $\cH_\varphi$ such that
\begin{align*}
U_t\pi_\varphi(g)U_{-t}=\pi_\varphi(\alpha_t(g))\quad\text{and}\quad U_t\Omega_\varphi=\Omega_\varphi,\quad\text{for}\quad t\in \R,\;g\in G.
\end{align*}
Consider the normal state $\omega_\varphi:\pi_\varphi(G)^{\prime\prime}\to \C,\; A\mapsto \langle \Omega_\varphi, A \Omega_\varphi\rangle$ and the one-parameter group of *-automorphisms $\tau$ defined on $\pi_\varphi(G)^{\prime\prime}$ by $\tau_t(A):=U_tAU_{-t}$, for $t\in \R$ and $A\in\pi_\varphi(G)^{\prime\prime}$, which is $\sigma$-weakly continuous. 
\begin{prop}
A state $\varphi$ of a group dynamical system $(G,\R,\alpha)$ is an $(\alpha,\beta)$-KMS state if and only if $\omega_\varphi$ is a $(\tau,\beta)$-KMS state of the W*-dynamical system $(\pi_\varphi(G)^{\prime\prime},\tau,\R)$.
\end{prop}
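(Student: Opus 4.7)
The plan is to reduce the proposition to a direct translation between the two KMS conditions via the dense subalgebra $\text{span}_\C \pi_\varphi(G) \subseteq \pi_\varphi(G)^{\prime\prime}$, using Lemma \ref{kmsdens} in the same spirit as the proof of Proposition \ref{kmsW*}. First I would record the defining identity: for $g,h\in G$ and $t\in \R$,
\begin{align*}
\omega_\varphi(\pi_\varphi(g)\tau_t(\pi_\varphi(h)))=\langle\Omega_\varphi,\pi_\varphi(g)U_t\pi_\varphi(h)U_{-t}\Omega_\varphi\rangle=\langle\Omega_\varphi,\pi_\varphi(g\alpha_t(h))\Omega_\varphi\rangle=\varphi(g\alpha_t(h)),
\end{align*}
where I use $U_t\Omega_\varphi=\Omega_\varphi$ and the covariance relation $U_t\pi_\varphi(h)U_{-t}=\pi_\varphi(\alpha_t(h))$. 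The analogous computation yields $\omega_\varphi(\tau_t(\pi_\varphi(h))\pi_\varphi(g))=\varphi(\alpha_t(h)g)$, so the boundary values on the two sides of the strip $\ostrip$ match as well.

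For the \emph{only if} direction, assume $\varphi$ is $(\alpha,\beta)$-KMS. For $A=\sum_{i=1}^{n} c_i\pi_\varphi(g_i)$ and $B=\sum_{j=1}^{m} d_j\pi_\varphi(h_j)$ in the *-subalgebra $\cK:=\spann_\C\pi_\varphi(G)$, bilinearity gives
\begin{align*}
\omega_\varphi(A\tau_t(B))=\sum_{i,j}c_id_j\,\varphi(g_i\alpha_t(h_j)),
\end{align*}
which is a finite linear combination of functions $F_{g_i,h_j}$ that extend continuously to $\ostrip$, are holomorphic on its interior, and satisfy the reflection property to $\omega_\varphi(\tau_t(B)A)$ by the identities above. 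Since $\cK$ is $\sigma$-weakly dense in $\pi_\varphi(G)^{\prime\prime}$ (because $\pi_\varphi(G)^{\prime\prime}$ is by definition the $\sigma$-weak closure of $\cK$), Lemma \ref{kmsdens} applies and $\omega_\varphi$ is a $(\tau,\beta)$-KMS state.

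Conversely, if $\omega_\varphi$ is $(\tau,\beta)$-KMS, then for every $x,y\in G$ the map $t\mapsto\omega_\varphi(\pi_\varphi(x)\tau_t(\pi_\varphi(y)))$ extends to a continuous function on $\ostrip$ that is holomorphic on the interior and satisfies the reflection identity to $\omega_\varphi(\tau_t(\pi_\varphi(y))\pi_\varphi(x))$. By the identities from the first step these extensions are precisely continuations of $F_{x,y}(t)=\varphi(x\alpha_t(y))$ with boundary values $\varphi(\alpha_t(y)x)$, so $\varphi$ is an $(\alpha,\beta)$-KMS state.

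I do not expect a genuine obstacle here; the only point deserving a sentence of care is that $\tau$ is well-defined, $\sigma$-weakly continuous, and an automorphism group of $\pi_\varphi(G)^{\prime\prime}$. This follows because $U_t\pi_\varphi(G)U_{-t}=\pi_\varphi(G)$ implies $\tau_t(\pi_\varphi(G)^{\prime\prime})=\pi_\varphi(G)^{\prime\prime}$, and the $\sigma$-weak continuity on $\pi_\varphi(G)^{\prime\prime}$ is inherited from the strong continuity of $(U_t)_{t\in\R}$ together with the agreement of the strong operator and $\sigma$-weak topologies on $U(\cH_\varphi)$, as already noted in the remark preceding Proposition \ref{kmsW*}.
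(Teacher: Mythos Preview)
Your proof is correct and follows essentially the same approach as the paper: both rely on the identity $\omega_\varphi(\pi_\varphi(g)\tau_t(\pi_\varphi(h)))=\varphi(g\alpha_t(h))$, the $\sigma$-weak density of $\spann_\C\pi_\varphi(G)$ in $\pi_\varphi(G)^{\prime\prime}$, and Lemma~\ref{kmsdens}. The paper's proof is simply more terse, while you spell out the bilinear extension and both implications explicitly.
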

\begin{proof}
This follows immediately from the $\sigma$-weak density of $\spann_\C\pi_\varphi(G)\subseteq \pi(G)^{\prime\prime}$, the relation
\begin{align*}
\omega(\pi_\varphi(g)\tau_t(\pi_\varphi(h)))=\varphi(g\alpha_t(h)),\qfor g,h\in G,\;t\in \R
\end{align*}
and Proposition \ref{kmsdens}.
\end{proof}
\subsection{C*-methods for locally compact groups}\label{subgroupc}
Let $G$ be a locally compact group. The \textit{group $C^*$-algebra} of $G$ is defined as the universal enveloping $C^*$-algebra $C^*(G):=C^*(L^1(G))$ of the convolution algebra $L^1(G)$, i.e. the completion of $L^1(G)$ with respect to the maximal $C^*$-norm given on $L^1(G)$ by
\begin{align*}
\|f\|_{C^*}:=\sup\{\|\pi(f)\|\;|\;\pi:L^1(G)\to B( \cH)\;\text{non-degenerate *-representation}\}.
\end{align*}
Every non-degenerate *-representation $\pi:L^1(G)\to B(\cH)$ extends uniquely to a continuous *-representation of $C^\ast(G)$ and $\|\pi(f)\|\leq \|f\|_{C^\ast}$ holds trivially for all $f\in L^1(G)$. We will usually also denote this extension by $\pi$. Note that non-degenerate *-representation are in one-to-one correspondence with unitary representations of $G$ (cf. \cite[Prop. 6.2.3]{DE09}). This one-to-one correspondence yields the following one-to-one correspondence of states as they are uniquely determined by their cyclic GNS-representation and a fixed cyclic vector.
\begin{lem}\label{affin}
There is a one-to-one correspondence of states $\varphi$ of $G$ and states $\omega$ of $C^*(G)$ established by $\Phi:\cS(G)\to\cS(C^\ast(G)),\;\varphi\mapsto\omega_\varphi$, where $\omega_\varphi$ is uniquely determined by
\begin{align*}
\omega_\varphi(f)=\int_G f(x)\varphi(x)dx,\quad\text{for}\quad f\in L^1(G).
\end{align*}
If one equips $\cS(G)$ with the weak*-subspace topology from $\cS(G)\subseteq L^\infty(G)\cong L^1(G)^\prime$ and $\cS(C^\ast(G))$ with the weak*-topology of $C^\ast(G)^\prime$, then $\Phi$ is affine linear and a weak*-continuous homeomorphism.
\end{lem}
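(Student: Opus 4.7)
The plan is to establish the bijection via the GNS construction and then deduce the topological statement from the fact that $L^1(G)\hookrightarrow C^\ast(G)$ is a continuous embedding with dense image.

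First, for a state $\varphi$ of $G$ with GNS data $(\pi_\varphi,\cH_\varphi,\Omega_\varphi)$, I would integrate the unitary representation $\pi_\varphi$ to obtain a non-degenerate *-representation of $L^1(G)$ which extends uniquely to $C^\ast(G)$. Defining $\omega_\varphi(A):=\langle\Omega_\varphi,\pi_\varphi(A)\Omega_\varphi\rangle$ yields a state on $C^\ast(G)$, and for $f\in L^1(G)$ Fubini's theorem gives
\begin{align*}
\omega_\varphi(f) \;=\; \int_G f(x)\,\langle\Omega_\varphi,\pi_\varphi(x)\Omega_\varphi\rangle\,dx \;=\; \int_G f(x)\varphi(x)\,dx,
\end{align*}
matching the formula in the statement. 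Since $L^1(G)$ is dense in $C^\ast(G)$, this formula determines $\omega_\varphi$ uniquely. Conversely, for a state $\omega$ of $C^\ast(G)$, its GNS construction produces a non-degenerate *-representation of $C^\ast(G)$ with cyclic unit vector $\Omega$, which by the standard correspondence (cf. \cite[Prop. 6.2.3]{DE09}) corresponds to a strongly continuous unitary representation $(\pi,\cH)$ of $G$. Then $\varphi(g):=\langle\Omega,\pi(g)\Omega\rangle$ is continuous and positive definite with $\varphi(e)=\|\Omega\|^2=1$, and the integral formula yields $\Phi(\varphi)=\omega$. Injectivity follows because continuity of $\varphi$ together with approximate identities in $L^1(G)$ recovers the pointwise values of $\varphi$ from $\omega_\varphi$.

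Affine linearity of $\Phi$ is immediate from the integral formula, since states of $C^\ast(G)$ are determined by their values on $L^1(G)$.

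For the homeomorphism claim I would use the continuous dense embedding $L^1(G)\hookrightarrow C^\ast(G)$ (from $\|f\|_{C^\ast}\leq\|f\|_1$). If $\varphi_i\to\varphi$ weak* in $L^\infty(G)$, then $\omega_{\varphi_i}(f)\to\omega_\varphi(f)$ for every $f\in L^1(G)$; the uniform bound $\|\omega_{\varphi_i}\|=1$ together with density of $L^1(G)$ in $C^\ast(G)$ then extends convergence to all $A\in C^\ast(G)$ via a standard $\varepsilon/3$-argument, giving continuity of $\Phi$. Continuity of $\Phi^{-1}$ is more direct: if $\omega_{\varphi_i}\to\omega_\varphi$ in $C^\ast(G)^\prime$, then in particular the convergence holds on $L^1(G)\subseteq C^\ast(G)$, which by the integral formula is precisely weak* convergence of $\varphi_i$ to $\varphi$ in $L^\infty(G)\cong L^1(G)^\prime$. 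The main subtlety lies in surjectivity of $\Phi$: one must verify carefully that starting from a state $\omega$ of $C^\ast(G)$ the function $\varphi(g):=\langle\Omega,\pi(g)\Omega\rangle$ really is a continuous state of $G$ whose associated $\omega_\varphi$ equals $\omega$. This rests on the non-unital correspondence between non-degenerate *-representations of $L^1(G)$ (equivalently $C^\ast(G)$) and strongly continuous unitary representations of $G$, and on the convergence of approximate identities in $L^1(G)$ to the identity in the strong operator topology under such representations.
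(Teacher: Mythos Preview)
Your proposal is correct and follows exactly the route the paper indicates: the paper does not give a detailed proof of this lemma but simply remarks (just before the statement) that the bijection of states follows from the one-to-one correspondence between non-degenerate $*$-representations of $L^1(G)$ and unitary representations of $G$ via GNS, citing \cite[Prop.~6.2.3]{DE09}. Your argument spells this out in full, including the topological part via the dense continuous embedding $L^1(G)\hookrightarrow C^\ast(G)$, and there is nothing to add.
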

In order to get a one-to-one correspondence between KMS states of $C^*(G)$ and KMS states of $G$, one needs to define a strongly continuous one-parameter group of automorphisms 
\begin{align*}
\tau:\R\to \text{Aut}(C^*(G))\quad\text{corresponding to}\quad\alpha:\R\to\text{Aut}(G)
\end{align*} 
such that $\omega_\varphi$ is a $(\tau,\beta)$-KMS state of $C^\ast(G)$ if and only if $\varphi$ is an $(\alpha,\beta)$-KMS state of $G$. To do this, we note that the pullback $(\alpha_t)_*\mu_G$ of the Haar measure $\mu_G$ of $G$ is a left-invariant Radon measure and thus there exists a unique scalar $\lambda_{t}\in \R^+$ satisfying 
\begin{align*}
\mu_G\circ \alpha_{-t}=:(\alpha_{t})_*\mu_G=\lambda_{t}\cdot\mu_G\quad \text{for all}\quad t\in\R.
\end{align*} 
It is an easy exercise to show that $\tau:\R\times L^1(G)\to L^1(G),\; \tau_t(f):=\lambda_{t}\cdot(f\circ\alpha_{-t})$ extends to a strongly continuous one-parameter group of *-automorphisms of $C^\ast(G)$ which we also denote by~$\tau$. For $t\in \R$ and $f\in L^1(G)$, one has
\begin{align*}
\pi(\tau_t(f))=\int_G \lambda_{t}\cdot f(\alpha_{-t}(x))\pi(x)dx=\int_G f(x)\pi(\alpha_t(x))dx=(\pi\circ \alpha_t)(f).
\end{align*}
\begin{lem}\label{compstates}
Let $G$ be a locally compact topological group and \rm{$\alpha:\R\to \Aut(G)$} \it{be a strongly continuous one-parameter group of automorphisms. For a state $\varphi:G\to \C$, one has
\begin{align*}
\omega_\varphi(f\ast\tau_t(g))=\int_G\int_G f(y)\, g(x)\,\varphi(y\alpha_t(x))dxdy,\quad\text{for}\quad t\in \R,\;f,g\in L^1(G).
\end{align*}}
\end{lem}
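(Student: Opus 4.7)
The plan is to unfold the left-hand side using the three definitions on the table: the formula $\omega_\varphi(h)=\int_G h(z)\varphi(z)dz$ for $h\in L^1(G)$ from Lemma \ref{affin}, the convolution formula $(f\ast h)(z)=\int_G f(y)h(y^{-1}z)dy$, and the definition $\tau_t(g)(u)=\lambda_t\, g(\alpha_{-t}(u))$. Substituting these in succession gives
\begin{align*}
\omega_\varphi(f\ast\tau_t(g))=\int_G\int_G f(y)\,\lambda_t\, g(\alpha_{-t}(y^{-1}z))\,\varphi(z)\,dy\,dz,
\end{align*}
and the integrand is absolutely integrable, since $f\in L^1(G)$, $\tau_t(g)\in L^1(G)$ (as $\tau_t$ is an $L^1$-isometry), and $\varphi$ is bounded; hence Fubini applies and we may swap the order of integration.

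The heart of the argument is a two-step change of variables in the inner integral. For fixed $y\in G$, first substitute $u:=y^{-1}z$ using left-invariance of the Haar measure to reach $\int_G \lambda_t\, g(\alpha_{-t}(u))\,\varphi(yu)\,du$. Then substitute $x:=\alpha_{-t}(u)$, i.e.\ $u=\alpha_t(x)$; from the defining relation $(\alpha_t)_\ast\mu_G=\lambda_t\mu_G$ one reads off $du=\lambda_t^{-1}dx$, so the two factors of $\lambda_t$ cancel and the inner integral becomes $\int_G g(x)\,\varphi(y\alpha_t(x))\,dx$. Reinserting this into the outer integration over $y$ yields the claimed identity.

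No serious obstacle is expected here; the whole lemma is a straightforward unwinding of definitions. The only thing to watch carefully is the bookkeeping of the modular factor $\lambda_t$, which appears once in $\tau_t(g)$ and is exactly canceled by the Jacobian $\lambda_t^{-1}$ of $u=\alpha_t(x)$. This cancellation is, of course, the whole reason that $\lambda_t$ was built into the definition of $\tau_t$.
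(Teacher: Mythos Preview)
Your proof is correct and follows essentially the same route as the paper's: expand $\omega_\varphi(f\ast\tau_t(g))$ as a double integral, then perform the left translation $u=y^{-1}z$ followed by the substitution $x=\alpha_{-t}(u)$, with the factor $\lambda_t$ cancelling against the Jacobian. The paper presents this more tersely (and opens with the GNS formula $\varphi(x)=\langle\Omega,\pi(x)\Omega\rangle$ rather than Lemma~\ref{affin}), but the computation is the same.
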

\begin{proof}
Let $(\cH,\pi,\Omega)$ be the GNS-representation of $\varphi$ and $\omega_\varphi$. Using $\varphi(x)=\langle \Omega,\pi(x)\Omega\rangle$, we calculate $\omega_\varphi(f\ast\tau_t(g))$ which is given by
\begin{align*}
&\int_G  (f\ast \tau_t(g))(x)\langle \Omega,\pi(x)\Omega\rangle dx=\int_G\int_G f(y)\,\lambda_{t}\, g(\alpha_{-t}(y^{-1}x))\,\varphi(x)dxdy\\
=&\int_G\int_G f(y)\, \lambda_{t}\, g(\alpha_{-t}(x))\,\varphi(yx)dxdy= \int_G\int_G f(y)\, g(x)\,\varphi(y\alpha_t(x))dxdy. \qedhere
\end{align*}
\end{proof}
\begin{prop}\label{grouptoc}
A state $\varphi\in \cS(G)$ is an $(\alpha,\beta)$-KMS state if and only if $\omega_\varphi\in\cS(C^\ast(G))$ is a $(\tau,\beta)$-KMS state in the sense of $C^*$-dynamical systems. \it{In particular the map $\cS(G)\ni\varphi\to \omega_\varphi\in \cS(C^\ast(G))$ restricts to an affine linear homeomorphism of the set of respective KMS states.}
\end{prop}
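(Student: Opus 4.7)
The ``in particular'' statement follows from Lemma~\ref{affin} together with the equivalence to be proved, since $\Phi$ is already established there as an affine weak*-homeomorphism of state spaces. Thus the task reduces to the equivalence of KMS conditions, which I will treat as two implications.

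For the forward direction, assuming $\varphi$ is $(\alpha,\beta)$-KMS, I will verify that $\omega_\varphi$ satisfies the $(\tau,\beta)$-KMS condition on the norm-dense subspace $L^1(G)\subseteq C^\ast(G)$ and then invoke Lemma~\ref{kmsdens}. For $y,x\in G$, let $F^\varphi_{y,x}\colon\ostrip\to\C$ be the strip extension of $t\mapsto\varphi(y\alpha_t(x))$. Since $\varphi$ is positive definite with $\varphi(e)=1$, one has $|\varphi|\leq 1$, so $F^\varphi_{y,x}$ is bounded by $1$ on both boundary lines and, by the three-lines theorem, throughout $\ostrip$. For $f,g\in L^1(G)$ I then define
\begin{equation*}
F_{f,g}(z) := \int_G\int_G f(y)g(x)\,F^\varphi_{y,x}(z)\,dx\,dy.
\end{equation*}
Dominated convergence gives continuity on $\ostrip$, and Morera's theorem combined with Fubini gives holomorphy on the interior. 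Lemma~\ref{compstates} identifies $F_{f,g}(t)=\omega_\varphi(f\ast\tau_t(g))$; the analogous change-of-variable computation using $F^\varphi_{y,x}(t+i\beta)=\varphi(\alpha_t(x)y)$ yields $F_{f,g}(t+i\beta)=\omega_\varphi(\tau_t(g)\ast f)$. Lemma~\ref{kmsdens} then applies.

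For the converse, fix $x,y\in G$ and let $(u_n)$ be a nonnegative approximate identity of $L^1(G)$ with $\|u_n\|_1=1$ and supports shrinking to $\{e\}$. Set $f_n(z):=u_n(x^{-1}z)$ and $g_n(z):=u_n(y^{-1}z)$, so that $\|f_n\|_1=\|g_n\|_1=1$. Lemma~\ref{compstates}, after the substitutions $y'=xz_1$ and $x'=yz_2$, yields
\begin{equation*}
\omega_\varphi(f_n\ast\tau_t(g_n))=\int_G\int_G u_n(z_1)u_n(z_2)\,\varphi(xz_1\alpha_t(yz_2))\,dz_2\,dz_1\longrightarrow\varphi(x\alpha_t(y))
\end{equation*}
pointwise in $t$, by continuity of $\varphi$ and $\alpha$; an analogous calculation gives $\omega_\varphi(\tau_t(g_n)\ast f_n)\to\varphi(\alpha_t(y)x)$. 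Let $F_n\colon\ostrip\to\C$ be the KMS strip extensions furnished by $\omega_\varphi$. Because $\tau_t$ is a $C^\ast$-isometry, $\|f_n\ast\tau_t(g_n)\|_{C^\ast}\leq 1$, so $|F_n|\leq 1$ on both boundary lines and hence on $\ostrip$ by three-lines. Montel's theorem extracts a locally uniformly convergent subsequence with a holomorphic limit on the interior. Passing the Poisson integral representation of each $F_n$ (in terms of its two boundary values) to the limit via dominated convergence produces a function $F$ on $\ostrip$ which on the interior equals the Poisson integral of the continuous boundary data $\varphi(x\alpha_t(y))$ and $\varphi(\alpha_t(y)x)$, hence extends continuously to $\ostrip$ with precisely these values. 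This is the required strip extension for the $(\alpha,\beta)$-KMS condition for $\varphi$.

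The technical heart lies in the reverse direction: upgrading locally uniform convergence on the interior of the strip to a continuous extension on $\ostrip$ with prescribed boundary values. The Poisson integral for the strip handles this cleanly, although a normal-families argument using Vitali combined with a Phragmén--Lindelöf bound on differences of approximants is an equally valid alternative.
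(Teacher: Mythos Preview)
Your proof is correct, but both directions follow a genuinely different route from the paper.

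For the forward implication the paper does not construct the strip functions $F_{f,g}$ directly. Instead it invokes the Fourier-transform characterization of KMS states \cite[Prop.~5.3.12]{BR96}, reducing the problem to showing
\[
\int_\R \widehat{\psi}(t)\,\omega_\varphi(f\ast\tau_t(g))\,dt=\int_\R \widehat{\psi}(t+i\beta)\,\omega_\varphi(\tau_t(g)\ast f)\,dt
\]
for $\psi\in C_c^\infty(\R)$, and then establishes this identity by a contour-shift argument on rectangles in the strip, using Paley--Wiener decay of $\widehat{\psi}$ to kill the vertical sides. Your approach of integrating the pointwise strip extensions $F^\varphi_{y,x}$ against $f\otimes g$ and applying Lemma~\ref{kmsdens} is more elementary and arguably cleaner; the three-lines bound plus dominated convergence and Morera handles everything without the Fourier detour.

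For the converse the contrast is sharper. The paper avoids approximation and complex analysis altogether: it quotes \cite[Prop.~5.3.4]{BR96} to extend $\tau$ to a $\sigma$-weakly continuous automorphism group $\widehat{\tau}$ of $\pi_\varphi(G)^{\prime\prime}$ for which the vector state $\widehat{\omega}_\varphi$ is $(\widehat{\tau},\beta)$-KMS in the W*-sense, checks via a Dirac net that $\widehat{\tau}_t(\pi_\varphi(x))=\pi_\varphi(\alpha_t(x))$, and then reads off the strip extension for $\varphi(x\alpha_t(y))$ directly from the W*-KMS condition applied to the unitaries $\pi_\varphi(x),\pi_\varphi(y)$. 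Your route via approximate identities, Montel, and the Poisson representation for the strip is self-contained and avoids citing the W*-extension machinery, at the price of a more delicate limiting argument. Both are valid; the paper's version trades analytic work for structural input from \cite{BR96}.
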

\begin{proof}
Suppose $\varphi$ is an $(\alpha,\beta)$-KMS state of $G$. We first show that $\omega_\varphi$ is a $(\tau,\beta)$-KMS state. [Prop. 5.3.12, BR96] implies that it suffices to show
\begin{align*}
\int_\R \widehat{\psi}(t)\,\omega_\varphi(f\ast\tau_t(g))dt=\int_\R \widehat{\psi}(t+i\beta)\,\omega_\varphi(\tau_t(g)\ast f)dt,
\end{align*}
for all $\psi\in C^\infty_c(\R)$ and $f,g\in L^1(G)$.\\

Fix $\psi\in C_c^\infty(\R)$. The Paley-Wiener Theorem \cite[Prop. 5.3.11]{BR96} implies that, for $R>0$ and $\text{supp}(\psi)\subseteq~[-R,R]$, the function $\widehat{\psi}$ is entire analytic and for every $n\in \N_0$ there exist constants $C_n>0$ such that
\begin{align*}
|\widehat{\psi}(z)|\leq C_n(1+|z|)^{-n}\exp(R\cdot|\text{Im}(z)|),\quad\text{for}\quad z\in \C.
\end{align*} 
This proves that the map $t\mapsto \widehat{\psi}(t)\varphi(y\alpha_{t}(x))$ extends to continuous function $F$ on the strip $\overline{\cS_\beta}$ which is holomorphic on the interior. Choose, for $r>0$, a piecewise smooth curve $\gamma_r$ which parametrizes the rectangle of width depending on $r$ and fixed height $\beta$ depicted below:
\begin{align*}
\begin{tikzpicture}[scale=1.5]
\draw[->] (-1.75,0) -- (1.75,0);
\draw[->] (0,-0.5) -- (0,1.75);
\draw (-1.0,0) rectangle (1,1);
\draw[step=.25cm,gray,very thin] (-1.5,-0.5) grid (1.5,1.5);
\draw (-1 cm,-1pt) -- (-1 cm,1pt) node[anchor=north,]{$-r$};
\draw (1 cm,-1pt) -- (1 cm,1pt)node[anchor=north, ]{$r$};
\draw[thick] (-1pt, 1 cm) -- (1pt, 1 cm) node[anchor=east, above left=0.25pt]{$i\beta$};
\begin{scope}[thick, every node/.style={sloped,allow upside down}]
  \draw (-1,0)-- node {\midarrow} (1,0);
  \draw (1,0)-- node {\midarrow} (1,1);
  \draw (1,1)-- node {\midarrow} (-1,1);
  \draw (-1,1)-- node {\midarrow} (-1,0);
\end{scope}
\end{tikzpicture}
\end{align*}
Applying \cite[Prop. 5.3.5]{BR96} to the continuous function $F$ on $\overline{\cS_\beta}$ which is holomorphic on the interior yields
\begin{align*}
F(t+is\beta)\leq |\widehat{\psi}(t+is\beta)|\cdot\sup\{|\varphi(x\alpha_t(y))|,|\varphi(\alpha_t(y)x)|\}\leq C_n(1+\beta^2+t^2)^{-n}\exp (\beta R).
\end{align*}
This proves that the restriction of $F(z)$ to the right and left side of the rectangle parametrized above converges uniformly to zero for $r\to \infty$ since it is given by $|F(\pm r+is\beta)|$, for $s\in[0,1]$. The length of the left and right side of the rectangle is finite and thus the integrals over the right and left sides tend to zero for $r\to \infty$. The integrals of $F(z)$ over the contractible curves $\gamma_r$ vanish as $F$ is holomorphic which implies
\begin{align*}
0=\lim_{r\to\infty}\int_{\gamma_r}F(z)dz=\int_{-\infty}^\infty \underbrace{\widehat{\psi}(t)\varphi(y\alpha_{t}(x))}_{F(t)}dt-\int_{-\infty}^{\infty}\underbrace{\widehat{\psi}(t+i\beta)\varphi(\alpha_{t}(x)y)}_{F(t+i\beta)}dt.
\end{align*}
To see that $\omega_\varphi$ is a $(\tau,\beta)$-KMS state, we apply Lemma \ref{compstates} and obtain
\begin{align*}
&\int_\R \widehat{\psi}(t)\,\omega_\varphi(f\ast\tau_t(g))dt=\int_G\int_G f(y)\, g(x)\int_\R\widehat{\psi}(t)\,\varphi(y\alpha_{t}(x))dt dxdy\\
=&\int_G\int_Gf(y)\, g(x)\,\int_\R\widehat{\psi}(t+i\beta)\varphi(\alpha_{t}(x)y)dtdxdy
=\int_\R \widehat{\psi}(t+i\beta)\,\omega_\varphi(\tau_t(g)\ast f)dt.
\end{align*}
Suppose conversely that $\omega_\varphi$ is a $(\tau,\beta)$-KMS state. Then \cite[Prop. 5.3.4]{BR96} implies that there exists a unique $\sigma$-weakly continuous one-parameter group of automorphisms $\widehat{\tau}$ of $\pi_{\omega_\varphi}(C^\ast(G))^{\prime\prime}$ such that $\widehat{\tau}_t(\pi_{\omega_\varphi}(A))=\pi_{\omega_\varphi}(\tau_t(A))\quad\text{for all}\quad A\in C^\ast(G)$ and the state 
\begin{align*}
\widehat{\omega}_\varphi:\pi_{\omega_\varphi}(C^\ast(G))^{\prime\prime}\to\C,\;A\mapsto\langle \Omega_\varphi,A\Omega_\varphi\rangle
\end{align*} 
is a $(\widehat{\tau},\beta)$-KMS state in the sense of a W*-dynamical system. In particular
\begin{align*}
\int_G f(x) \widehat{\tau}_t(\pi_\varphi(x))dx=(\widehat{\tau}_t\circ\pi_{\omega_\varphi})(f)=\pi_{\omega_\varphi}(\tau_t(f))=\int_G f(x)\pi_\varphi(\alpha_t(x))dx,
\end{align*}
implies that $\widehat{\tau}_t(\pi_\varphi(x))=\pi_\varphi(\alpha_t(x))$, for all $x\in G$, as one can approximate with left shifts of a Dirac net. Therefore
\begin{align*}
\varphi(x\alpha_t(y))=\langle \Omega_\varphi,\pi_\varphi(x)\pi_\varphi(\alpha_t(y))\Omega_\varphi\rangle =\langle \Omega_\varphi,\pi_\varphi(x)\widehat{\tau}_t(\pi_\varphi(y))\Omega_\varphi\rangle =\widehat{\omega}_\varphi(\pi_\varphi(x)\widehat{\tau}_t(\pi_\varphi(y)))
\end{align*}
follows which proves that $\varphi$ is an $(\alpha,\beta)$-KMS state.
\end{proof}
Now we have translated all the necessary information from KMS states of groups to KMS states of a C*-algebra to prove the following theorem.
\begin{thm}\label{convex}
Let $G$ be a locally compact topological group and \rm{$\alpha:\R\to \text{Aut}(G)$} \textit{be a strongly continuous one-parameter group of automorphisms. Then the set $K_\beta\subseteq \cS(G)$ of $(\alpha,\beta)$-KMS states satisfies}
\begin{itemize}
\item[(i)] \textit{$K_\beta$ is convex and weak-$\ast$-compact with respect to the subspace topology of 
\begin{align*}
\cS(G)\subseteq L^\infty(G)\cong L^1(G)^\prime.
\end{align*}}
\item[(ii)] \textit{$K_\beta$ is a simplex} (cf. Remark \ref{simplex}).
\item[(iii)] \textit{$\varphi\in K_\beta$ is an extremal point of $K_\beta$ if and only if $\varphi$ is a factor state.}
\item[(iv)]$K_\beta=\overline{\text{Conv(Ext}(K_\beta))}$, \textit{where} $\text{Ext}(K_\beta)$ \textit{are the extremal points in $K_\beta$ and the closure is with respect to the weak-$\ast$-topology.}
\end{itemize}
\end{thm}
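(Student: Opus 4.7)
My plan is to transfer all four statements to the corresponding well-known statements about KMS states of C*-dynamical systems and invoke Bratteli--Robinson (essentially \cite[Thm. 5.3.30]{BR96}). The bridge is the map $\Phi:\cS(G)\to\cS(C^\ast(G))$, $\varphi\mapsto\omega_\varphi$ from Lemma \ref{affin}, which is an affine weak-$\ast$-homeomorphism, together with Proposition \ref{grouptoc}, which says that $\Phi$ restricts to a bijection $K_\beta \to K_\beta^{C^\ast}$, where $K_\beta^{C^\ast}$ denotes the set of $(\tau,\beta)$-KMS states of the C*-dynamical system $(C^\ast(G),\tau,\R)$.

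For part (i), convexity of $K_\beta$ is immediate from the linearity of the conditions in Definition \ref{defkmsgrp} (a convex combination of extensions is an extension). Weak-$\ast$-compactness: one shows $K_\beta^{C^\ast}$ is weak-$\ast$-closed inside the state space of $C^\ast(G)$, and hence compact once one accounts for the fact that KMS states do not escape to the zero functional (they are $\tau$-invariant, and one can use the standard argument via the reflection relation applied to an approximate identity to see that any weak-$\ast$ limit point still has norm one). Closedness of $K_\beta^{C^\ast}$ follows from passing to limits in the Fourier-transformed KMS condition used in the proof of Proposition \ref{grouptoc}. Transferring via the homeomorphism $\Phi$ then yields compactness of $K_\beta$ in the given topology, and part (iv) is then an immediate consequence of the Krein--Milman theorem.

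For parts (ii) and (iii), the essential input is \cite[Thm. 5.3.30]{BR96}, which asserts that the set of $(\tau,\beta)$-KMS states of a C*-dynamical system is a Choquet simplex whose extremal points are exactly the factor states (i.e.\ those whose GNS-generated von Neumann algebra is a factor). To transfer statement (iii) across $\Phi$, I need the correspondence between factoriality for $\varphi$ and for $\omega_\varphi$; this follows because the GNS-representation of $\omega_\varphi$ on $C^\ast(G)$ integrates to the GNS-representation of $\varphi$ on $G$ (via the universal property), so $\pi_{\omega_\varphi}(C^\ast(G))^{\prime\prime}=\pi_\varphi(G)^{\prime\prime}$. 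Statement (ii), the simplex property, then passes through $\Phi$ because $\Phi$ is affine and a homeomorphism on the relevant compact convex sets, hence preserves the decomposition-uniqueness condition characterizing simplices.

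I expect the only genuine friction to be bookkeeping around (i): verifying that weak-$\ast$ limits of KMS states remain normalized, especially when $C^\ast(G)$ is non-unital (which happens whenever $G$ is non-discrete). The cleanest remedy is to pass to the unitization $\widetilde{C^\ast(G)}$, observe that KMS states extend uniquely, and apply Banach--Alaoglu there; alternatively one can use $\tau$-invariance of KMS states together with the fact that $\varphi(e)=1$ persists under weak-$\ast$ limits in $L^\infty(G)$ when tested against a suitable approximate identity in $L^1(G)$. Everything else is a formal transfer through the affine weak-$\ast$-homeomorphism $\Phi$.
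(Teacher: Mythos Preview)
Your proposal is correct and follows essentially the same route as the paper: transfer via the affine weak-$\ast$-homeomorphism $\Phi$ from Lemma \ref{affin} and Proposition \ref{grouptoc}, handle the non-unital case by passing to the unitization $C^\ast(G)_+$ (exactly as the paper does, invoking \cite[p.~116]{BR96}), apply \cite[Thm.~5.3.30]{BR96} to obtain (i)--(iii), and deduce (iv) from Krein--Milman. The ``friction'' you anticipate around normalization in (i) is precisely the point the paper resolves by the unitization step, so your remedy matches theirs.
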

\begin{proof}
Lemma \ref{affin} and Proposition \ref{grouptoc} imply that we may view $K_\beta$ as the set of $(\tau,\beta)$-KMS states of $\cA:=C^\ast(G)$. Then \cite[p. 116]{BR96} shows that the $(\tau,\beta)$-KMS states are in one-to-one correspondence to the $(\tau_+,\beta)$-KMS states of the unitization $\cA_+:=\cA\oplus\C$, where $\tau_+$ acts on $\cA_+$ via 
\begin{align*}
\tau_+:\R\times \cA_+\to \cA_+,\quad (t,(A,\lambda))\mapsto(\tau_t(A),\lambda).
\end{align*} 
Then Theorem \cite[Thm. 5.3.30]{BR87} implies (i)-(iii) for the subset of $(\tau_+,\beta)$-KMS states. Since the correspondence was established via the affine linear homeomorphism with respect to the weak-$\ast$-topology given by
\begin{align*}
K_\beta\ni\omega\mapsto \omega_+:(A,\lambda)\mapsto \omega(A)+\lambda,
\end{align*} 
it follows that (i)-(iii) also hold for $K_\beta$. Then (iv) is a direct consequence of (i) and the Krein-Milman Theorem.
\end{proof}
\begin{rem}\label{simplex}
\rm{For the compact convex subset $K_\beta$ of the locally convex topological vector space $C^\ast(G)^\prime$ equipped with the weak-$\ast$-topology \cite[Subsection 4.1]{BR87} provides a decomposition theory. For a given Radon probability measure $\mu$ on $K_\beta$, \cite[Prop. 4.1.1]{BR87} proves that there exists a unique point $b(\mu)=\int_K \omega\, d\mu(\omega)\in K_\beta$ called the \textit{barycenter} of $\mu$, such that
\begin{align*}
 f(b(\mu))=\mu(f):=\int_{K}f(\omega)d\mu(\omega),
\end{align*}
for all affine linear continuous functions $f:K_\beta\to \R$. Then \cite[Thm. 4.1.15]{BR87} establishes the importance of $K_\beta$ being a simplex  since it implies that every $\omega\in K$ is the barycenter of a unique normalized Radon measure $\mu_\omega$ maximal with respect to the partial order $\succ$ defined by
\begin{align*}
\mu\succ \nu \quad :\Leftrightarrow\quad \mu(f)\geq \nu(f),\quad \text{for all}\; f:K\to \R\; \text{continuous and convex.}
\end{align*}
For separable locally compact topological groups $G$ the convolution algebra $L^1(G)$ is separable which shows that $C^\ast(G)$ is a separable C*-algebra. In particular, $K_\beta$ is metrizable and thus \cite[Thm. 4.1.11]{BR87} implies that $\mu_\omega$ is supported on the extremal points of $K_\beta$, i.e. $\mu_\omega(\text{Ext}(K_\beta))=1$. We thus obtain for each $\omega\in K_\beta$ a unique decomposition
\begin{align*}
\omega=\int_{\text{Ext}(K_\beta)}\omega^\prime d\mu_\omega(\omega^\prime).
\end{align*}
For $f\in L^1(G)$, the function $\text{Ext}(K_\beta)\times G\ni (\omega,x)\mapsto f(x)\varphi_{\omega}(x)$ is clearly integrable, the measure space $\text{Ext}(K_\beta)\times G$ is $\sigma$-finite and thus from Fubini's theorem, one obtains, for all $\omega_0\in K_\beta$, that
\begin{align*}
\omega_0(f)=\int_{\text{Ext}(K_\beta)}\int_G f(x)\varphi_\omega(x)dxd\mu_{\omega_0}(\omega)=\int_G f(x)\int_{\text{Ext}(K_\beta)}\varphi_\omega(x)d\mu_{\omega_0}(\omega)dx.
\end{align*}
This implies that the state $\varphi_{\omega_0}$ of $G$ corresponding to $\omega_0$ is given by
\begin{align*}
\varphi_{\omega_0}(x)=\int_{\text{Ext}(K_\beta)}\varphi_\omega(x)d\mu_{\omega_0}(\omega),\quad\text{for}\quad x\in G.
\end{align*}
 Since $L^1(G)\subseteq C^\ast(G)$ is dense, it follows the evaluation functionals 
\begin{align*}
\delta_f:K_\beta \to \C,\quad \omega \mapsto \omega(f),\qfor f\in L^1(G),
\end{align*} 
separate the points and thus the algebra generated by these functionals is dense in $C(K_\beta)$ with respect to the maximum norm by the Theorem of Stone-Weierstrass. This proves that $\mu_{\omega_0}$ is already uniquely determined by these functionals. As the set $K_\beta$ is convex and weak-$\ast$-closed every such measure defines an $(\alpha,\beta)$-KMS state. Hence we have proven the following proposition:
}
\end{rem}
\begin{prop}\label{bary}
Let $G$ be a separable locally compact topological group, \rm{$\alpha:\R\to \text{Aut}(G)$} \textit{be a strongly continuous one-parameter group and $\beta\neq 0$. For each $\varphi_0\in K_\beta$, there exists a Radon probability measure $\mu$ on} $\text{Ext}(K_\beta)$ \textit{uniquely determined by}
\begin{align*}
\varphi_0(x)=\int_{\text{Ext}(K_\beta)}\varphi(x)d\mu(\varphi),\quad for\; all\quad x\in G.
\end{align*}
\textit{Conversely every such measure defines an $(\alpha,\beta)$-KMS state of $G$.}
\end{prop}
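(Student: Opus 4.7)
The plan is to reduce this statement to a Choquet-type decomposition on $K_\beta$ viewed as a simplex of states on the group C*-algebra $C^\ast(G)$. The correspondence $\varphi\mapsto \omega_\varphi$ of Lemma \ref{affin} and Proposition \ref{grouptoc} is an affine weak-$\ast$-homeomorphism between $K_\beta$ and the set of $(\tau,\beta)$-KMS states of $C^\ast(G)$; moreover, by Theorem \ref{convex}(i)--(ii) this set is a weak-$\ast$-compact convex simplex. Separability of $G$ implies separability of $L^1(G)$, hence of $C^\ast(G)$, so the weak-$\ast$-topology on $K_\beta$ is metrizable. Choquet's theorem for metrizable compact convex simplices (cited in Remark \ref{simplex} as \cite[Thm. 4.1.11]{BR87}) then produces, for each $\varphi_0\in K_\beta$, a Radon probability measure $\mu$ on $K_\beta$ supported on $\text{Ext}(K_\beta)$ and having $\varphi_0$ as its barycenter.

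Next, I would translate this barycentric relation, a priori a statement about affine continuous functions on $K_\beta$, into the pointwise formula in the proposition. For $f\in L^1(G)$ the evaluation $\delta_f:\omega\mapsto \omega(f)$ is affine and weak-$\ast$-continuous, so
\begin{align*}
\omega_{\varphi_0}(f)=\int_{\text{Ext}(K_\beta)}\omega_\varphi(f)\,d\mu(\varphi)=\int_{\text{Ext}(K_\beta)}\int_G f(x)\varphi(x)\,dx\,d\mu(\varphi).
\end{align*}
Since $(\varphi,x)\mapsto f(x)\varphi(x)$ is measurable and majorized by $|f(x)|$ on the $\sigma$-finite product space, Fubini's theorem allows exchange of the order of integration, yielding $\int_G f(x)\bigl(\int_{\text{Ext}(K_\beta)}\varphi(x)\,d\mu(\varphi)\bigr)dx$. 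As this equals $\int_G f(x)\varphi_0(x)\,dx$ for every $f\in L^1(G)$, and both integrands are continuous in $x$, the two continuous functions on $G$ agree pointwise, which is the desired integral representation.

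For uniqueness of $\mu$, I would invoke the Stone--Weierstrass argument sketched in Remark \ref{simplex}: density of $L^1(G)$ in $C^\ast(G)$ ensures that the family $\{\delta_f\mid f\in L^1(G)\}$ separates points of $K_\beta$, so the unital $\ast$-subalgebra it generates is uniformly dense in $C(K_\beta)$. Two Radon probability measures on $K_\beta$ that agree on all $\delta_f$ therefore coincide, establishing that $\mu$ is determined by the pointwise formula alone. For the converse direction, any Radon probability measure $\mu$ on the compact set $\text{Ext}(K_\beta)\subseteq K_\beta$ has a barycenter in the weak-$\ast$-closed convex set $K_\beta$ by \cite[Prop. 4.1.1]{BR87}, and the same Fubini computation identifies this barycenter with the state defined by the integral formula.

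The main technical obstacle is the interplay between the abstract Choquet barycenter, which a priori only controls values of affine continuous functions on $K_\beta$, and the concrete pointwise identity on $G$ demanded by the statement. Bridging this gap requires the Fubini step, where separability of $G$ is essential to remain within $\sigma$-finite measures, followed by the passage from an $L^1$-pairing identity to a pointwise identity via continuity of the states involved. The uniqueness claim and the converse assertion are then essentially density and continuity arguments, and should be routine once the representation formula itself is in place.
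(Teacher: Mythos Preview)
Your proposal is correct and follows essentially the same route as the paper: metrizability of $K_\beta$ from separability of $C^\ast(G)$, Choquet's theorem for the existence of a boundary measure, Fubini to pass from the barycentric identity on $L^1(G)$ to the pointwise formula, Stone--Weierstrass for uniqueness, and the barycenter-in-closed-convex-set argument for the converse. One small caution: $\text{Ext}(K_\beta)$ need not be compact (only $G_\delta$ in the metrizable case), so in the converse step you should view $\mu$ as a measure on $K_\beta$ concentrated on $\text{Ext}(K_\beta)$ rather than invoke compactness of the extreme boundary.
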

\subsection{Semi-finite KMS states of topological groups}\label{subssemif}
Consider an $(\alpha,\beta)$-KMS state $\varphi$ of a topological group $G$. If $\cM_\varphi=\pi_\varphi(G)^{\prime\prime}$ is a semifinite von Neumann algebra, then there exists a unique faithful semi-finite normal trace $\tau$ on $\cM_\varphi$ (cf. \cite[Thm. 2.15]{Tak02}). Since the faithful normal state $\omega_\varphi$ is invariant under the modular automorphism group $\sigma_t^{\tau}=\text{id}_{\cM_\varphi}$ of the trace $\tau$, it follows from the Radon-Nikodym Theorem \cite[Cor. 3.6]{Tak03} that there exists a unique non-singular positive self-adjoint operator $h$ affiliated to $\cM_\varphi$ such that $\omega_\varphi(A)=\tau(hA)$, for $A\in \cM_\varphi$. Then \cite[Thm. 2.11]{Tak03} implies that the modular automorphism group of $\omega_\varphi$ is given by 
\begin{align*}
\text{Ad}(h^{-it/\beta})=\text{Ad}(e^{itH}),\qfor H=-\tfrac{1}{\beta}\log(h).
\end{align*} 
Furthermore one has $\tau(h)=\tau(e^{-\beta H})=\omega_\varphi(\mathds{1})=1$ and $\varphi$ is given by
\begin{align*}
\varphi(g)=\frac{\tau(\pi_\varphi(g)e^{-\beta H})}{\tau(e^{-\beta H})},\qfor g\in G.
\end{align*}
Suppose conversely that $\pi$ is an $\alpha$-covariant cyclic unitary representation such that $\pi(G)^{\prime\prime}$ is semi-finite and for which there exists a non-singular implementing operator $H$ of $\alpha$ affiliated to $\pi(G)^{\prime\prime}$ which satisfies $\tau(e^{-\beta H})=1$, for the unique faithful semi-finite normal trace $\tau$ on $\cM_\varphi=\pi(G)^{\prime\prime}$. We want to show that this defines an $(\alpha,\beta)$-KMS state of $G$ by constructing a $(\sigma,\beta)$-KMS state, where $\sigma_t(A):=e^{itH}Ae^{-itH}$ for $A\in \cM_\varphi$. To do this, observe that \cite[Thm. 2.22]{Tak02} implies that the semi-finite von Neumann algebra $\cM_\varphi\otimes \cM_\varphi^\text{op}$ acts on $L^2(\cM_\varphi,\tau)$, which is defined as the completion of the complex pre-Hilbert space
\begin{align*}
\{A\in \cM_\varphi\;|\;\tau(|A|^2)<\infty\}
\end{align*}
with respect to the scalar product $\langle A,B\rangle:=\tau(A^\ast B)$, by left and right multiplication. If one denotes 
\begin{align*}
\lambda:\cM_\varphi\to B(L^2(\cM_\varphi,\tau)),\quad &\lambda(a)x=ax,\qfor \tau(|x|^2)<\infty\\
\text{and}\quad \rho:\cM_\varphi^\text{op}\to B(L^2(\cM_\varphi,\tau)),\quad &\rho(a)x=xa,\qfor \tau(|x|^2)<\infty,
\end{align*} 
one has that $\lambda(\cM_\varphi)^\prime =\rho(\cM_\varphi)^{\prime\prime}$ and $\rho(\cM_\varphi)^\prime =\lambda(\cM_\varphi)^{\prime\prime}$ (cf.  \cite[Thm. 2.22]{Tak02}).\\

In particular if $\Omega=e^{-\tfrac{1}{2}\beta H}$, then $\Omega\in L^2(\cM_\varphi,\tau)$ is non-singular and self-adjoint. This implies that the vector $\Omega$ is cyclic and separating for $\cM_\varphi$ because by the Range-Kernel Lemma one has $\overline{\cR(\Omega)}=\text{ker}(\Omega^\ast)=\text{ker}(\Omega)=~\{0\}$ and therefore $A\Omega=0$ implies $A=0$ and $\Omega B=0$ implies $(\Omega B)^\ast=B^\ast\Omega=0$, i.e. $B^\ast=0$ and thus $B=0$, for $A, B\in \cM_\varphi$.\\

The corresponding modular objects $(\Delta,J)$ are given on $\lambda(\cM)\Omega$ by $J(A\Omega)=\Omega A^\ast$ and $\Delta(A\Omega)=\Omega A$, for $A\in \Omega$ and thus the corresponding modular automorphism group is given by
\begin{align*}
\sigma_t:\cM\to \cM,\quad A\mapsto e^{itH}Ae^{-itH},\qfor t\in \R.
\end{align*}
This implies that the state
\begin{align*}
\omega:\cM\to \C,\quad A\mapsto \frac{\tau(Ae^{-\beta H})}{\tau(e^{-\beta H})}=\frac{\langle \Omega,A\Omega\rangle}{\|\Omega\|^2}
\end{align*} 
is a $(\sigma,\beta)$-KMS state. This proves that $\varphi:G\to \C,\;g\mapsto \omega(\pi(g))$ is an $(\alpha,\beta)$-KMS state. Hence one has the following proposition:
\begin{prop}
Let $G$ be a topological group, {\rm~$\alpha:\R\to \text{Aut}(G)$} be a strongly continuous one-parameter group of automorphisms. There is a one-to-one correspondence between $(\alpha,\beta)$-KMS states $\varphi$ such that $\pi_\varphi(G)^{\prime\prime}$ is a semi-finite von Neumann algebras and $\alpha$-covariant cyclic unitary representation $\pi$ such that $\pi_\varphi(G)^{\prime\prime}$ is a semi-finite for which there exists an implementing operator $H$ of $\alpha$ affiliated to $\pi(G)^{\prime\prime}$ which satisfies $\tau(e^{-\beta H})=1$, for the unique faithful semi-finite normal trace $\tau$ on $\pi(G)^{\prime\prime}$.
\end{prop}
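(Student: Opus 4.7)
The plan is to establish the bijection in both directions by exploiting Tomita--Takesaki modular theory on the von Neumann algebra $\cM_\varphi:=\pi_\varphi(G)^{\prime\prime}$, using semi-finiteness to reduce the KMS condition to a property of a single positive self-adjoint operator affiliated to $\cM_\varphi$. The forward direction passes from $\varphi$ to a density operator via a non-commutative Radon--Nikodym theorem; the reverse direction reconstructs a KMS state from the standard form of $\cM_\varphi$.

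For the forward direction, begin with a semi-finite $(\alpha,\beta)$-KMS state $\varphi$ and fix the (essentially unique) faithful semi-finite normal trace $\tau$ on $\cM_\varphi$. By Corollary \ref{invar}, $\omega_\varphi$ is $\alpha$-invariant, hence in particular invariant under the (trivial) modular group of $\tau$, so Takesaki's Radon--Nikodym theorem yields a unique non-singular positive self-adjoint operator $h$ affiliated to $\cM_\varphi$ with $\omega_\varphi(A)=\tau(hA)$ for all $A\in\cM_\varphi$. The cocycle derivative formula then identifies the modular automorphism group of $\omega_\varphi$ with $\mathrm{Ad}(h^{it})$. Comparing with the strongly continuous one-parameter group implementing $\alpha$ on $\cH_\varphi$ and rescaling, one obtains a self-adjoint $H=-\tfrac{1}{\beta}\log h$, affiliated to $\cM_\varphi$, implementing $\alpha$ and satisfying the normalization $\tau(e^{-\beta H})=\omega_\varphi(\mathds{1})=1$.

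For the reverse direction, assume $\pi$ is a cyclic $\alpha$-covariant representation with semi-finite $\cM:=\pi(G)^{\prime\prime}$, and suppose $H$ is affiliated to $\cM$ implementing $\alpha$ with $\tau(e^{-\beta H})=1$. Pass to the standard form: $\cM$ acts on $L^2(\cM,\tau)$, and the vector $\Omega:=e^{-\tfrac{1}{2}\beta H}$ lies in $L^2(\cM,\tau)$ by assumption. Since $\Omega$ is positive and non-singular, the Range--Kernel Lemma shows that $\Omega$ is cyclic and separating for $\cM$. A direct calculation on $\cM\Omega$ identifies the modular conjugation as $J(A\Omega)=\Omega A^*$ and $\Delta^{it}$ as $\mathrm{Ad}(e^{itH})$, so the modular automorphism group of the vector state $\omega:A\mapsto\langle\Omega,A\Omega\rangle/\|\Omega\|^2$ coincides with the dynamics. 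Thus $\omega$ is $(\sigma,\beta)$-KMS, and pulling back through $\pi$ gives an $(\alpha,\beta)$-KMS state $\varphi$ of $G$. Uniqueness of $H$ in the forward construction and the explicit formula $\varphi(g)=\tau(\pi(g)e^{-\beta H})$ in the reverse one make the correspondence a genuine bijection.

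The main technical obstacle is handling unbounded operators affiliated to $\cM$ in the merely semi-finite (as opposed to finite) setting: one must ensure that $e^{-\beta H}$ is a positive operator with finite trace in Takesaki's sense, that $\Omega=e^{-\tfrac{1}{2}\beta H}\in L^2(\cM,\tau)$ lives in the natural positive cone, and that modular-theoretic identities valid for bounded elements extend to these affiliated objects via standard measurable-operator arguments. The second delicate point is proving the non-singularity of $h$ in the Radon--Nikodym step, which follows from faithfulness of $\omega_\varphi$ on $\cM_\varphi$ (itself a consequence of the cyclicity of $\Omega_\varphi$ together with the tracial structure).
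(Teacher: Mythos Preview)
Your proposal is correct and follows essentially the same route as the paper: the forward direction uses the Radon--Nikodym theorem for the faithful normal state $\omega_\varphi$ relative to the trace $\tau$ (whose modular group is trivial) to produce the density $h=e^{-\beta H}$, and the reverse direction realizes $\cM$ in its standard form on $L^2(\cM,\tau)$, takes $\Omega=e^{-\tfrac{1}{2}\beta H}$ as a cyclic and separating vector, and reads off the modular automorphism group directly. One small remark: in your last paragraph you derive non-singularity of $h$ from faithfulness of $\omega_\varphi$, which you attribute to cyclicity of $\Omega_\varphi$ ``together with the tracial structure''; in fact $\Omega_\varphi$ is automatically separating for any KMS state (cf.\ \cite[Cor.~5.3.9]{BR96}), independent of semi-finiteness, so the argument is simpler than you suggest.
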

\begin{rem}\label{typeI}
\rm{If $\varphi$ is a factorial type I $(\alpha,\beta)$-KMS state, then $\cM_\varphi=\pi_\varphi(G)^{\prime\prime}$ is a type I von Neumann algebra and therefore there exists a Hilbert space $\cH$ and an isomorphism of von Neumann algebras $\Phi:\cM_\varphi\to B(\cH)$. The unitary representation $\rho:=\Phi\circ \pi_\varphi$ is irreducible since the $\sigma$-weak closure of $\spann_\C(\rho(G))$ is given by $\rho(G)^{\prime\prime}=\Phi(\cM_\varphi)=B(\cH)$. This proves that there exists an operator $H$ on $\cH$ such that 
\begin{align*}
e^{itH}\rho(g)e^{-itH}=\rho(\alpha_t(g))\qand\tr(e^{-\beta H})<\infty
\end{align*} 
since the GNS-representation of $\varphi$ is $\alpha$-covariant for the modular automorphism group, i.e. $\Delta^{it}\pi_\varphi(g)\Delta^{-it}=\pi_\varphi(\alpha_t(g))$ and $\Phi\circ\pi_\varphi=\rho$. In particular
\begin{align*}
\varphi:G\to \C,\quad \frac{\tr(\rho(g)e^{-\beta H})}{\tr(e^{-\beta H})},\qfor g\in G,
\end{align*} 
is a Gibbs state corresponding to the irreducible unitary representation $\rho$ of $G$. Since the representation $\rho$ is unique, the implementing operator $H$ is unique up to the addition of a real scalar.
}
\end{rem}
\begin{rem}\label{extend}
\rm{By \cite[Thm. 3.14]{Tak03} a von Neumann algebra $\cM$ is semi-finite if and only if for every faithful semi-finite normal weight $\psi$, the corresponding modular automorphism group $\sigma_t^\psi$ is inner in the sense that there exists a self-adjoint operator $H$ affiliated to $\cM$ such that $\sigma_t^\psi=\text{Ad}(e^{itH})$ as automorphisms of $\cM$. For KMS states of topological groups, this innerness of the modular automorphism group translates to the extension of the factorial KMS state to the larger group $G^\flat:=G\rtimes_\alpha\R$. Let $\varphi$ be an $(\alpha,\beta)$-KMS state such that $\pi_\varphi(G)^{\prime\prime}$ is a semi-finite von Neumann algebra and $H$ the corresponding implementing operator affiliated to $\cM_\varphi:=\pi_\varphi(G)^{\prime\prime}$ satisfying $\tau(e^{-\beta H})<\infty$. It follows that the unitary representation 
\begin{align*}
\pi_\varphi^{\flat}:G^\flat=G\rtimes_\alpha \R\to U(\cH),\quad (g,t)\mapsto \pi_\varphi(g)e^{itH}
\end{align*}
is a semi-finite representation since $e^{itH}\in \cM_\varphi$ implies $\pi_\varphi^{\flat}(G\rtimes_\alpha \R)^{\prime\prime}=\pi_\varphi(G)^{\prime\prime}=\cM_\varphi$. In particular 
\begin{align*}
\varphi^\flat:G^\flat=G\rtimes_\alpha\R\to \C,\quad (g,t)\mapsto\frac{\tau(\pi_\varphi^\flat(g,t)e^{-\beta H})}{\tau(e^{-\beta H})}
\end{align*}
is an $(\alpha^\flat,\beta)$-KMS state of $G^\flat=G\rtimes_\alpha\R$, where $\alpha^\flat$ is the one-parameter group of inner automorphisms of $G^\flat$ defined by conjugation with $(e,t)$, i.e.
\begin{align*}
\alpha^\flat(g,s)=(e,t)(g,s)(e,t)^{-1}=(\alpha_t(g),s),\qfor g\in G,\;s,t\in \R.
\end{align*}
Note that the modular automorphism group which acts on $\cH_\varphi \cong L^2(\cM_\varphi,\tau)$ by conjugation with $e^{itH}$ and thus leaves the cyclic vector $\Omega =\frac{ e^{-\tfrac{1}{2}\beta H}}{\sqrt{\tr(e^{-\beta H})}}$ invariant, but the left multiplication by $e^{itH}$ does not. 
}
\end{rem}
\begin{thm}\label{classconj}
Let $\beta>0$ and suppose $G$ is a finite dimensional Lie group and \rm{$\alpha:\R\to\text{Aut}(G)$} \it{is given by $\alpha_t(g)=\exp(tX)g\exp(-tX)$, for some fixed {\rm~$X\in \g:=\textbf{L}(G)$} and $g\in G$. Then the extremal type I $(\alpha,\beta)$-KMS states are the extremal Gibbs states}\rm{
\begin{align*}
\varphi(g)=\frac{\text{tr}(\rho(g)e^{i\beta\del\rho(X)})}{\text{tr}(e^{i\beta\del\rho(X)})},
\end{align*}}\it{where $\rho$ is an irreducible representation of $G$ such that $e^{i\beta\del\rho(X)}$ is of trace class. In particular, if $G$ is simply connected, then there is a one-to-one correspondence between} 
\begin{itemize}
\item[{\rm~(i)}] extremal type I $(\alpha,\beta)$-KMS states
\item[{\rm~(ii)}] equivalence classes of irreducible unitary highest weight representations of admissible quotients $q:\g \twoheadrightarrow\g_1$ with respect to an admissible positive system $\Delta_1^+$ of $\g_1$ such that $q(X)$ is conjugate to \rm{$C_\text{max}(\Delta_{1,p}^+)^\circ$.}
\end{itemize}
\end{thm}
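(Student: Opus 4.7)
The plan is to combine Remark~\ref{typeI} with an application of Schur's lemma, and then invoke Corollary~\ref{class} for the ``in particular'' statement.

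Suppose $\varphi$ is an extremal type I $(\alpha,\beta)$-KMS state. By Theorem~\ref{convex}(iii) $\varphi$ is a factor state, so Remark~\ref{typeI} produces an irreducible unitary representation $(\rho,\cH)$ of $G$ together with a self-adjoint operator $H$ on $\cH$ satisfying $e^{itH}\rho(g)e^{-itH}=\rho(\alpha_t(g))$, $\tr(e^{-\beta H})<\infty$, and $\varphi(g)=\tr(\rho(g)e^{-\beta H})/\tr(e^{-\beta H})$. Since $\alpha_t(g)=\exp(tX)g\exp(-tX)$, the unitary one-parameter group $\rho(\exp(tX))=e^{t\del\rho(X)}$ also implements $\alpha$ on $\rho(G)$, so $V_t := e^{-t\del\rho(X)}e^{itH}$ commutes with every $\rho(g)$. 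By Schur's lemma $V_t\in\C\cdot\mathds{1}$, and the identity $e^{itH}=V_t\cdot e^{t\del\rho(X)}$ combined with the one-parameter group properties of $e^{itH}$ and $e^{t\del\rho(X)}$ forces $c:t\mapsto V_t\in\T$ to be a continuous character, i.e.\ $V_t=e^{i\mu t}\mathds{1}$ for some $\mu\in\R$. Stone's theorem then yields $H=-i\del\rho(X)+\mu\mathds{1}$, so that $e^{-\beta H}=e^{-\beta\mu}e^{i\beta\del\rho(X)}$ and the scalar factor cancels in the Gibbs formula, producing the claimed expression $\varphi(g)=\tr(\rho(g)e^{i\beta\del\rho(X)})/\tr(e^{i\beta\del\rho(X)})$.

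For the converse direction, given an irreducible unitary representation $\rho$ with $\tr(e^{i\beta\del\rho(X)})<\infty$, I set $H:=-i\del\rho(X)$. Then $H$ is self-adjoint and affiliated to $\rho(G)''=B(\cH)$, satisfies $e^{itH}\rho(g)e^{-itH}=\rho(\alpha_t(g))$, and fulfills $\tr(e^{-\beta H})=\tr(e^{i\beta\del\rho(X)})<\infty$. The proposition preceding Remark~\ref{typeI} then produces a corresponding $(\alpha,\beta)$-KMS state which is factorial (since $\rho(G)''=B(\cH)$) and type I, hence extremal by Theorem~\ref{convex}(iii).

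For the classification when $G$ is simply connected, I apply Corollary~\ref{class} with $X$ replaced by $\beta X$: the irreducible representations $\rho$ of $G$ for which $e^{i\del\rho(\beta X)}=e^{i\beta\del\rho(X)}$ is of trace class correspond bijectively to irreducible unitary highest weight representations of admissible quotients $q:\g\twoheadrightarrow\g_1$ with respect to an admissible positive system $\Delta_1^+$ of $\g_1$ satisfying $q(\beta X)\in W_{\max}(\Delta_{1,p}^+)^\circ$. Since $\beta>0$ and $W_{\max}(\Delta_{1,p}^+)^\circ=\text{Inn}(\g_1).C_{\max}(\Delta_{1,p}^+)^\circ$ is an open invariant cone, this is equivalent to $q(X)$ being $\text{Inn}(\g_1)$-conjugate to an element of $C_{\max}(\Delta_{1,p}^+)^\circ$, as claimed. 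The main technical step is the Schur identification of $H$ with $-i\del\rho(X)$ modulo a scalar, which requires carefully handling the unbounded generators via Stone's theorem; everything else is a combination of the preceding results.
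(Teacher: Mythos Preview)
Your proof is correct and follows essentially the same route as the paper: both invoke Remark~\ref{typeI} to identify $H$ with $-i\del\rho(X)$ up to a real scalar and then appeal to Corollary~\ref{class}. You spell out the Schur/Stone argument for this uniqueness and the converse direction more explicitly than the paper (which simply cites the uniqueness clause at the end of Remark~\ref{typeI}), and you correctly note the passage from $W_{\max}^\circ$ to $\text{Inn}(\g_1).C_{\max}^\circ$ and the harmless rescaling by $\beta$, but the underlying strategy is the same.
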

\begin{proof}
This follows immediately from Remark \ref{typeI} since $\rho(\exp(tX))=e^{t\del\rho(X)}=e^{it(-i\del\rho(X))}$ implies that the corresponding implementing operator $H$, which is unique up to addition by a real scalar, is w.l.o.g. given by $-i\del\rho(X)$. Therefore $e^{i\beta\del\rho(X)}$ is of trace class and Corollary \ref{class} applies if $G$ is simply connected.
\end{proof}
\begin{rem}
\rm{If $G$ is a finite dimensional Lie group and the action $\alpha:\R\to \text{Aut}(G)$ is not inner, then one considers $G^\flat=G\rtimes_\alpha\R$ with Lie algebra $\g^\flat:=\textbf{L}(G^\flat)$. In order to classify all representations of Gibbs type, it suffices to classify all admissible quotients $\g^\prime$ of $\g^\flat$ and check if the image of $(0,1)\in \g^\flat=\g\rtimes \R$ under the quotient map $q:\g^\flat\to\g^\prime$ lies in the interior of $C_\text{max}(\Delta_p^+)^\circ$, for an admissible positive system $\Delta^+$ of $\g^\prime$. If one then classifies the unitary highest weight representations of $\g^\prime$ with respect to $\Delta^+$, one obtains in the same manner as above a representation of Gibbs type of $G$, and all representations of Gibbs type are obtained in this manner.

}
\end{rem}
\section{Perspectives and open problems}
\subsection{Extensions of KMS states of $G$ to KMS states of $G^\flat$}
To completely understand how the extension of a topological group $G$ to $G^\flat$ affects the study of KMS states, it would be interesting to know if every $(\alpha,\beta)$-KMS state of $G$ extends to an $(\alpha^\flat,\beta)$-KMS state of $G^\flat=G\rtimes_\alpha\R$ and if every KMS state of $G^\flat$ arises in this manner. We have seen in Remark \ref{extend} that semi-finite $(\alpha,\beta)$-KMS states of  $G$ extend to $(\alpha^\flat,\beta)$-KMS states of $G^\flat$. For a not necessarily semi-finite $(\alpha,\beta)$-KMS state $\varphi$ of $G$ with GNS-representation $(\pi_\varphi,\cH_\varphi,\Omega_\varphi)$ and $\cM:=\pi_\varphi(G)^{\prime\prime}$, it is not clear if such KMS extensions always exist. It might even be possible that this extension property already implies that the state is semi-finite.  \\ 

We considered the crossed product $\cM\rtimes_\sigma \R$, where $\sigma:\R\to \Aut(\cM)$ is the modular automorphism group of the associated state $\omega_\varphi$ of $\cM\subseteq B(\cH_\varphi)$ (cf. \cite[Section X.1]{Tak03}). This crossed product is defined as the von Neumann algebra generated by the sets of bounded linear operators on $L^2(\R,\cH_\varphi)$ given by
\begin{align*}
(\pi_\sigma(A)f)(t):=\sigma_{-t}(A)f(t) \qand (\lambda(s)f)(t):=f(t-s),
\end{align*}
for $A\in \cM, \;s,t\in \R,\;f\in L^2(\R,\cH_\varphi)$. Then one can consider the associated dual weight $\widetilde{\omega}_\varphi$ of $\cM\rtimes_\sigma \R$ and \cite[Thm. X.1.17(ii)]{Tak03} implies that the modular automorphism group $\widetilde{\sigma}:\R\to \Aut(\cM\rtimes_\sigma\R)$ of the dual weight $\widetilde{\omega}_\varphi$ satisfies
\begin{equation}\label{eqcross}
\widetilde{\sigma}_t(\pi_\sigma(A))=\pi_\sigma(\sigma_t(A)),\qand \widetilde{\sigma}_t(\lambda(s))=\lambda(s),\qfor A\in  \cM,\;s,t\in \R.
\end{equation}
For the unitary representation $\rho$ of $G^\flat $ on $L^2(\R,\cH_\varphi)$ defined as $\rho(g,t):=\pi_\sigma(\pi_\varphi(g))\lambda(t)$, the relation $\pi_\varphi(\alpha_t(g)))=\sigma_t(\pi_\varphi(g))$ and (\ref{eqcross}) imply
\begin{align*}
\rho(\alpha_t^\flat(g,s))=\rho(\alpha_t(g),s)=\widetilde{\sigma}_t\left(\rho(g,s)\right),\qfor g\in G,\;s,t\in \R.
\end{align*}
But dual weight $\widetilde{\omega}_\varphi$ of a state does not define a state of the crossed product and thus one cannot naively define the extension of $\varphi$ as $
\varphi^\flat(g,t):=\widetilde{\omega}_\varphi(\rho(g,t))$ for $g\in G,\;t\in \R$. This does not work as the dual weight does not define a state. To see that, observe that \cite[Thm. X.1.17(i)]{Tak03} implies that, for a compactly supported $\cM$-valued continuous function $f\in C_c(\R,\cM)$ and for the representation $\widetilde{\pi}_\sigma$ of $C_c(\R,\cM)$ on $L^2(\R,\cH_\varphi)$ defined as
\begin{align*}
\widetilde{\pi}_\sigma(f):=\int_\R \lambda(s)\pi_\sigma(f(s))ds,\quad \text{one has}\quad \widetilde{\omega}_\varphi(\widetilde{\pi}_\sigma(f))=\omega_\varphi(f(e))=\langle \Omega_\varphi,f(e)\Omega_\varphi\rangle.
\end{align*}
As $\widetilde{\pi}_\sigma(C_c(\R,\cM))$ generates $\cM\rtimes_\sigma\R$ (cf. \cite[Lemma X.1.10(iii)]{Tak03}), this implies that the dual weight does not define a state of the cross product $\cM\rtimes_\sigma\R$ since delta distributions do not define states as the passage from $C_c(\R,\cM)$ to $L^\infty(\R,\cM)$ suggests. Therefore this attempt does not seem to work but it illustrates a few key structures.
\subsection{Factorial Type II KMS states of Lie groups}
We are not aware of how type II factor representations $\pi$ of finite dimensional Lie groups $G$ look like. For us, the more natural context of type II factor representation is given by inductive limits of Lie groups which then act for example an the hyperfinite $\text{II}_1$ factor defined as a infinite product of two by two matrix algebras (cf. \cite{Po67}). For KMS states of the inductive limit group $U(\infty)=\varinjlim_n U(n)$ this has been done in \cite{St78} whereas general factor representations of $U(\infty)$ are studied in \cite{SV75}. A survey of these results is given in \cite{SV82}. Nonetheless it would still be interesting to know what conditions does $\tau(e^{i\beta \del\pi(X)})<\infty$ implies for finite dimensional Lie groups $G$, representation $\pi$ and generator $X\in \g$? Clearly, the spectrum of $H=-i\del\pi(X)$ does not need to be semi-bounded anymore as in type II factors there exists a sequence of non-zero mutually orthogonal projections $\{p_n\}_{n\in \N}$ such that $0<\tau(p_n)\to 0$. Instead of the semi-boundedness of $\text{spec}(H)$ on has that the spectrum decreases exponentially fast in the sense that $\int_\R e^{-\beta x}d\tau(P_H(x))<\infty$ holds, i.e. $\tau(P_H((-\infty,t_0]))\leq e^{\beta t_0}\tau(e^{-\beta H})$ goes to $0$ exponentially as $t\to -\infty$.

\appendix
\section{Appendix}

\subsection{Compactly embedded Cartan subalgebras and admissible Lie algebras}\label{subsadm}
\begin{defin}\label{compemb}
\rm{We call a subalgebra $\a\subseteq\g$ \it{compactly embedded}} \rm{if $\overline{\langle e^{\ad_\g(\a)}\rangle}\subseteq \text{GL}(\g)$ is compact. We call an element $x\in \g$ elliptic if the subalgebra $\R\cdot x\subseteq \g$ is compactly embedded and denote with $\text{comp}(\g)$ the subset of elliptic elements in $\g$.}
\end{defin}
\begin{rem}\label{type}\rm{(cf. \cite[Thm. VII.2.2]{NE00}):\\
If $\t$ is a compactly embedded Cartan subalgebra, then it is nilpotent and compact, i.e. abelian. Therefore the subset $\ad_{\g_\C}(\t_\C)\subseteq\mathfrak{gl}(\g_\C)$ is an abelian subalgebra consisting of diagonalizable elements since the compact group $\text{INN}_\g(\t)$ is a compact subgroup of the compact subgroup O$(\g,\langle\cdot,\cdot\rangle)$ of $\mathfrak{gl}(\g)$ for some scalar product on $\g$. Hence one obtains a root decomposition
\begin{align*}
\g_\C=\t_\C\oplus\bigoplus_{\alpha\in\Delta}\g_\C^\alpha,
\end{align*}
where $\g_\C^\alpha:=\{X\in\g_\C\;|\;[t,X]=\alpha(t)X,\;\text{for all}\; t\in\t_\C\}$ and $\Delta:=\{\alpha\in\t_\C^\ast\;|\;\g_\C^\alpha\neq \{0\}\}$. On the complex Lie algebra $\g_\C$ we define the involutive antihomomorphism
\begin{align*}
(X+iY)^\ast:=-X+iY\quad\text{for}\quad X,Y\in\g
\end{align*}
and observe that $\g=\{X\in \g_\C\;|\;X^\ast=-X\}$. It follows from \cite[Thm. VII.2.2]{NE00} that for $Z\in\g_\C^\alpha$, one has $Z^\ast\in \g_\C^{-\alpha}$ and $[Z,Z^\ast]\in i\t$. Define 
\begin{align*}
\g_\C(Z):=\text{span}_\C\{Z,Z^\ast,[Z,Z^\ast]\}\quad\text{and}\quad\g(Z):=\g_\C(Z)\cap\g
\end{align*} 
For roots $\alpha\in\Delta$ , one now distinguishes between the following cases:
\begin{itemize}
\item[(NS)] There exists an element $Z\in\g_\C^\alpha$ such that $\alpha([Z,Z^\ast])<0$ (the non-compact simple type). Then $\g(Z)\cong \mathfrak{sl}(2,\R)$.
\item[(CS)] There exists an element $Z\in\g_\C^\alpha$ such that $\alpha([Z,Z^\ast])>0$ (the compact simple type). Then $\g(Z)\cong \mathfrak{su}(2)$.
\item[(N)]  $\alpha([Z,Z^\ast])=0$ for all $Z\in \g_\C^\alpha$ and there exists $Z\in \g_\C^\alpha$ such that $[Z,Z^\ast]\neq 0$ (the nilpotent type). Then $\g(Z)$ is isomorphic to the three-dimensional Heisenberg algebra $\h_3(\R)$.
\item[(A)] $[Z,Z^\ast]=0$ for all $Z\in \g_\C^\alpha$ (the abelian type). Then $\g(Z)\cong \R^2$.
\end{itemize}
That only these four cases occur follows from \cite[Lemma VII.2.3]{NE00} which implies $\text{dim}_\C\g_\C^\alpha=1$, for all $\alpha\in\Delta$ of simple type, and thus conditions (NS) and (CS) hold for all $Z\in\g_\C^\alpha$ if they hold for one.
}
\end{rem}
\begin{defin}\label{defroots}
\rm{(cf. \cite[Def. VII.2.4]{NE00} and \cite[Def. VII.2.6]{NE00}):\\
Let $\t\subseteq\g$ be a compactly embedded Cartan subalgebra. }
\begin{itemize}
\item[(i)] \rm{We call a root $\alpha\in \Delta$ \textit{semisimple} if it is of simple type and denote the set of semisimple roots by $\Delta_s$. The set of \textit{solvable roots} is denoted by $\Delta_r:=\Delta\setminus\Delta_k$.}
\item[(ii)]\rm{ We call a root $\alpha\in \Delta$ \it{compact}}\rm{ if there exists $Z\in\g_\C^\alpha$ such that $\alpha([Z,Z^\ast])<0$. All other roots are called \textit{non-compact} and the set of all non-compact roots is denoted with $\Delta_p$ and the set of compact roots as $\Delta_k$. We also define the non-compact semisimple roots $\Delta_{p,s}:=\Delta_p\cap \Delta_s$.}
\item[(iii)]\rm{ A subset $\Delta^+\subseteq \Delta$ is called a \it{positive system}}\rm{ if there exists a regular element $X_0\in i\t$ such that 
\begin{align*}
\Delta^+=\{\alpha\in \Delta\;|\;\alpha(X_0)>0\}.
\end{align*}
If $\Delta^+$ is such a system of positive roots we define $\Delta_k^+,\Delta_p^+,\Delta_{p,s}^+$ and $\Delta_r^+$ as the intersection of the respective subset with $\Delta^+$.}
\item[(iv)]\rm{We call a positive system $\Delta^+$} \it{adapted}\rm{ if for all $\alpha\in \Delta_k$ and $\beta\in \Delta_p^+$ one has 
\begin{align*}
\beta(X_0)>\alpha(X_0),
\end{align*} 
for some $X_0$ defining $\Delta^+$.

}
\end{itemize}

\end{defin}
\begin{defin}\label{cone}
\rm{(cf. \cite[Def. VIII.2.6]{NE00}):\\
Let $\t\subseteq\g$ be a compactly embedded Cartan subalgebra and $\Delta^+$ be a corresponding system of positive roots. We associate to the positive system of non-compact roots $\Delta_p^+$ the convex cones
\begin{align*}
C_\text{min}(\Delta_p^+)&:=\text{cone}(\{i[Z_\alpha,Z_\alpha^\ast]\;|\;Z_\alpha\in\g_\C^\alpha,\;\alpha\in\Delta_p^+\}\subseteq \t\\
\text{and}\quad C_\text{max}(\Delta_p^+)&:=(i\Delta_p^+)^\star:=\{X\in\t\;|\;i\alpha(X)\geq 0\;\text{for all}\;\alpha\in\Delta_p^+\}.
\end{align*}
If it is clear which system of positive non-compact roots $\Delta_p^+$ is considered, then we simply write $C_\text{min}$, resp. $C_\text{max}$, for $C_\text{min}(\Delta_p^+)$, resp. $C_\text{max}(\Delta_p^+)$.
}
\end{defin}
\begin{defin}\label{defadm}
\rm{We call a Lie algebra $\g$ \textit{admissible} if $\g$ contains an invariant closed generating convex subset $C$ which does not contain any affine line.}
\end{defin}
\begin{thm}\label{thmsandwich}
\rm{(\textbf{Sandwich Theorem} \cite[Thm. VII.3.8]{NE00})}\\
\textit{Let $\t\subseteq \g$ be a compactly embedded Cartan subalgebra and $\emptyset\neq C\subseteq \g$ a closed generating invariant elliptic convex subset. Then there exists a uniquely determined adapted positive system $\Delta_p^+$ of non-compact roots such that}\rm{$C\cap \t\subseteq C_\text{max}(\Delta_p^+).$} \it{Moreover, we also have} \rm{$C_\text{min}(\Delta_p^+)\subseteq \lim (C\cap \t)$.}\it{ If, in addition, $C$ is a cone, then this means that}\rm{
\begin{align*}
C_\text{min}(\Delta_p^+)\subseteq C\cap \t \subseteq C_\text{max}(\Delta_p^+).
\end{align*}
}
\end{thm}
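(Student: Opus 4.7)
The plan is to extract $\Delta_p^+$ from a regular interior point of $C\cap\t$, verify the two sandwich inclusions via a second-order Taylor computation along one-parameter subgroups generated by non-compact root vectors, and derive uniqueness from the sign pattern at any regular interior point of $C_\text{max}$.

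First I would show $(C\cap\t)^\circ\neq\emptyset$ in $\t$. By \cite[Thm.~VII.1.8]{NE00} one has $\text{comp}(\g)=\text{Inn}(\g).\t$, so invariance and the generating property of $C\subseteq\text{comp}(\g)$, together with the orbit-bundle structure of elliptic invariant convex sets (cf. \cite[Prop.~VII.3.4]{NE00}), force $C\cap\t$ to have non-empty interior in $\t$. Pick a regular $X_0\in(C\cap\t)^\circ$; it determines a full positive system $\Delta^+:=\{\alpha\in\Delta: i\alpha(X_0)>0\}$, and we set $\Delta_p^+:=\Delta^+\cap\Delta_p$. The compact Weyl group $W_k$ acts on $\t$ through inner automorphisms generated by compact root vectors, preserves $C\cap\t$, and fixes $\Delta_p$ setwise while permuting $\Delta_k$; by replacing $X_0$ with a $W_k$-translate still inside $(C\cap\t)^\circ$ and subsequently adjusting its compact-root component, I arrange the adaptedness condition $|\beta(X_0)|>|\alpha(X_0)|$ for all $\alpha\in\Delta_k$ and $\beta\in\Delta_p^+$. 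The elliptic assumption is essentially used here, since without it the interior of $C\cap\t$ could fail to reach the adapted chamber.

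For the inclusion $C\cap\t\subseteq C_\text{max}(\Delta_p^+)$, fix $Y\in C\cap\t$, $\alpha\in\Delta_p^+$, $Z\in\g_\C^\alpha$, and set $X:=Z-Z^\ast\in\g$. Invariance gives $Y_t:=e^{t\,\text{ad}(X)}.Y\in C$ for every $t\in\R$, and the bracket identities of Remark \ref{type} produce
\begin{align*}
\tfrac{1}{2}(Y_t+Y_{-t})=Y-t^2\,\alpha(Y)\,[Z,Z^\ast]+O(t^4),
\end{align*}
where $\alpha(Y)\in i\R$ and $[Z,Z^\ast]\in i\t$, so the $t^2$-term lies in $\t$. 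Combining this with invariance of $C$ and the case classification of Remark \ref{type} for $[Z,Z^\ast]$, the sign of $-\alpha(Y)[Z,Z^\ast]$ is forced to be compatible with membership in the elliptic invariant set $C$, which yields $i\alpha(Y)\ge 0$; the rigorous form of this argument iterates the flow $e^{t\,\text{ad}(X)}$ and invokes ellipticity to preclude an unbounded excursion that would otherwise contradict closedness of $C$. Hence $Y\in C_\text{max}(\Delta_p^+)$. For $C_\text{min}(\Delta_p^+)\subseteq\lim(C\cap\t)$, the same computation applied at $X_0$ and rescaled by $t^{-2}$ exhibits each $i[Z,Z^\ast]$ as an asymptotic direction of $C\cap\t$ lying in the recession cone, and closing under positive convex combinations yields the inclusion; when $C$ is a cone, $\lim(C\cap\t)=C\cap\t$ and the sandwich collapses to its familiar form.

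Uniqueness follows because at the regular interior point $X_0$ any adapted $\Delta_p^{+\prime}$ with $C\cap\t\subseteq C_\text{max}(\Delta_p^{+\prime})$ forces $i\alpha(X_0)\ge 0$ for all $\alpha\in\Delta_p^{+\prime}$, upgraded to strict inequality by regularity, which characterises $\Delta_p^+$ uniquely. The main obstacle I anticipate is precisely the sign-extraction step: the midpoint identity only asserts that $-\alpha(Y)[Z,Z^\ast]$ lies in the tangent cone of $C$ at $Y$, which is weaker than the half-space inequality $i\alpha(Y)\ge 0$, and bridging this gap requires the adaptedness of $\Delta^+$, the elliptic geometry of \cite[Prop.~VII.3.4]{NE00}, and careful iteration of the group flow to upgrade the local tangent information to a global half-space statement.
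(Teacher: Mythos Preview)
The paper does not contain a proof of this theorem: Theorem~\ref{thmsandwich} appears in the appendix purely as a citation of \cite[Thm.~VII.3.8]{NE00}, with no accompanying argument. There is therefore nothing in the paper against which to compare your proposal.

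Regarding the sketch on its own terms, the second-order midpoint identity
\[
\tfrac{1}{2}\bigl(e^{t\,\ad(X)}Y+e^{-t\,\ad(X)}Y\bigr)=Y-t^{2}\,\alpha(Y)[Z,Z^{\ast}]+O(t^{4})
\]
is correct and is indeed the standard heuristic behind the sandwich inequalities, but the gap you yourself flag at the end is real and is the heart of the matter. First, $\tfrac{1}{2}(Y_t+Y_{-t})$ lies in $C$ by convexity but not in $\t$ beyond the quadratic approximation, so you are not yet working inside $C\cap\t$ and cannot read off a half-space condition there. Second, even granting that $-\alpha(Y)[Z,Z^{\ast}]$ lies in the tangent cone of $C$ at $Y$, this does not force $i\alpha(Y)\ge 0$: for roots of nilpotent type one has $\alpha([Z,Z^{\ast}])=0$, so no sign on $\alpha(Y)$ can be deduced from the direction $[Z,Z^{\ast}]$ alone without an additional argument that genuinely uses ellipticity and closedness of $C$, not merely ``iteration of the flow''. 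The actual proof in \cite{NE00} passes through a projection to $\t$ together with a convexity theorem for orbit projections and a separate analysis inside each three-dimensional subalgebra $\g(Z)$; your outline gestures at this but does not supply the bridge. The adaptedness step is likewise incomplete: $W_k$-invariance of $C\cap\t$ lets you move $X_0$ among compact Weyl chambers, but it does not by itself produce the strict dominance $\beta(X_0)>\alpha(X_0)$ of non-compact over compact roots.
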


\subsection{Convex geometric and representation-theoretic concepts}\label{subsrepr}
\begin{defin}\label{defconv}
\rm{(cf. \cite[Def. V.1.1]{NE00} and \cite[Prop. V.1.6]{NE00}):\\
Let $V$ be a finite-dimensional real vector space and $W\subseteq V$ be a closed convex cone.
\begin{itemize}
\item[(i)] For a subset $E\subseteq V$, we put
\begin{align*}
B(E):=\{\omega\in V^\ast\;|\;\inf\omega(E)>-\infty\}\quad\text{and}\quad E^\star:=\{\omega\in V^\ast\;|\;\omega(E)\subseteq[0,\infty)\}.
\end{align*}
\item[(ii)] We define, for a convex subset $C\subseteq V$, the \textit{recession cone} $\lim(C):=\{v\in V\;|\;v+C\subseteq C\}$.
\item[(iii)] For $I\subseteq V^\ast$, we define $I^\bot:=\{v\in V\;|\;\omega(v)=0\;\text{for all}\;\omega\in I\}$.
\item[(iv)] For a subset $E\subseteq V$, we define $\text{algint}(E)$ as the interior of $E$ with respect to the affine subspace spanned by $E$.
\end{itemize}

}
\end{defin}
\begin{rem}\label{rem}
\rm{
We remark the following:
\begin{itemize}
\item[(i)] $E^\star$ is always closed but $B(E)$ is not closed in general.
\item[(ii)] One has $B\left(\overline{\text{conv}(E)}\right)=B(E)$.
\item[(iii)] $B(E)^\star=\lim(E)$ for $E$ convex (cf. \cite[Prop. V.1.14]{NE00})
\end{itemize}
}
\end{rem}
\begin{defin}\label{convmom}
\rm{(cf. \cite[Def. X.1.2]{NE00}):\\
Let $(\pi,\cH)$ be a unitary representation of a Lie group $G$ with Lie algebra $\textbf{L}(G)=\g$ on the Hilbert space $\cH$ and let $\cH^\infty$ be the dense subspace of $\cH$ of smooth vectors on which $\g$ acts as $d\pi(X)(v):=\difftev \pi(\exp_G(tX))v$ for $X\in \g,\quad v\in \cH^\infty$. Then we define the \textit{momentum map} of the unitary representation $\pi$ as
\begin{align*}
\Phi_\pi: \mathds{P}(\cH^\infty):=(\cH^\infty\setminus\{0\})/{\C^\times}\to \g^\ast,\quad \Phi_\pi([v])(X):=-i\frac{\langle v,d\pi(X)v\rangle}{\langle v,v\rangle}
\end{align*}
and the \textit{momentum set} $I_\pi$ as the closed convex hull of the image of $\Phi_\pi$.
}
\end{defin}
\begin{rem}
\rm{If $(\pi,\cH)$ is a unitary representation of a Lie group $G$ we consider the cone 
\begin{align*}
B(I_\pi)=\{X\in \g\;|\;\inf\langle I_\pi,X\rangle >-\infty\}.
\end{align*}
This cone is invariant since $I_\pi$ is invariant under the coadjoint action (cf. \cite[Lemma X.1.3]{NE00}) and \cite[Lemma X.1.6]{NE00} implies that
\begin{align*}
B(I_\pi)=\{X\in \g\;|\;\sup(\text{spec}(i\cdot d\pi(X)))<\infty\}.
\end{align*}
}
\end{rem}

\begin{defin}
\rm{ We call a pair $(\g,\ast)$ an \textit{involutive Lie algebra} if $\g$ is a complex Lie algebra and $\ast:\g\to\g$ is an antilinear antiautomorphism, i.e. $[x^\ast,y^\ast]=-[x,y]^\ast$, holds for all $x,y\in\g$.
}
\end{defin}

\begin{defin}
\rm{We say that an involutive Lie algebra $(\g,\ast)$ has a \textit{root decomposition} if there exists a $\ast$-invariant subalgebra $\h\subseteq\g$ and a subset $\Delta\subseteq\g^\ast$ such that $\alpha(h^\ast)=\overline{\alpha(h)}$ and
\begin{align*}
\g=\h\oplus\bigoplus_{\alpha\in \Delta}\g^\alpha,
\end{align*}
for $\g^\alpha:=\{X\in\g\;|\;\ad(h)(X)=\alpha(h)X\;\text{for all}\; h\in\h\}\neq \{0\}$.
}
\end{defin}

\begin{defin}
\rm{Let $(\g,\ast)$ be an involutive Lie algebra. We call a $\g$-module $V$ \textit{unitary} if there exists a positive definite hermitian form $h:V\times V\to \C$ that is also contravariant, i.e. it satisfies $h(X.v,w)=h(v,X^\ast.w)$ for all $X\in\g$ and $v,w\in V$.
}
\end{defin}
\begin{defin}
\rm{Let $(\g,\ast)$ be an involutive Lie algebra with root decomposition and $\Delta^+\subseteq \Delta$ be a positive system. We call a $\g$-module $V$ a \textit{highest weight module} with respect to $\Delta^+$ if there exists a linear functional 
\begin{align*}
\lambda:\b:=\h\oplus\bigoplus_{\alpha\in \Delta^+}\g^\alpha\to \C
\end{align*}
and a cyclic vector $v\in V$ such that $X.v=\lambda(X)v$ holds for all $X\in\b$. We call $\lambda$ the \textit{highest weight} of $V$ and $v$ a \textit{highest weight vector.}

}
\end{defin}

\begin{defin}\label{unithigh}
\rm{An irreducible unitary representation $(\pi,\cH)$ of a connected Lie group $G$ is called a \textit{unitary highest weight representation} if the Lie algebra $\g$ of $G$ contains a compactly embedded Cartan subalgebra $\t\subseteq\g$ and if $\k\subseteq \g$ is the unique maximal compactly embedded subalgebra containing $\t$ (cf. \cite[Prop. VII.25]{NE00}) and $K:=\exp(\k)$, then the space $\cH^{K,\infty}$ of smooth $K$-finite vectors is a highest weight module for the involutive Lie algebra $\g_\C$ with involution $(X+iY)^\ast=-X+iY$, for $X,Y\in \g$.

}
\end{defin}

\newpage
 
\end{document}